\g@addto@macro\normalsize{%
  \setlength\abovedisplayskip{8pt plus 3pt minus 3pt}
  \setlength\belowdisplayskip{8pt plus 3pt minus 3pt}
  \setlength\abovedisplayshortskip{6pt plus 3pt minus 2pt}
  \setlength\belowdisplayshortskip{6pt plus 3pt minus 2pt}
}
\date{\today}
\numberwithin{equation}{section}
\def\({\bigl(}
\def\){\bigr)}
\newtheorem{thm}{Theorem}[section]
\newtheorem{lemma}[thm]{Lemma}
\newtheorem{conj}[thm]{Conjecture}
 \newtheorem{claim}[thm]{Claim}
\theoremstyle{definition} 
\newtheorem{remark}[thm]{Remark}
\def\dfrac#1#2{\lower0.15ex\hbox{\large$\textstyle\frac{#1}{#2}$}}
\def\({\bigl(}
\def\){\bigr)}
\def\st{\,:\,}
\def\N{\mathbb{N}}
\let\eps=\varepsilon
\def\b{\boldsymbol{b}}
\def\calS{\mathcal{S}}
\def\1{\mathbbm{1}}
\def\X{\boldsymbol{X}}
\def\Y{\boldsymbol{Y}}
\def\x{\boldsymbol{x}}
\def\y{\boldsymbol{y}}
\def\n{\boldsymbol{n}}
\def\m{\boldsymbol{m}}
\def\u{\boldsymbol{u}}
\def\calF{\mathcal{F}}
\def\calG{\mathcal{G}}
\def\calS{\mathcal{S}}
\def\xvec{\boldsymbol{x}}
\def\yvec{\boldsymbol{y}}
\def\zvec{\boldsymbol{z}}
\def\uvec{\boldsymbol{u}}
\def\l{\boldsymbol{\ell}}
\def\mad{\operatorname{mad}}
\def\E{\operatorname{\mathbb{E}}}
\def\Pr{\operatorname{Pr}}
\def\Reals{{\mathbb{R}}}
\def\pReals{\Reals_+}
\def\Naturals{{\mathbb{N}}}
\def\nb{k} 
\def\I{\mathcal{I}} 
\def \M{\mathcal{G}} 
\def \SBM{\mathcal{G}(\n,P)} 
\def \G{\boldsymbol{G}} 
\begin{document}

\title{On the chromatic number  in  the stochastic block model}
\author{
Mikhail Isaev\thanks{Supported by Australian Research Council Discovery Project DP190100977
and by Australian Research Council Discovery Early Career Researcher Award DE200101045}\\
\small School of Mathematics\\[-0.8ex]
\small Monash University\\[-0.8ex]
\small 3800  Clayton,   Australia\\[-0.8ex]
\small\tt  mikhail.isaev@monash.edu    
\and
Mihyun~Kang\thanks{Supported by Austrian Science Fund (FWF): I3747}\\
\small Institute of Discrete Mathematics\\[-0.8ex]
\small Graz University of Technology\\[-0.8ex]
\small  8010  Graz, Austria\\[-0.8ex]
\small\tt  kang@math.tugraz.at
 }
\date{}


\maketitle

\begin{abstract} 
We prove a generalisation of Bollob\'as' classical result 
on the asymptotics of the chromatic number of the binomial random graph 
to the stochastic block model. In addition, by allowing  the number of blocks  to grow,
we determine  the chromatic number in the Chung-Lu model. Our approach is based on the estimates for the  weighted independence number, where weights are specifically designed  to  encapsulate inhomogeneities of the random graph.
 \end{abstract}

\section{Introduction}\label{S:intro}
 
The chromatic number  $\chi(G)$ of a graph $G$, denoted by $\chi(G)$,  is  the smallest number of colours needed for the assignment of colours to the vertices of $G$ so that no two adjacent vertices have the same colour. Understanding properties of the distribution of $\chi(\G)$ for random $\G$  is  one of the most prominent problems in the random graph theory since the seminal paper~\cite{ER60} by Erd{\H o}s and R\'enyi.

The binomial random graph $\G(n,p)$ is the most studied in the literature.  
Recall that  $\G(n,p)$ is  a graph on vertex set $[n]:=\{1,2,\ldots,n\}$  and each pair of distinct vertices is connected by an edge independently of 
  each other with probability $p$. 
A long line of research  led    
to  many breakthrough results on the asymptotic behaviour and concentration of 
 $\chi(\G(n,p))$; see~\cite{AchlioptasNaor2005, AlonKrivelevich1997, Bollobas1988, Bollobas2004, CPA2008, Heckel2018, Heckel2020, KrivelevichSudakov1998, Luczak1991, Luczak1991b, McDiarmid1990, Panagiotou2009, ShamirSpencer1987} --- this list is far from being exhaustive. In particular, it is well known that if   $p=p(n) \in [0,1]$ is such that $np\rightarrow \infty$ as $n \rightarrow \infty$  and $p\leq 1-\eps$ for some fixed $\eps>0$, then, whp (meaning {\em with probability tending to one}) as $n \rightarrow \infty$,
\begin{equation}\label{eq:Gnp}
	\chi(\G(n,p)) = (1+o(1))\frac{ n \log\left( \frac{1}{1-p} \right)}{ 2 \log (pn) }.
\end{equation}
Formally,   ``$\X(n) = (1+o(1)) \Y(n)$  holds whp as $n \rightarrow \infty$''   means that,  for any fixed
 $\epsilon>0$, the probability of the event that 
$ (1-\epsilon)\Y(n)\leq \X(n) \leq (1+\epsilon)\Y(n)$  tends to 1 as $n \rightarrow \infty$.
Throughout the paper we use  $\log$ to denote the natural logarithm.

Our  paper focuses on generalising formula \eqref{eq:Gnp} to a random graph
$\G$ from  \emph{the stochastic block model}, in which  all vertices are distributed between several different blocks and the probabilities of adjacencies of vertices depend only on the block they belong to; see Section~\ref{S:SBM} for  formal definitions.   
The chromatic number in this random graph model was recently studied by 
  Martinsson et al. \cite{MPSM2020}.  
  Under the condition that the number of blocks is fixed and all probabilities are constants from 
  $(0,1)$, namely, they are all independent of  the number of vertices $n$, Martinsson et al.  proved that,  whp as $n\rightarrow \infty$,
  \[
  	\chi(\G) = (1+o(1)) \frac{n}{ c^* \log n},
  \]
  where constant $c^*$ is  the solution of a certain convex optimisation problem,
which   depends only on the matrix of probabilities and the proportions for the distribution of  $n$  vertices between the blocks. 

 In this paper we  extend the above result by Martinsson et al. \cite{MPSM2020} in two directions:  
 \begin{enumerate}
 \item[(1)]  
the edge probabilities  can be functions of $n$ (in particular, vanishing or tending to $1$),    
\item[(2)]  the number of blocks  can grow as a function of $n$. 
\end{enumerate}
We defer  the exact statement of our main result (Theorem~\ref{Thm_sbm}) to Section~\ref{S:SBM} in order to obviate introducing the technical notations in the  introduction. In the rest of this section we discuss several consequences of Theorem~\ref{Thm_sbm}, which are interesting of its own.

\subsection{A very dense binomial random graph}\label{S:verydense}
The classical binomial random graph $\G(n,p)$ can be considered as a random graph from the stochasitic block model with a single block. Even in this case,   our main result (Theorem~\ref{Thm_sbm}) implies   new information on the chromatic number of a very dense binomial random graph when $p= p(n) \rightarrow 1$ as $n \rightarrow \infty$ which was not treated in the literature. Namely, as a straightforward application of Theorem~\ref{Thm_sbm}, we obtain the following result.
\begin{thm}\label{T:oneblock}
 If   {$p= p(n) \in [0,1]$ such that  $p \rightarrow 1$} and  $1-p = n^{o(1)}$, 
 then  \eqref{eq:Gnp} holds whp.
 \end{thm}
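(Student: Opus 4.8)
The plan is to obtain Theorem~\ref{T:oneblock} as the single-block special case of Theorem~\ref{Thm_sbm}: we regard $\G(n,p)$ as the stochastic block model in which all $n$ vertices lie in one block and the probability matrix is the $1\times 1$ matrix $(p)$. In this degenerate instance there are no block proportions to optimise over, and the weighted independence number that governs Theorem~\ref{Thm_sbm} collapses, with the forced uniform weights, to (a fixed multiple of) the ordinary independence number $\alpha(\G(n,p))$. So the argument has just two components: checking that the hypotheses of Theorem~\ref{Thm_sbm} are satisfied when $p=p(n)\to1$ and $1-p=n^{o(1)}$, and verifying that the general formula reduces to \eqref{eq:Gnp}.

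For the hypotheses: with a single block, any condition of Theorem~\ref{Thm_sbm} restricting the number or balance of blocks is vacuous, so what remains should be a growth condition on the typical degree and a condition keeping the edge/non-edge probabilities away from the extremes on the scale of~$n$. Both hold here. Since $p\to1$, the minimum expected degree is $(n-1)p=(1+o(1))n\to\infty$; and the assumption $1-p=n^{o(1)}$ is exactly the statement $\log\frac1{1-p}=o(\log n)$, i.e.\ that $1-p$ is polynomially bounded below, which is the natural dense-regime analogue of the sparse condition $np\to\infty$. Concretely, $1-p=n^{o(1)}$ forces the independence number $\alpha(\G(n,p))$ — which by the classical first-moment computation $\E\,\#\{\text{independent }r\text{-sets}\}=\binom nr(1-p)^{\binom r2}$ is whp $(1+o(1))\frac{2\log(pn)}{\log(1/(1-p))}$ — to tend to infinity, and this is precisely what makes the error terms in Theorem~\ref{Thm_sbm} $o(1)$ relative to the main term; were $1-p$ only polynomially small, $\alpha$ would be a bounded constant and the $(1+o(1))$-asymptotic would fail. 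One then simply verifies each hypothesis of Theorem~\ref{Thm_sbm} against these facts.

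Given this, the specialisation of the formula is immediate: Theorem~\ref{Thm_sbm} yields $\chi(\G(n,p))=(1+o(1))\,n/\alpha(\G(n,p))=(1+o(1))\frac{n\log(1/(1-p))}{2\log(pn)}$, which is \eqref{eq:Gnp}. (One small point worth recording when matching formulas: for $p\to1$ one has $\log(pn)=\log n+o(1)\sim\log n$, so in this regime the $\log(pn)$ in \eqref{eq:Gnp} may equally be read as $\log n$.) I expect the only real work to be bookkeeping — tracking how the boundary behaviour $p\to1$, and the possibly unbounded $\log\frac1{1-p}$, propagate through the error estimates of Theorem~\ref{Thm_sbm} near $p=1$, and confirming that $1-p=n^{o(1)}$ is the exact threshold for a $(1+o(1))$-asymptotic rather than merely an order-of-magnitude bound. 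There is no new probabilistic input: in particular the substantive upper bound $\chi\le(1+o(1))\,n/\alpha$, which classically requires a martingale concentration of $\alpha$ over all large vertex subsets together with a greedy peeling of near-maximum independent sets, is inherited directly from the colouring argument behind the main theorem.
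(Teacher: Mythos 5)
Your proposal is correct and follows the same route the paper does: the paper derives Theorem~\ref{T:oneblock} simply by applying Theorem~\ref{Thm_sbm} with $k=1$, $\n=(n)$, $P=(p)$, so that $\sigma=0$, $q^*=\hat q(\n)=\log\frac1{1-p}=o(\log n)$, and $w_*(\n,Q)=w(\n,Q)=nq^*$, giving $\chi=(1+o(1))\frac{n\log\frac1{1-p}}{2\log n}=(1+o(1))\frac{n\log\frac1{1-p}}{2\log(pn)}$. One small remark: the cleanest path is to compute $w_*(\n,Q)=nq^*$ directly from the definition (or Theorem~\ref{l:Qmatrix}(c)) rather than detouring through the classical asymptotics for $\alpha(\G(n,p))$, which is not part of the statement of Theorem~\ref{Thm_sbm}; your detour is harmless but introduces an extra fact that is not actually needed.
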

 
We believe that $n^{o(1)}$ in Theorem \ref{T:oneblock} can not be improved. For example, if 
$p_1= 1 - \frac{1}{n \log n}$ then whp $\G(n, p_1)$ has a clique of size $(1+o(1)) n$ since its  complement   contains $o(n)$ edges. Thus,  whp as $n \rightarrow \infty$
\[
	\chi(\G(n,p_1))  = (1+o(1)) n.
\]
 On the other hand, if $p_2 = 1 - \frac{\log^2 n}{n}$   
	then whp the complement of   $\G(n, p_2)$ contains a perfect matching as shown by  Erd\H os and R\'enyi \cite{ER66}. 
	Thus,  whp  as $n \rightarrow \infty$ 
	\[
		\chi(\G(n,p_2))  \leq (1+o(1)) \frac{n}{2}.
	\]
Note  that formula   \eqref{eq:Gnp}  is not valid for  $p= p_1$, but it might still be true for $p=p_2$, because 
\[
	\frac{  \log \frac {1}{1-p_1}}{ \log ( p_1 n)}  = 1+o(1) \qquad  \text{and} \qquad \frac{  \log \frac {1}{1-p_2}}{ \log ( p_2 n)}  = 1+o(1).
\]

More generally, for the case when $p = 1- n^{O(1)}$, we conjecture the following.
\begin{conj}\label{Con:Gnp} 
 Let  $r\ge 2$ be a fixed integer and $p = p(n)\in [0,1]$ be such that  
 \[
 n^{-\frac{2}{r+1}} \gg 1-p \gg n^{-\frac{2}{r}}.
 \] 
 Then,  $\chi(\G(n,p)) = (1+o(1)) \frac{n}{r}$  whp as $n \rightarrow \infty$ . 
 \end{conj}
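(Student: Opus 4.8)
The plan is to pass to the complement. Put $q:=1-p$; then the hypothesis reads $n^{-2/r}\ll q\ll n^{-2/(r+1)}$, and the complement of $\G(n,p)$ has the law of $\G(n,q)$. Since the colour classes of a proper colouring of $\G(n,p)$ are exactly the cliques of $\G(n,q)$, we have $\chi(\G(n,p))=\theta(\G(n,q))$, where $\theta$ is the least number of cliques needed to partition the vertex set. Let $N_k$ be the number of copies of $K_k$ in $\G(n,q)$, so $\E N_k=\binom nk q^{\binom k2}$.

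For the lower bound I would use only first moments. The bound $q\ll n^{-2/(r+1)}$ gives $\E N_{r+1}=o(n)$ and $\E N_{r+2}=o(1)$, so by Markov's inequality, for every fixed $\delta>0$ whp $\G(n,q)$ has no $K_{r+2}$ and at most $\delta n$ copies of $K_{r+1}$. In any partition of the vertices into cliques, the $(r+1)$-cliques then cover at most $(r+1)\delta n$ vertices, so at least $(1-(r+1)\delta)n$ vertices lie in cliques of size $\le r$, and there are at least $(1-(r+1)\delta)n/r$ of those. Letting $\delta\to0$ yields $\chi(\G(n,p))=\theta(\G(n,q))\ge(1-o(1))\,n/r$ whp.

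For the upper bound the crux is the statement: \emph{for every fixed $\eps>0$, whp every $U\subseteq[n]$ with $\abs{U}\ge\eps n$ spans a copy of $K_r$ in $\G(n,q)$.} Granting this, pick (greedily) a maximal family $A_1,\dots,A_t$ of pairwise disjoint $r$-cliques of $\G(n,q)$; by maximality $[n]\setminus\bigcup_iA_i$ spans no $K_r$, hence whp has fewer than $\eps n$ vertices, so $t>(1-\eps)n/r$. Colouring $\G(n,p)$ by one colour per $A_i$ (each $A_i$ is independent in $\G(n,p)$) plus one private colour per uncovered vertex uses $t+(n-rt)=n-(r-1)t<\tfrac nr\bigl(1+(r-1)\eps\bigr)$ colours; as $\eps$ is arbitrary, $\chi(\G(n,p))\le(1+o(1))n/r$ whp, which with the lower bound gives the conjecture.

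The real work is the crux, and I expect that to be the main obstacle. Fix $U$ with $\abs{U}=\lceil\eps n\rceil$ and let $\mu:=\binom{\abs{U}}{r}q^{\binom r2}$; since $q\gg n^{-2/r}$ one gets $\mu=\Theta\bigl(\eps^r n\,(qn^{2/r})^{\binom r2}\bigr)=\omega(n)$. Janson's inequality gives $\Pr\bigl[\G(n,q)[U]\text{ is }K_r\text{-free}\bigr]\le\exp(-\mu+\Delta)$, where $\Delta=\sum_{j=2}^{r-1}\Theta\bigl(\abs{U}^{\,2r-j}q^{\,2\binom r2-\binom j2}\bigr)$ comes from pairs of $r$-cliques sharing an edge. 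Writing $q=n^{-2\alpha}$ with $\tfrac1{r+1}<\alpha<\tfrac1r$, the $n$-exponent of the $j$-th term of $\Delta/\mu$ equals $(r-j)-2\alpha\bigl(\binom r2-\binom j2\bigr)$, a convex function of $j$ whose values at $j=2$ and $j=r-1$ are negative on the whole range of $\alpha$ (it is the constraint $\alpha>\tfrac1{r+1}$, i.e.\ $q\ll n^{-2/(r+1)}$, that makes the $j=2$ term negative). Hence $\Delta=O(\mu)$ and the probability is $e^{-\Omega(\mu)}=e^{-\omega(n)}$; since $\mu$ grows faster than $\log\binom{n}{\lceil\eps n\rceil}=O_\eps(n)$, a union bound over all such $U$ finishes the argument. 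The one delicate point is to run this Janson estimate uniformly over all $\Theta(\eps n)$-subsets — keeping the clump term $\Delta$ from swamping $\mu$ — and it is precisely here, together with the first-moment step, that both hypotheses on $1-p$ enter.
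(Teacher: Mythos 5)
The statement in question is a \emph{Conjecture} in the paper and the paper does not prove it. The paper only observes that the upper bound $\chi(\G(n,p))\le(1+o(1))\,n/r$ follows from Ruci\'nski's theorem~\cite{Rucinski1992} on almost-perfect $K_r$-matchings in the complement, remarks that the lower bound is \emph{not} covered by known $G$-matching results, and then cites Surya and Warnke~\cite{SW2022} as having confirmed the conjecture. Your proof is self-contained and, as far as I can tell, correct. Your upper bound matches the paper's remark in spirit — an almost-perfect $K_r$-matching in $\G(n,q)$ with $q=1-p$ — but you prove its existence directly via a Janson-plus-union-bound argument (whp every $\eps n$-subset of $\G(n,q)$ spans a $K_r$) rather than invoking Ruci\'nski. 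Your lower bound, which is precisely the part the paper flags as the open ingredient, is a clean first-moment argument in the complement: from $q\ll n^{-2/(r+1)}$ you get $\E N_{r+2}=o(1)$ and $\E N_{r+1}=o(n)$, so whp there is no $K_{r+2}$ and at most $\delta n$ copies of $K_{r+1}$, forcing every clique partition of $\G(n,q)$ to use at least $(1-(r+1)\delta)n/r$ cliques of size at most $r$, and letting $\delta\to 0$ closes the lower bound.

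One small imprecision to fix: in the Janson step you write $\Delta=O(\mu)$, but what the conclusion $e^{-\Omega(\mu)}$ requires — and what your exponent computation actually gives — is $\Delta=o(\mu)$. The $j$-th exponent is $(r-j)\bigl(1-\alpha(r+j-1)\bigr)$ with $q=n^{-2\alpha}$, which is strictly negative for $j\in\{2,\dots,r-1\}$ once $\alpha>1/(r+1)$, but the $j=2$ exponent tends to $0$ as $\alpha\downarrow 1/(r+1)$, so you should record why it still vanishes: writing $q=n^{-2/(r+1)}g(n)$ with $g(n)\to0$ gives $\Delta_2/\mu=\Theta\bigl((nq^{(r+1)/2})^{r-2}\bigr)=\Theta\bigl(g(n)^{(r+1)(r-2)/2}\bigr)\to0$; the $j=r-1$ exponent is bounded away from $0$ for $r\ge4$, and for $r\le 3$ the endpoints coincide or the sum is empty. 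With $\Delta=o(\mu)$ the Janson bound is $\exp(-(1+o(1))\mu)=e^{-\omega(n)}$, which dominates the $e^{O_\eps(n)}$ factor from the union bound over all $\lceil\eps n\rceil$-subsets.
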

 
As observed above, 
$\G(n,p)$ can be coloured  in $\frac{n}{2}$ colours if its complement has a perfect matching.
In fact, to achieve a colouring  with at most 
$(1+o(1)) \frac{n}{2}$ colours,  it is sufficient that the complement  of $\G(n,p)$ contains an almost perfect matching covering $n-o(n)$ vertices.   
Similarly, for any fixed integer $r\ge 2$, in order  to show that $\chi(\G(n,p)) \leq (1+o(1)) \frac{n}{r}$, it is sufficient to find an almost perfect $K_{r}$-matching in the complement of $\G(n,p)$. (Throughout the paper, $K_r$ denotes the complete graph with vertex set $[r]$ or the clique of size $r$.) For an arbitrary graph $G$, the  thresholds  for the existence of perfect $G$-matchings and almost perfect $G$-matchings was studied by Ruci\'nski \cite{Rucinski1992}
and  by  Johansson, Kahn, and Vu \cite{JKV2008}. In particular, \cite[Theorem 4]{Rucinski1992} establishes the existence of an almost  perfect $K_{r}$-matching if 
$n (1-p)^{r/2} \gg 1$ which implies the upper bound of Conjecture \ref{Con:Gnp}. However, the lower bound for $\chi(\G(n,p))$ does not follow from the known results  on $G$-matchings since an optimal colouring migh have  colour classes of different sizes.

{Conjecture \ref{Con:Gnp} was recently confirmed by Surya and Warnke; see  \cite[Theorem 13]{SW2022}.}

 \subsection{Percolations on blow-up graphs}

 Given a graph $G=(V(G),E(G))$ and $p \in  (0, 1)$,  the percolated random graph $G_p$, which is also known as a random subgraph of $G$, is generated from $G$ by keeping each edge in $E(G)$ independently with probability $p$. In particular, if $G=K_n$, then $G_p$ is equivalent to  the binomial random graph  $\G(n,p)$. 
In this case, formula
	 \eqref{eq:Gnp} can read as follows: whp 
	 \begin{equation}\label{eq:perc}	
	 	\chi(G_p) = (1+o(1)) \frac{ \log (\frac {1}{1-p})}{2 \log ( p n)}\, \chi(G), \qquad 
	 	\text{as $n  =  |V(G)| \rightarrow \infty$.}
 	 \end{equation}

 In this paper we show that \eqref{eq:perc} holds   when  $G$ is  a  \emph{blow-up graph} $G_H(\n)$
  constructed as follows. Given a graph $H$ on vertex set $[k]$ and 
a vector $\n =  (n_1,\ldots,n_k)^T \in \Naturals^k$,   we denote by $G_H(\n)$ the graph  obtained from $H$ by replacing  each vertex $i \in [k]$ 
with $K_{n_i}$. An edge between any two vertices from different cliques appears in $G_H(\n)$ if the corresponding edge is present in $H$.
One  can consider  the blow-up graph $G_H(\n)$ as a special case of a ``random'' graph  from 
the  stochastic block model     by setting all probabilities $1$ or $0$ according to the adjacency matrix of the graph $H$.

\begin{figure}[h!]
	\centering
	\begin{tikzpicture}[scale=0.6]
		
		\draw [ultra thick, rounded corners] (-6,6) rectangle (-4,8);
			\node (u1) at (-5,7) {$K_{n_1}$};

		\draw [ultra thick, rounded corners] (-2,6) rectangle (0,8);
		\node (u2) at (-1,7) {$K_{n_2}$};

			\draw [ultra thick, rounded corners] (2,6) rectangle (4,8);
		\node (u3) at (3,7) {$K_{n_3}$};

			\draw [ultra thick, rounded corners] (-2,2) rectangle (0,4);
		\node (u4) at (-1,3) {$K_{n_4}$};

		\draw [ultra thick, rounded corners] (2,2) rectangle (4,4);
		\node (u5) at (3,3) {$K_{n_5}$};
		
	\draw[thick] (u1) edge[bend left=16] (u2);
			\draw[thick] (u1) edge[bend left=-32] (u2);
			\draw[thick] (u1) edge[bend left=32] (u2);
		\draw[thick] (u1) edge  (u2);
		\draw[thick] (u1) edge[bend right=16] (u2);

	\draw[thick] (u2) edge[bend left=16] (u3);
\draw[thick] (u2) edge[bend left=-16] (u3);
\draw[thick] (u2) edge[bend left=0] (u3);
		\draw[thick] (u3) edge[bend left=-32] (u2);
		\draw[thick] (u3) edge[bend left=32] (u2);
	
	\draw[thick] (u2) edge[bend left=16] (u4);
	\draw[thick] (u2) edge[bend left=-16] (u4);
	\draw[thick] (u2) edge[bend left=0] (u4);	
			\draw[thick] (u4) edge[bend left=-32] (u2);
		\draw[thick] (u4) edge[bend left=32] (u2);

		\draw[thick] (u5) edge[bend left=16] (u4);
		\draw[thick] (u5) edge[bend left=-16] (u4);
		\draw[thick] (u5) edge[bend left=0] (u4);	
		\draw[thick] (u4) edge[bend left=-32] (u5);
		\draw[thick] (u4) edge[bend left=32] (u5);

			\draw[thick] (u5) edge[bend left=16] (u3);
		\draw[thick] (u5) edge[bend left=-16] (u3);
		\draw[thick] (u5) edge[bend left=0] (u3);	
		\draw[thick] (u3) edge[bend left=-32] (u5);
		\draw[thick] (u3) edge[bend left=32] (u5);
		
		\begin{scope}[shift={(12,0)}]
			
				\node (v1) [label=above:{$1$}] at (-5,7) {};
				\draw [fill] (v1) circle (0.2);
			
				\node (v2) [label=above:{$2$}] at (-1,7) {};
			\draw [fill] (v2) circle (0.2);
			
			 	\node (v3) [label=above:{$3$}] at (3,7) {};
			 \draw [fill] (v3) circle (0.2);

			 	\node (v4) [label=left:{$4$}] at (-1,3) {};
			 \draw [fill] (v4) circle (0.2);

			 	\node (v5) [label=right:{$5$}] at (3,3) {};
			 \draw [fill] (v5) circle (0.2);
 
 	\draw[thick] (v1)--(v2)--(v4)--(v5) -- (v3) -- (v2);
 
		\end{scope}
	\end{tikzpicture}
	\caption{A blow-up graph $G_H(\n)$ (left) for a graph $H$ on 5 vertices (right).}
	\label{f:blow-up}
\end{figure}

Everywhere in this paper  the norm notation $\|\cdot\|$ stands for the $1$-norm:
\[
\|\n\|= n_1 +\cdots +n_k.
\] 
\begin{thm}\label{T:perc} 
Let $\epsilon \in (0,\frac14)$ be fixed and $H$ be a graph with vertex set $[k]$. Assume $\n = \n(n) \in \Naturals^k$  and $p=p(n) \in (0,1)$ are such that  as $n \rightarrow \infty$, 
 \[
 	{ \|\n\| \rightarrow \infty,\qquad 
 	  p \geq  \|\n\|^{-\frac{1}{4} +\epsilon}, \qquad 
  1-p   =    \|\n\|^{-o(1)}.}
 \]
   Then,    
	   \eqref{eq:perc} with $G = G_H(\n)$  holds whp.
\end{thm}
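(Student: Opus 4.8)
The plan is to recognise $G_H(\n)_p$ as a random graph from the stochastic block model and to deduce Theorem~\ref{T:perc} from Theorem~\ref{Thm_sbm}. Conditioning on the partition of the vertex set into the $k$ cliques $K_{n_1},\dots,K_{n_k}$, the restriction of $G_H(\n)_p$ to the $i$-th clique is a copy of $\G(n_i,p)$, the restriction to the $i$-th and $j$-th cliques is a random bipartite graph with edge probability $p$ when $\{i,j\}\in E(H)$, and is empty when $\{i,j\}\notin E(H)$. Hence $G_H(\n)_p$ has the same distribution as $\G(\n,P)$ with $P=P(H,p)$ the $k\times k$ symmetric matrix given by $P_{ii}=p$ for all $i$, $P_{ij}=p$ whenever $\{i,j\}\in E(H)$, and $P_{ij}=0$ otherwise; zero entries are permitted in the model considered in this paper, so this is a legitimate instance.

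First I would check that $\n$ and $P(H,p)$ satisfy the hypotheses of Theorem~\ref{Thm_sbm}. The assumptions $\|\n\|\to\infty$, $p\ge\|\n\|^{-1/4+\epsilon}$ and $1-p=\|\n\|^{-o(1)}$ are precisely what is needed for the nonzero entries of $P$ to be neither too small nor too close to $1$, and a fixed number of blocks is the degenerate case of a slowly growing one. Along the way I would record that, by \eqref{eq:Gnp} (and Theorem~\ref{T:oneblock} in the dense regime) applied to the largest clique, $\chi(G_H(\n)_p)$ is at least a positive power of $\|\n\|$ whp, and that $\chi(G_H(\n))\ge\max_i n_i\ge\|\n\|/k\to\infty$, so the $(1+o(1))$-statements below are non-vacuous. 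If Theorem~\ref{Thm_sbm} formally demands that every block be large, one further preliminary step is to delete the cliques $K_{n_i}$ with $n_i$ below a slowly growing threshold and to recolour their $\|\n\|^{o(1)}$ vertices with fresh colours at the very end; since $\chi(G_H(\n)_p)$ is polynomially large, this changes neither $\chi(G_H(\n)_p)$ nor $\chi(G_H(\n))$ by more than a $1+o(1)$ factor.

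It then remains to show that the quantity which Theorem~\ref{Thm_sbm} produces for $\G(\n,P(H,p))$ equals $(1+o(1))\frac{\log(1/(1-p))}{2\log(p\|\n\|)}\,\chi(G_H(\n))$. I would do this by matching both of these to the weighted chromatic linear program of $H$,
\[
\chi^{*}(H;\n)=\min\Bigl\{\sum_{I}y_I\ :\ y\ge 0,\ \sum_{I\ni i}y_I\ge n_i\ \text{ for every }i\Bigr\},
\]
where $I$ ranges over the independent sets of $H$. A proper colouring of $G_H(\n)$ is nothing but a choice of colour sets $C_1,\dots,C_k$ with $|C_i|=n_i$ and $C_i\cap C_j=\varnothing$ for $\{i,j\}\in E(H)$, so $\chi(G_H(\n))$ is exactly the integer optimum of this program; since an optimal basic solution has support $O(1)$, rounding gives $\chi(G_H(\n))=(1+o(1))\chi^{*}(H;\n)$. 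On the random side, because every nonzero entry of $P$ equals $p$, a colour class of $\G(\n,P)$ can take up to $\alpha(\G(n_i,p))=(1+o(1))\frac{2\log(pn_i)}{\log(1/(1-p))}$ vertices out of each clique it meets, with no constraint among the cliques it meets provided their indices form an independent set of $H$; if instead it meets two $H$-adjacent cliques $i,j$, it must avoid all edges of the dense bipartite graph between its parts, and the large-deviation cost $(1-p)^{t_it_j}$ of choosing $t_i,t_j$ such vertices renders this strictly inefficient. Granting this, the Theorem~\ref{Thm_sbm} quantity is $(1+o(1))\min\{\sum_I y_I:\ \sum_{I\ni i}\alpha_i y_I\ge n_i\}$ with $\alpha_i=(1+o(1))\frac{2\log(pn_i)}{\log(1/(1-p))}$; replacing each $\log(pn_i)$ by $\log(p\|\n\|)$ is a $1+o(1)$ change on the cliques whose size is a sub-polynomial factor away from $\|\n\|$ (here $\log(p\|\n\|)\to\infty$ is used), while the remaining cliques contribute only a negligible fraction of the optimum, so the program reduces to $(1+o(1))\frac{\log(1/(1-p))}{2\log(p\|\n\|)}\chi^{*}(H;\n)=(1+o(1))\frac{\log(1/(1-p))}{2\log(p\|\n\|)}\chi(G_H(\n))$, which is \eqref{eq:perc} with $G=G_H(\n)$.

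The crux is the decoupling assertion in the previous paragraph: that, inside the optimisation furnished by Theorem~\ref{Thm_sbm}, colour classes straddling an edge of $H$ may be dropped. If Theorem~\ref{Thm_sbm} is phrased via an optimisation over weight vectors, this should fall out of inspecting that functional --- the optimal weights cannot be bounded away from $0$ on both endpoints of an edge of $H$, again because of the factor $(1-p)^{t_it_j}$; otherwise it has to be extracted from the weighted-independence-number estimates underpinning Theorem~\ref{Thm_sbm}. Everything else --- the reformulation as a stochastic block model, the removal of tiny cliques, the linear-programming rounding, and the elementary comparison of $\log(pn_i)$ with $\log(p\|\n\|)$ --- is routine bookkeeping.
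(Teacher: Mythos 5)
The high-level skeleton is right --- view $G_H(\n)_p$ as $\G(\n,P)$ with $P=p(I+A(H))$, apply Theorem~\ref{Thm_sbm}, and then match the resulting $w_*$-expression to a combinatorial formula for $\chi(G_H(\n))$. The paper does exactly this. The gap is in the matching step, which you acknowledge as ``the crux'' but leave unproven, and the heuristic you offer for it is not correct. You claim that, in the optimisation furnished by Theorem~\ref{Thm_sbm}, ``colour classes straddling an edge of $H$ may be dropped'' because the factor $(1-p)^{t_it_j}$ renders them strictly inefficient, and that ``the optimal weights cannot be bounded away from $0$ on both endpoints of an edge of $H$.'' Both assertions fail already for $H=K_3$ and $\n=(n,n,n)$: there $G_H(\n)_p=\G(3n,p)$, every colour class of the optimal colouring straddles all three blocks, and the unique maximiser of $w(\cdot,\tilde Q)$ is the full vector $\n$, whose support is the whole edge set of $H$. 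The straddling classes are not inefficient --- they are exactly optimal --- and the reason the fractional relaxation $\chi^*(H;\n)$ nevertheless returns the right answer is only the smoothing $\log(pn_i)=(1+o(1))\log(p\|\n\|)$ for the dominant blocks, not any structural preference against straddling. That smoothing argument, which you do mention, cannot be separated from the matching step the way your proposal suggests.

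To close the gap you would need a precise link between $w(\cdot,\tilde Q)$ and a combinatorial quantity for the blow-up, and that is exactly the paper's key identity: for $\tilde Q=I+A(H)$ and $S\subseteq V(G_H(\n))$, one has $w(\b(S),\tilde Q)=1+\mad(G_H(\n)[S])$, obtained by expanding $\b(U)^T\tilde Q\,\b(U)=|U|+2|E(G[U])|$ and using the corner-maximiser property of Theorem~\ref{l:Qmatrix}(b). Combined with Lemma~\ref{L1:perc}, which rewrites $\chi(G)$ as $\min_{\mathcal U}\sum_{S\in\mathcal U}(1+\mad(G[S]))$, and with the near-optimal integer system of Theorem~\ref{l:Qmatrix}(g) to pass between real and integer vector systems, this gives $\chi(G)=w_*(\n,\tilde Q)+O(1)$ directly, with no need for the fractional LP $\chi^*(H;\n)$ or any decoupling claim. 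Your LP route could in principle be made rigorous --- one can show $w(\xvec,\tilde Q)\ge\chi^*(H;\xvec)$ via the same $\mad$ identity plus the degeneracy bound $\chi\le 1+\mad$, and then use subadditivity of $\chi^*(H;\cdot)$ --- but that proof would be going through the paper's lemma in any case, so the LP is an unnecessary detour; and the ``strictly inefficient'' reasoning you offer in its place does not work.
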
 
We prove Theorem \ref{T:perc} in Section \ref{S:perc}.  
Note that Theorem \ref{T:perc} with  $k=1$ and $n_1=n$ (and thus $G_H(\n) = K_n$) 
 recovers Theorem~\ref{T:oneblock}.

Determining the chromatic number of a random subgraph $G_p$ for a general graph $G$
is a much harder problem; see, for example, ~\cite{AlonKrivelevichSudakov1997, Berkowitz,Bukh,MoharWu,Shinkar}. In particular, Bukh asks {\cite{Bukh}}  whether for any graph $G$, there exists a positive constant $c$ such that  
$\mathbb E \chi(G_{{1}/{2}}) \ge \frac{c}{\log (\chi(G))}\, \chi(G).$ 
Using standard concentration results,   Bukh's question  {for blow-up graphs} 
  is equivalent to that  whp
 \begin{equation*}
  \chi(G_{{1}/{2}}) \ge \frac{c}{\log ( |V(G)|)}\, \chi(G).
  \end{equation*}
  Theorem \ref{T:perc} establishes this bound for blow-up graphs.  
  It would be interesting to find other classes of graphs that satisfy  \eqref{eq:perc} (or at least its lower bound).



 \subsection{Chung-Lu model}
 As mentioned, our main result (Theorem~\ref{Thm_sbm}) allows the number of blocks to grow. Thus,   
 one can study $\chi(\G)$ for  general inhomogenous random graphs $\G$ using approximations by the stochastic block model.     To demonstrate the idea, we consider the following two random graph models. 
  Given $\uvec  = (u_1,\ldots, u_n)^T \in [0,1]^n$ and $p \in [0,1]$, a  random graph 
 $\G^{\times}_p \sim \M^{\times}(\uvec,p)$  has vertex set $[n]$ and edges $ij$  are generated independently of each other with probabilities
\[
	p_{ij}^{\times} = p\, u_i u_j \quad\quad   i,j \in [n].
\]
Similarly, given $\uvec \in [0,1]^n$ and $p \in [0,\dfrac12]$,  a  random graph 
 $\G^{+}_p \sim \M^{+}(\uvec,p)$  has vertex set $[n]$ and  edges  $ij$ are generated independently of each other with probabilities
\[
	p_{ij}^{+} = p\, (u_i + u_j)\quad\quad   i,j \in [n].
\]

The model $ \M^{\times}(\uvec,p)$ is known as  the Chung-Lu  random graph model  and it is of central importance in the network analysis; for more extensive background, see, for example,  \cite{CL2002} and references therein. For decreasing $p = p(n)$, the model $ \M^{+}(\uvec,p)$ is asymptotically equivalent to the   complement of  the Chung-Lu    model.

 \begin{thm}\label{T:main4}
		Let  $\epsilon>0$ {be fixed} and  $p = p(n)$ be such  $1 \gg p \geq n^{-\frac14 +\epsilon}$
		as $n \rightarrow \infty$.
		Then,   whp  uniformly over $\uvec\in [0,1]^n$ satisfying 
		$\sum_{i\in[n]}u_i   = \Omega(n)$,  the following hold:
\begin{itemize}		
		\item[(a)] $\displaystyle 	\chi(\G_p^{\times})   =  (1+o(1))\frac{ p }{2 \log (pn)}\, 
		\max_{U \subseteq [n]} \frac{1}{|U|} \left(\sum_{i\in U} u_i\right)^2$,
	 where $\G_p^{\times} \sim \M^{\times}(\uvec,p)$;
	 \item[(b)]
	 $
	 	\displaystyle
	 	\chi(\G_p^{+}) =  (1+o(1))\frac{ p }{\log (pn)} \, \sum_{i\in [n]} u_i,
	 	$
	  where $\G_p^{+} \sim \M^{+}(\uvec,p).$ 
		\end{itemize}
%
\end{thm}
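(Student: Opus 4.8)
The plan is to deduce both parts from Theorem~\ref{Thm_sbm} by approximating the inhomogeneous models $\M^{\times}(\uvec,p)$ and $\M^{+}(\uvec,p)$ by stochastic block models with a growing number of blocks.

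First I would fix a threshold $\delta=\delta(n)\to0$ and discretise the weights. The vertices $i$ with $u_i\le\delta$ are dealt with separately: in the multiplicative model every edge at such a vertex has probability at most $p\delta$, so these vertices span a graph dominated by $\G(n,p\delta)$, whose chromatic number is $O\big(\tfrac{p\delta n}{\log(pn)}\big)=o\big(\tfrac{pn}{\log(pn)}\big)$ by~\eqref{eq:Gnp}; colouring them last and greedily then changes $\chi(\G^{\times}_p)$ only by a lower-order term, and a short convexity estimate (using $\sum_i u_i=\Omega(n)$, which forces the maximiser of $\tfrac1{|U|}(\sum_{i\in U}u_i)^2$ to have $|U|=\Omega(n)$) shows that discarding them alters this maximum only by a factor $1+O(\delta)$. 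The vertices with $u_i\in(\delta,1]$ I would group into blocks on which $u_i$ varies by a factor $1+o(1)$; a geometric partition of $(\delta,1]$ does this with $K=K(n)\to\infty$ blocks, which Theorem~\ref{Thm_sbm} permits. Writing $\tilde u_a$ for a representative weight of block $a$ and $\n$ for the vector of block sizes, a monotone coupling on i.i.d.\ uniforms attached to the pairs sandwiches $\G^{\times}_p$ between the block models $\G(\n,P)$ with the two rank-one matrices $P_{ab}=p\,\tilde u_a^{\pm}\tilde u_b^{\pm}$, where $\tilde u_a^{+}/\tilde u_a^{-}=1+o(1)$; since both $\log\tfrac1{1-q}$ and $\log(qn)$ in~\eqref{eq:Gnp} are insensitive to multiplicative $(1+o(1))$-perturbations of probabilities bounded away from $0$ and $1$, this discretisation is harmless. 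For the additive model I would argue identically with the rank-two matrix $P_{ab}=p(\tilde u_a+\tilde u_b)$, and here no vertices need be set aside since $u_i+u_j$ is bounded.

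Second, and this is the crux, I would apply Theorem~\ref{Thm_sbm} to $\G(\n,P)$ and simplify its variational formula for these structured $P$. For $P_{ab}=p\,\tilde u_a\tilde u_b$ the data encoding the inhomogeneity depends on a block only through the scalar $\tilde u_a$, so the optimisation degenerates and its extremiser is a union of blocks, i.e.\ a subset $U\subseteq[n]$ once the $\tilde u_a$ are unfolded into the underlying weights; the bound $\tfrac1{|U|}(\sum_{i\in U}u_i)^2\ge\tfrac1n(\sum_i u_i)^2=\Omega(n)$ then forces $|U|=\Omega(n)$, so $\log(p|U|)=(1+o(1))\log(pn)$. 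Passing to the limit in the block refinement and in $\delta\to0$ yields $\chi(\G^{\times}_p)=(1+o(1))\tfrac{p}{2\log(pn)}\max_{U\subseteq[n]}\tfrac1{|U|}(\sum_{i\in U}u_i)^2$, which is~(a). Running the same argument for $P_{ab}=p(\tilde u_a+\tilde u_b)$, the extremiser comes out to be all of $[n]$ and the value reduces to $\tfrac{p}{\log(pn)}\sum_{i\in[n]}u_i$; as a consistency check, for $u_i\equiv u$ these recover $\chi(\G(n,pu^2))=(1+o(1))\tfrac{pu^2n}{2\log(pn)}$ and $\chi(\G(n,2pu))=(1+o(1))\tfrac{2pun}{2\log(pn)}$ from~\eqref{eq:Gnp}.

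Finally I would upgrade the pointwise estimate to a statement uniform in $\uvec$. After the rounding above there are only $\exp(\mathrm{polylog}\,n)$ possible block-size vectors $\n$, and for each the conclusion of Theorem~\ref{Thm_sbm} for the two fixed sandwiching block models holds with failure probability $\exp(-n^{\Omega(1)})$, by concentration of the chromatic number around its mean via the vertex-exposure martingale together with $p\ge n^{-1/4+\epsilon}$; a union bound over the $\exp(\mathrm{polylog}\,n)$ choices of $\n$, combined with the monotone coupling, then gives the stated asymptotics for all admissible $\uvec$ at once. The main obstacle is the second step: reading off the displayed closed forms from the general variational expression of Theorem~\ref{Thm_sbm} specialised to rank-one and rank-two probability matrices, and checking that neither the block discretisation nor the removal of low-weight vertices perturbs the value by more than $1+o(1)$; once the pointwise statement is in hand, the reduction and the passage to uniformity in $\uvec$ are routine.
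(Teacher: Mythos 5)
Your overall strategy coincides with the paper's: discretise the weight vector into a stochastic block model with a growing number of blocks, sandwich $\G_p^{\times}$ (resp.\ $\G_p^{+}$) between two such block models via a monotone coupling, apply Theorem~\ref{Thm_sbm} to each, and then simplify the variational quantity. The implementation differences you introduce are inessential: the paper uses the arithmetic partition $S_i=(\tfrac{i-1}{k},\tfrac{i}{k}]$ of $[0,1]$ with $1\ll k\ll\log n$, so the low-weight vertices simply land in block $1$ and are handled automatically by setting the corresponding row of $P^L$ to zero, with no separate excision step or convexity lemma needed. The union bound over rounded block-size vectors $\n$ and the vertex-exposure martingale in your final step are likewise superfluous: the whp conclusion of Theorem~\ref{Thm_sbm} is already stated uniformly over all admissible parameter sequences $(\n,P)$, so uniformity in $\uvec$ follows directly once the sandwiching is in place.

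The genuine gap is in the step you yourself flag as the crux, reading off the closed form from Theorem~\ref{Thm_sbm}. That theorem expresses $\chi(\G)$ in terms of $w_*(\n,Q)$, which is an \emph{infimum} over systems of vectors summing to $\n$; the object whose extremiser is ``a union of blocks'' is $w(\n,Q)$, the boxed \emph{maximum} from Theorem~\ref{l:Qmatrix}(b). Your phrase ``the optimisation degenerates and its extremiser is a union of blocks'' only handles $w$, not $w_*$. To get from $w_*$ to $w$ you need the pseudodefinite property of Theorem~\ref{l:Qmatrix}(c): for the multiplicative model this is the observation that $\xvec^T P^L\xvec=\tfrac{p}{k^2}\bigl(\sum_a x_a(a-1)\bigr)^2\ge0$ (rank one, PSD), and for the additive model that $\yvec^T\widehat P^L\yvec=\tfrac{2p}{k}\bigl(\sum_a y_a(a-1)\bigr)\bigl(\sum_b y_b\bigr)=0$ on $\{\sum y_a=0\}$, so that in both cases $w_*=w$ and the infimum is attained by the trivial system. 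This is exactly what the paper's Lemmas~\ref{L:w-star-star} and~\ref{L:w-star-star2} establish, and the same lemmas also supply $w_*=\Omega(pn)$, which you need to verify assumption~\eqref{ass_2} before Theorem~\ref{Thm_sbm} can be invoked at all; your proposal does not carry out that verification. Without the $w_*=w$ reduction your plan does not produce the displayed formulas, because a priori the minimising system for $w_*$ could be nontrivial and strictly beat the single-type colouring.
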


We prove Theorem \ref{T:main4} in Section \ref{S:main4}.  
Theorem \ref{T:main4} applies to the case  when {a constant fraction} of  expected degrees of the random graphs  
$\G_p^{\times}$
and $\G_p^{+}$ are within a multiplicative constant of {the maximum expected degree}.    We believe that  the formulas of Theorem \ref{T:main4}  can be extended to allow a larger variation of components of $\uvec$ covering, for example, power-law degree sequences.


\section{Stochastic block model}\label{S:SBM}

Before stating our main result on the chromatic number of a random graph from the stochastic block model,  we first define the stochastic block model formally.
For a positive integer $\nb$, a  vector $\n=(n_1,\ldots,n_{\nb})^T\in \mathbb{N}^{\nb}$, and 
a $\nb \times \nb$ symmetric matrix  $P=(p_{ij})_{i,j \in [k]}$ with $p_{ij}  \in [0,1]$,  
a  random graph $\G$ from the stochastic block model  $\SBM$,  denoted by $\G \sim   \SBM$,   is constructed as follows: 
\begin{itemize}
	\item the vertex set $V(\G)$ is partitioned into $k$ disjoint blocks 
	$B_1,\ldots, B_k$  of  sizes $|B_i| = n_i$ for $i \in [k]$ (and we write $V(\G) = B_1 \cup \cdots \cup B_k$);   
	\item  each pair $\{u, v\}$ of distinct vertices $u,v \in V(\G)$ is included in the edge set $E(\G)$,  
	independently of one another,  with probability 
	$$ 
	p(u,v) := p_{ij},
	$$
	where $i = i(u) \in [k]$ and $j = j(v) \in [k]$ are such $u \in B_i$ and $v \in B_j$.
\end{itemize}

{Throughout the paper, for all asymptotic notation, 
we implicitly consider sequences of vectors $\n= \n(n) \in \Naturals^k$ and  $k\times k$ symmetric matrices $P=P(n)$, where
\[
k = k(n), \qquad\n(n) = (n_1(n), \ldots n_k(n))^T, \qquad  P = \Big(p_{ij}(n)\Big)_{i,j\in [k]}.
\]
 Our    bounds (including whp results) hold uniformly over all   sequences  $\n(n)$ and  $P(n)$, where $n \rightarrow \infty$,   satisfying stated  assumptions where the implicit functions like in $o(\cdot)$ depend on $n$ only.  
 Apart from the standard Landau notation $o(\cdot)$ and $O(\cdot)$,
 we also  use the notation  $a_n=\omega(b_n)$ or $a_n=\Omega(b_n)$  if $a_n>0$ always
and $b_n=o(a_n)$ or $b_n=O(a_n)$, respectively.
We write $a_n = \Theta(b_n)$ if $a_n = O(b_n)$ and $b_n = O(a_n)$. 
If both $a_n$ and $b_n$ are positive sequences, we also write
$a_n\ll b_n$ if $a_n=o(b_n)$, and $a_n\gg b_n$  if $a_n=\omega(b_n)$. 
For example, $k= \|\n\|^{o(1)}$ means that $\frac{\log k(n)}{\log \|\n(n)\|} \rightarrow 0 $.
}

In the following, we always assume that  $p_{ij}=p_{ji}$ and $0\le p_{ij}<1$ for all $i,j \in [k]$.
Define the $k\times k$ symmetric matrix  $Q=Q(P)$ by 
\begin{equation}\label{Q_def}
Q:= (q_{ij})_{i,j \in [k]}, \qquad \text{where}\ \ q_{ij} :=   \log \left(\dfrac{1}{1-p_{ij}}\right).
\end{equation}
Let  $\pReals :=[0,+\infty)$ and, for $\xvec, \yvec \in \Reals^k$,   we denote 
\[ 
\yvec \preceq \xvec  \text{ whenever{$\xvec-\yvec \in \pReals^k$}.}
\]
Let  $w(\cdot,Q): \pReals^k  \to  \pReals$ be defined by 
\begin{equation}\label{w_def}
w(\xvec,Q)  := \max_{\boldsymbol{0} \preceq \yvec \preceq \xvec}\ \frac{\yvec^T\, Q\, \yvec}{ \|\yvec\|},
\qquad{\xvec \in \pReals^k}, 
\end{equation}
where  $\boldsymbol{0} = (0,\ldots,0)^T \in \Reals^k$ and
$
\|\yvec\| := |y_1| + \ldots + |y_k|.
$
In \eqref{w_def},  we take $\frac{\yvec^T\, Q\, \yvec}{ \|\yvec\|}$   to be zero for $\yvec = \boldsymbol{0}$, so it is   a continuous function of $\yvec$, which  achieves its 
the maximal value on the compact set   $\{\yvec \in \pReals^k:   \yvec \preceq \xvec\}$. 
In fact, it is always achieved  at a corner, where    $y_i \in \{0,x_i\}$ for all $i\in [k]$; see{Theorem \ref{l:Qmatrix}(b). } 

{The quantity $w(\xvec,Q)$ is closely related to the minimum number of colours required to properly colour  an inhomegeneous graph  with "balanced" colour classes. 
	To illustrate it, let us consider a random graph $\G \sim \SBM$, where $\n = (nx_1, n x_2, \ldots, nx_k)^T = n \xvec$  and number of blocks $k$ 
	and all probabilities $p_{ij}\in (0,1)$  are fixed. In order to determine the size of largest "balanced" independent set, we will present here some rough first moment calculations, while the full details are given in Section~\ref{S:weighted}
	and  Section~\ref{S:optimal}.}

\begin{figure}[h!]
	\centering
	\begin{tikzpicture}[scale=0.6]

		\draw [ultra thick, rounded corners] (-10,1) rectangle (-8,5);
		\node at (-9,4.2) {$nx_1$};
		
			\node at (-9,1.35) {$sx_1$};

		\draw [ultra thick, rounded corners] (-7,0) rectangle (-5,8);
		\node  at (-6,7.2) {$nx_2$};;
		
			\node  at (-6,1.1) {$sx_2$};;
		
			\draw [ultra thick, rounded corners] (-4,0) rectangle (-1,10);
		\node   at (-2.5,9.2) {$nx_3$};
		
			\node   at (-2.5,1.1) {$sx_3$};
		
		\node at (1,4.5) {\LARGE $\mathbb{\cdots}$};
		
	 	\draw [ultra thick, rounded corners] (5,1) rectangle (3,5);
	 \node   at (4,4.2) {$nx_k$};
	 
	  \node   at (4,1.35) {$sx_k$};
	  
	  \node at (-10.2, -0.2) {$\boldsymbol{S}$};
	 
	 \draw [ultra thick, dashed, rounded corners]  (-2.5,1) ellipse (8 and 1.5);
	 

	\end{tikzpicture}
	\caption{A  "balanced"  set  $S = \bigcup_{i\in [k]}S_i$ in  $\G \sim \SBM$, where 
		$|S_i| = sx_i$ and $n_i = n x_i$.}
	\label{f:balanced}
\end{figure}
{ 
The expected number of  collections of $k$ disjoint sets $S_i$, each of which takes $s x_i$ vertices from each block $B_i$, such that $\cup_{i\in [k]}S_i$ is an independent set in $\G$ (see Figure  \ref{f:balanced}) is given by \begin{equation}\label{w-intuition}
	\prod_{i \in [k]} \frac{\binom{nx_i}{sx_i}}{(1-p_{ii})^{sx_i}} \cdot \prod_{i,j\in [k] } (1-p_{ij})^{s^2 x_i x_j} 
	 =  \exp\left(-\dfrac{s^2}{2}\xvec^T Q \xvec + O(s \|\xvec\|)\right) \left(\frac{en}{s}\right)^{s \|\xvec\|},
\end{equation} 
where the RHS is derived  via Stirling's formula for any slowly growing $s = s(n) \ll \sqrt{n}$. 
The first moment threshold corresponds to 
\[
\exp\left(-\dfrac{1}{2}s\xvec^T Q \xvec\right) \left(\frac{en}{s}\right)^{\|\xvec\|} =1,
\]
which gives
\[
s  \approx 2 \log n \cdot \frac{\|\xvec\|}{\xvec^T Q\xvec}.
\] However, this might be significantly  above the existence threshold due to the fact that 
our random graph model is inhomogeneous. In particular,  
the appearance of "balanced" independent sets in $\G$ implies the existence of a "balanced" independent set (with the same size proportion $s/n$)  in its subgraph $\G' \sim \calG(\n',P)$ where
$\n'  = n \yvec$ for any $\boldsymbol{0} \preceq \yvec  \preceq \xvec $.
Repeating the arguments of \eqref{w-intuition} for such $\G'$, we conclude that whp
$s$ can not exceed  
\[ 
{2\log n} \cdot  \min_{\boldsymbol{0} \preceq \yvec \preceq \xvec}\ \frac{ \|\yvec\|} {\yvec^T\, Q\, \yvec}= 2\log n \cdot \frac{1}{ w(\xvec,Q)}.
\] 
In Section \ref{S:weighted}, we  show  that it is indeed the existence threshold (for a more general setting that allows vanishing probabilities); see Theorem \ref{T:independent}.
}

Define  $w_*(\cdot,Q): \pReals^k\, \to \ \pReals$  by
\begin{equation}\label{def:mu-star}
w_*(\xvec,Q) := 
\inf_{\calS \in \mathcal{F}(\xvec)} \  \sum_{ \yvec \in \calS}\ w(\yvec,Q),       \qquad{\xvec \in \pReals^k},
\end{equation}
where 
$ \mathcal{F}(\xvec)$ 
consists of finite systems  $\calS$ of vectors from $\pReals^k$ such that $ \sum_{\yvec \in \calS}  \yvec = \xvec.$
In fact, the infimum of  $  \sum_{ \yvec \in \calS} w(\yvec)$ in \eqref{def:mu-star} is  always achieved by a system $\calS \in  \mathcal{F}(\xvec)$ consisting of  at most $k$ vectors; see Theorem \ref{l:Qmatrix}(f).    

{Similarly to $w(\xvec,Q)$, the quantity $w_*(\xvec,Q)$ has a combinatorial meaning as follows. Consider again a random graph $\G \sim \SBM$, where $\n = (nx_1, n x_2, \ldots, nx_n)^T  = n \xvec$ 
	and all probabilities $p_{ij}\in (0,1)$  are fixed. Then whp the {\em minimum} number of colours required to properly colour $\G$ utilising 
	at most $k$ different types of independent sets 
is	asymptotically equal to   
\[   
\frac{n}{2\log n}w_*(\xvec,Q).
\]
The next theorem shows that such colourings are asymptotically optimal, that is, no more than $k$ different types are required to determine $\chi(\G)$  (for a more general setting that allows vanishing probabilities). 
Let 
\begin{equation}\label{def:q-star}
	q^*:= \max_{i\in [k]}q_{ii} \qquad \text{and} \qquad \hat q(\xvec):= \frac{\sum_{i\in [k]} x_i q_{ii}}
	{\|\xvec\|},  \quad {\xvec\neq \boldsymbol{0}.}
\end{equation}
For convenience, we also set $\hat{q}(\boldsymbol{0} ):= q^*$.
}

\begin{thm}\label{Thm_sbm}
	Let $\sigma \in [0, \sigma_0]$  for some fixed  $0<\sigma_0 < \frac14$
	 and 
	let   $P = (p_{ij})_{i,j\in [k]}$ be such that  $p_{ij}=p_{ji}$ and $0\le p_{ij}<1$ for all $i,j \in [k]$.
	Let {$Q:= (q_{ij})_{i,j \in [k]}$ where  $q_{ij} :=   \log \left(\dfrac{1}{1-p_{ij}}\right)$.  Let $q^*$,   $\hat{q}(\cdot)$, and $w_*(\cdot, Q)$
	be  defined  by  \eqref{def:q-star} and  \eqref{def:mu-star}.}
	Assume that  the following asymptotics hold:
	\begin{equation}\label{ass_sbm}
	 \|\n\| \rightarrow \infty, \qquad  k  = \|\n\|^{o(1)}, \qquad  q^* =    \|\n\|^{-\sigma+o(1)}, \qquad \hat{q}(\n) =    \|\n\|^{-\sigma+o(1)}. 
	\end{equation}
	Assume also that  
	\begin{equation}\label{ass_1}
	\left(1+ \dfrac{1}{q^*}\right)\max_{i,j \in [k]} q_{ij} \ll \log\|\n\| 
	\end{equation}
	 and 
	\begin{equation}\label{ass_2}
	w_*(\n,Q) \gg  k \hat q(\n)  q^*\frac{ \|\n\|    }{\log \|\n\|}.
	\end{equation}
	Then, whp 
	\begin{equation*}\label{eq_sbm}
	\chi(\G) = 
	(1+o(1))\frac{w_*(\n,Q)}{ 2(1-\sigma)\log \|\n\| }, \quad \text{ where  $\G \sim \SBM$.}
	\end{equation*}
\end{thm}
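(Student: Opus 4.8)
\textbf{Proof strategy for Theorem~\ref{Thm_sbm}.} The plan is to establish matching upper and lower bounds on $\chi(\G)$, both of which are reduced to understanding the largest ``balanced'' independent sets in induced subgraphs of $\G$. The central tool, which I would first establish (as Theorem~\ref{T:independent} is promised to do), is a concentration statement for the weighted independence number: for $\G' \sim \calG(\n',P)$ with $\n' = \n' (n) \preceq \n$, the maximum size $s$ for which there exists a ``$\yvec$-balanced'' independent set taking $sy_i/\|\n'\|$ vertices from block $i$ is whp $(1+o(1)) \frac{2\log\|\n'\|}{w(\yvec,Q)}\|\yvec\|$-ish, with the correction factor $(1-\sigma)$ coming from the fact that relevant induced subgraphs have $\|\n'\| = \|\n\|^{1-\sigma+o(1)}$ vertices (this is the source of the $(1-\sigma)$ in the denominator). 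The heuristic in \eqref{w-intuition} pins down the first-moment threshold; the assumptions \eqref{ass_1} and \eqref{ass_2} are precisely what make the second-moment / concentration machinery go through uniformly, controlling the error terms $O(s\|\xvec\|)$ and ensuring $q^*$ and $\hat q$ are not so small that the inhomogeneity destroys concentration.

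\textbf{Upper bound (colouring).} I would colour $\G$ greedily: repeatedly extract a large balanced independent set, remove it, and recurse. The key point is that after removing a small fraction of vertices, the remaining graph is still (essentially) a stochastic block model on $\n'' = (1-o(1))\n$ vertices, so the weighted-independence estimate still applies with the same asymptotics. Choosing at each stage the block-proportion vector $\yvec$ optimally, and aggregating over the recursion, the total number of colours used is $(1+o(1))\frac{\|\n\|}{2(1-\sigma)\log\|\n\|}\cdot w_*(\n,Q)/\|\n\|$ — here the definition \eqref{def:mu-star} of $w_*$ as an infimum over systems $\calS$ with $\sum_{\yvec\in\calS}\yvec = \n$ is exactly the aggregation of the per-colour-class contributions $w(\yvec,Q)$. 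One must stop the recursion once a negligible fraction $o(\|\n\|/\log\|\n\|)$ of vertices remains and colour the rest with distinct colours; assumption \eqref{ass_2} guarantees this leftover is of lower order than the main term. Theorem~\ref{l:Qmatrix}(f) (infimum achieved by $\le k$ vectors) keeps the optimization tractable and is used to show the greedy scheme can actually realize $w_*$.

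\textbf{Lower bound.} Here I would argue that in \emph{any} proper colouring, each colour class is an independent set, hence (after splitting into its intersections with the blocks) has size at most roughly $\frac{2(1+o(1))\log\|\n\|}{w(\yvec,Q)}\|\yvec\|$ where $\yvec$ is its block-profile vector — but with a subtlety: a colour class need not be balanced in proportion to $\n$, so the relevant bound is the one for the induced SBM on its own support, which is where $w(\yvec,Q)$ rather than a fixed scalar enters. A union bound over all $o(\|\n\|/\log\|\n\|)$-ish colour classes and all profile vectors $\yvec$ (discretized to a fine enough net, using $k = \|\n\|^{o(1)}$ to keep the net small) shows whp no colouring beats the value $\frac{w_*(\n,Q)}{2(1-\sigma)\log\|\n\|}$, since $w_*$ is by definition the optimum of the resulting fractional covering LP. The main obstacle, I expect, is making the weighted-independence concentration \emph{uniform} over the exponentially many choices of induced subgraph $\n'$ and profile $\yvec$ simultaneously — this is a large-deviation estimate that must beat a union bound of size roughly $\exp(O(k\log\|\n\|))$, and it is exactly for this that the technical hypotheses \eqref{ass_1}–\eqref{ass_2} and the restriction $\sigma_0<\tfrac14$ are imposed; getting the error terms in \eqref{w-intuition} to be genuinely $o(1)$ in the exponent, uniformly, is the delicate heart of the argument.
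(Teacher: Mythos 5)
Your proposal captures the paper's strategy in essentials: both bounds are reduced to the weighted independence number, the upper bound comes from greedy extraction of balanced independent sets via Janson's inequality plus a crude cleanup colouring, and the lower bound follows from a first-moment cap on the weight of any independent set combined with the fact that the colour-class profiles must sum to $\n$ (which forces $\sum_i w(\b(U_i)) \geq w_*(\n)$). Two organizational details differ from the paper and are worth flagging. First, for the upper bound the paper does not choose $\yvec$ adaptively at each stage of the recursion; it instead decomposes $\n = \sum_{t\in[k]} \n^{(t)}$ \emph{once}, using the near-optimal integer system of Theorem~\ref{l:Qmatrix}(g) (not part (f), which only asserts existence of a minimizing system in $\pReals^k$), so that within each $\G^{(t)}$ one has $w_*(\n^{(t)}) = (1+o(1)) w(\n^{(t)})$ and may extract independent sets of a \emph{fixed} proportional profile (Lemma~\ref{L:upper}) — this sidesteps the bookkeeping your adaptive $\yvec$ would require. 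Second, the paper's lower bound does not need a union bound over colourings or a net of profile vectors in $\yvec$: it obtains a single whp bound $\alpha_h(\G) \leq \log(q^*\|\n\|)$ from a first-moment union bound over \emph{subsets} (Lemma~\ref{l:upper_gen}), and then the corner-maximiser property of Theorem~\ref{l:Qmatrix}(b) converts this into the deterministic constraint $w(\b(U)) \leq 2\alpha_h(\G) + q^*$ for every independent $U$ simultaneously — no net over $\yvec$, and no dependence of that probability estimate on $k$. Your version is morally sound but strictly weaker bookkeeping; the corner-maximiser step is the clean device you're reaching for with your ``induced SBM on its own support'' remark.
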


\smallskip
 \begin{remark}
	{In Theorem~\ref{Thm_sbm}, the parameter $\sigma \in [0,\sigma_0]$ governs the density of $\G$. It is convenient for our examples to have  it  not fixed, but treat $\sigma$  as a bounded parameter appearing in the formula for the chromatic number.}
	We believe that the condition $
	\sigma_0< \frac14$    	is an artefact of our proof techniques. Similarly to  the dense case in \cite[Section 7.4]{FK2015} and also to \cite{MPSM2020}, we rely on  Janson's inequality to find sufficiently large independent sets inside any subset of remaining vertices.  Generalisations of the techniques used by
	\L uczak \cite{Luczak1991}  should   extend  Theorem \ref{Thm_sbm} to any $\sigma_0 <1$.
\end{remark}

\begin{remark}
{	Informally, the assumptions of  \eqref{ass_sbm}	 say that  the number of blocks in $\SBM$ is not too big (sublinear in $ \|\n\|$) and the maximum edge probability within a block deviates  not too much (also by a sublinear in $\|\n\|$ factor)   from the average probability within blocks.  Next, the behaviour of  edge probabilities between blocks  is limited by  assumption \eqref{ass_1}: they can vary much more significantly than the diagonal probabilities, but we prohibit them to converge to $1$ too quickly. Note that we allow some edge probabilities  to be small and even $0$, but the upper bounds on the maximal probabilities are essential as demonstrated in Section \ref{S:verydense}. Finally,  \eqref{ass_2} is a technical assumption that is usually not very hard to verify.  In particular, it follows from a stronger but more explicit condition
	$(k q^*)^2 \ll \hat q(\n) \log \|\n\|$; see the lower bound of Theorem~\ref{l:Qmatrix}(d).}
	\end{remark}

\subsection{Proof of Theorem \ref{Thm_sbm}}\label{S:proof_sbm}

In this section, we provide the proof  of  Theorem \ref{Thm_sbm}  based  on  two explicit probability estimates   for $\chi(\G)$ to satisfy the upper and the lower bound stated below.

\begin{thm}\label{T:lower}
	Let $\G \sim \SBM$,   where $P= (p_{ij})_{i,j\in [k]}$
	is such that $p_{ij}=p_{ji}$ and $0\le p_{ij}<1$ for all $i,j\in [k]$.	 Assume $\|\n\|\rightarrow \infty$ and   
	\eqref{ass_1} holds.
	Then,  for any $\eps>0$,
	\begin{equation}\label{eq_lower}
	\Pr\left(\chi(\G) < (1-\eps)\frac{ w_*(\n)}{2 \log  (q^*\|\n\|)) }\right)
	\leq  \exp\left( - \Omega\left( \frac{ \log(q^*\|\n\|)}{\max_{i,j\in [k]}q_{ij}} \right)\right).
	\end{equation}
\end{thm}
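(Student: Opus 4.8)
I would derive the lower bound from one high-probability event about the profiles of independent sets and then convert it deterministically using the definition~\eqref{def:mu-star} of $w_*$. For an independent set $I\subseteq V(\G)$ write its \emph{profile} $\m(I):=\bigl(|I\cap B_1|,\dots,|I\cap B_k|\bigr)\trans\in\N^k$, so $\zerovec\preceq\m(I)\preceq\n$. Fix $\eps>0$, put $W_0:=2(1+\eps)\log(q^*\|\n\|)$, and let $E$ be the event that $w(\m(I),Q)\le W_0$ for \emph{every} independent set $I\subseteq V(\G)$. On $E$, if $\chi(\G)=t$ is realised by a partition $V(\G)=I_1\cup\dots\cup I_t$ into independent sets, then $\sum_{\ell}\m(I_\ell)=\n$, so $\{\m(I_1),\dots,\m(I_t)\}$ is an admissible system in $\mathcal{F}(\n)$ and hence, by~\eqref{def:mu-star}, $\sum_{\ell}w(\m(I_\ell),Q)\ge w_*(\n,Q)$; as each summand is at most $W_0$ this gives $t\ge w_*(\n,Q)/W_0\ge(1-\eps)w_*(\n)/\bigl(2\log(q^*\|\n\|)\bigr)$. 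Hence $\{\chi(\G)<(1-\eps)w_*(\n)/(2\log(q^*\|\n\|))\}\subseteq E^{\,c}$, and everything reduces to bounding $\Pr(E^{\,c})$. (This is a per-colour-class sharpening of the weighted-independence-number bound from the abstract, the weighting being carried by the functional $w(\cdot,Q)$.)

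\textbf{First moment for $\Pr(E^{\,c})$.} Call $\yvec$ with $\zerovec\neq\yvec\preceq\n$ \emph{bad} if $\yvec\trans Q\yvec/\|\yvec\|>W_0$. By Theorem~\ref{l:Qmatrix}(b) the maximum defining $w(\m(I),Q)$ is attained at a corner $\yvec_0$ with $(\yvec_0)_i\in\{0,(\m(I))_i\}$; such $\yvec_0$ is integral and $\preceq\m(I)$, so if $w(\m(I),Q)>W_0$ then $\yvec_0$ is bad and the restriction of $I$ to the blocks on which $\yvec_0$ is non-zero is an independent set with profile exactly $\yvec_0$. Thus $E^{\,c}$ forces an independent set with a bad profile, and by Markov's inequality
\[
\Pr(E^{\,c})\ \le\ \sum_{\yvec\ \text{bad}}\ \E\bigl[\#\{\text{independent sets with profile }\yvec\}\bigr]\ =\ \sum_{\yvec\ \text{bad}}\ \prod_{i}\binom{n_i}{y_i}\exp\Bigl(-\tfrac12\yvec\trans Q\yvec+\tfrac12\sum_i y_iq_{ii}\Bigr).
\]
For bad $\yvec$ we have $\yvec\trans Q\yvec>W_0\|\yvec\|$, $\sum_i y_iq_{ii}\le q^*\|\yvec\|$, and $\|\yvec\|>W_0/\max_{i,j}q_{ij}$ (because $\yvec\trans Q\yvec\le\|\yvec\|^2\max_{i,j}q_{ij}$). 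Estimating the multinomial factor by Stirling and organising the sum by the size $s:=\|\yvec\|$, the sum becomes geometric-like and is dominated by its smallest term $s=\Theta(\log(q^*\|\n\|)/\max_{i,j}q_{ij})$, which yields $\Pr(E^{\,c})\le\exp\bigl(-\Omega(\log(q^*\|\n\|)/\max_{i,j}q_{ij})\bigr)$, as required.

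\textbf{Where the difficulty lies.} This last step is the real work, and it is a block-weighted version of the classical first-moment bound $\alpha(\G(n,p))\le(1+o(1))\,2\log(pn)/\log\tfrac1{1-p}$. One must show that $\prod_i\binom{n_i}{y_i}\exp\bigl(-\tfrac12\yvec\trans Q\yvec+\tfrac12 q^*\|\yvec\|\bigr)$ decays at a \emph{uniform} geometric rate in $\|\yvec\|$ over all bad $\yvec$, including highly unbalanced ones whose maximising corner has small components; concretely, the ``entropy'' $\log\prod_i\binom{n_i}{y_i}\le\sum_i y_i\log(en_i/y_i)$ must stay below the ``energy'' $\tfrac12\yvec\trans Q\yvec\ge\tfrac12 W_0\|\yvec\|$, and one must recover $\log(q^*\|\n\|)$ --- with the factor $q^*$ \emph{inside} the logarithm, not merely $\log\|\n\|$ --- which needs the relevant $y_i$ to be substantially large. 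Assumption~\eqref{ass_1} enters here precisely: it controls how far $\max_{i,j}q_{ij}$ can exceed $q^*$ and forces $\max_{i,j}q_{ij}\ll\log\|\n\|$, and I expect the verification that these two facts keep the exponent strictly negative for all $s>W_0/\max_{i,j}q_{ij}$ to be the bulk of the proof. Two routine points complete it: independent sets of size at most $W_0/\max_{i,j}q_{ij}$ satisfy $w(\m(I),Q)\le\|\m(I)\|\max_{i,j}q_{ij}\le W_0$ automatically and may be discarded from $E^{\,c}$, and the degenerate low-density regimes (where the asserted probability bound is $\ge1$ and hence vacuous) need only a short separate check.
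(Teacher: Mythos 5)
Your overall route matches the paper's: bound the probability that some independent set has a large profile weight $w(\m(I),Q)$, then deduce the lower bound on $\chi(\G)$ deterministically from the definition \eqref{def:mu-star} of $w_*$ applied to the colour-class profiles. The paper does exactly this, packaged through the parameter $\alpha_h(\G)$ (defined in \eqref{def:alpha_w}), the bound $w(\b(U))\le 2\alpha_h(\G)+q^*$ from \eqref{eq:phi-w}, and Lemma \ref{l:upper_gen}. Your reduction from $\chi(\G)$ to the event $E$ is correct, and the use of the corner maximiser to pass from a heavy $\m(I)$ to a bad \emph{integer} profile is the same observation the paper uses (via \eqref{eq:phi-w}).

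The gap is that the first-moment estimate you flag as ``the bulk of the proof'' is left unproved. It is a genuine missing step, but your diagnosis of where the difficulty lies is somewhat off, and this matters for how one should close the gap. You worry about controlling $\log\prod_i\binom{n_i}{y_i}$ block by block for highly unbalanced profiles and about recovering the factor $q^*$ inside the logarithm from an entropy--energy trade-off. In fact neither requires block-level accounting. The paper's Lemma \ref{l:upper_gen} simply bounds $\prod_i\binom{n_i}{y_i}\le\binom{\|\n\|}{s}$ with $s=\|\yvec\|$ (ignoring the block structure entirely), uses $\Pr(U\in\I(\G))\le e^{-ts}$ for all bad sets of size $s$, and then the single scalar estimate $\binom{\|\n\|}{s}e^{-ts}\le(e\|\n\|/(se^t))^s\le 2^{-s}$, valid once $s\,e^t\ge 6\|\n\|$. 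The $q^*$ inside the logarithm is therefore not extracted from the sum; it comes from \emph{choosing} the threshold $t=\log(q^*\|\n\|)$ (so $e^t=q^*\|\n\|$), combined with the a~priori lower bound $s\ge s_t\ge 2t/\max_{i,j}q_{ij}$ (your observation $\|\yvec\|>W_0/\max_{i,j}q_{ij}$). Assumption \eqref{ass_1} then guarantees $s_t\,e^t=2t\,q^*\|\n\|/\max_{i,j}q_{ij}\gg\|\n\|$, which is the verification you would need to supply. So the calculation is more elementary than you anticipate, but it is still the content of the proof, and your write-up asserts the geometric decay and the dominance of the smallest term without establishing it.
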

We  prove Theorem \ref{T:lower}  in Section~\ref{S:lower}.  
This  lower tail bound  follows from the existence of large weighted independent sets, similarly to arguments of Bollob\'as~\cite{Bollobas1988} and {\L}uczak~\cite{Luczak1991}.
 Comparing to the assumptions of  Theorem \ref{Thm_sbm}, we 
note that Theorem \ref{T:lower}  also applies for sparser graphs with $q^* < \|\n\|^{-\frac14}$,{because it does not require assumption  \eqref{ass_sbm} to hold.}

\begin{thm}\label{T:upper}
	Let $\G \sim \SBM$,   where $P = (p_{ij})_{i,j\in [k]}$ is such that $p_{ij}=p_{ji}$ and $0\le p_{ij}<1$  for all $i,j \in [k]$.   	Let $\sigma \in [0, \sigma_0]$ for some fixed $0<\sigma_0<\frac14$.  
	 Assume    that \eqref{ass_sbm} and \eqref{ass_2} hold.  
	Then,    for any $\eps>0$,
	\begin{equation}\label{eq_upper}
	\Pr\left( \chi(\G) >  \left(1 + \eps \right) \frac{w_*(\n)}{2 \log (q^*\|\n\|)} \right) \leq \exp \left(- \|\n\|^{2- 4\sigma  +o(1)}\right).
	\end{equation}
\end{thm}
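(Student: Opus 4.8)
The plan is to run the classical ``cover the graph by independent sets'' argument of Bollob\'as~\cite{Bollobas1988} and {\L}uczak~\cite{Luczak1991}, but with the colour classes chosen to be \emph{balanced} — each meeting block $B_i$ in a number of vertices proportional to the local block‑size vector — so that the procedure tracks the optimal decomposition behind $w_*(\n,Q)$. Write $L:=2\log(q^*\|\n\|)$. First I would use Theorem~\ref{l:Qmatrix}(f) to fix $\n=\yvec^{(1)}+\cdots+\yvec^{(m)}$ with $m\le k$ and $\sum_j w(\yvec^{(j)},Q)=w_*(\n,Q)$; splitting pieces further if necessary, I may assume each $\yvec^{(j)}$ is ``taut'', i.e.\ $(\yvec^{(j)})^{T} Q\,\yvec^{(j)}/\|\yvec^{(j)}\|=(1-o(1))\,w(\yvec^{(j)},Q)$. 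Partition $V(\G)=W_1\cup\cdots\cup W_m$ with $|W_j\cap B_i|$ a rounding of $y^{(j)}_i$, so $\chi(\G)\le\sum_j\chi(\G[W_j])$, and it suffices to colour each $\G[W_j]$ with $(1+o(1))\,w(\yvec^{(j)},Q)/L$ colours plus a negligible remainder.

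The probabilistic heart is an existence statement for balanced independent sets, to hold uniformly over induced subgraphs. Fix $j$, a shrinking factor $c\in(0,1]$, and a set $U\subseteq W_j$ with $|U\cap B_i|=(1+o(1))c\,y^{(j)}_i$ for all $i$; let $X$ count the sets $I\subseteq U$ with $|I\cap B_i|=\big\lfloor(1-o(1))L\,y^{(j)}_i/w(\yvec^{(j)},Q)\big\rfloor$ that are independent in $\G[U]$, and put $s:=|I|=(1-o(1))L\|\yvec^{(j)}\|/w(\yvec^{(j)},Q)$, which is independent of $c$. A Stirling‑formula first‑moment computation as in \eqref{w-intuition}, using tautness of $\yvec^{(j)}$ and using \eqref{ass_sbm}--\eqref{ass_2} to absorb the lower‑order terms coming from $k$, $q^*$, $\hat q(\n)$, the off‑diagonal $q_{ij}$ and the roundings, shows that $\E X$ is super‑polynomially large. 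Janson's inequality then gives $\Pr(X=0)\le\exp\big(-\Omega((\E X)^2/\Delta)\big)$, and the dominant term of $\Delta$ — pairs of balanced sets overlapping in exactly two vertices — yields $(\E X)^2/\Delta=\Omega(|U|^2/s^4)$. Since $s=\|\n\|^{\sigma+o(1)}$ by \eqref{ass_sbm}, this reads $\Pr(X=0)\le\exp\big(-\Omega(|U|^2\|\n\|^{-4\sigma+o(1)})\big)$.

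Next I would take the union bound of these failure events over all vertex subsets $U$ (there are at most $2^{\|\n\|}$, and every induced subgraph that can arise while greedily deleting balanced independent sets is one of them): for $U$ of size $\Omega(\|\n\|)$ the failure probability is at most $\exp(-\|\n\|^{2-4\sigma+o(1)})$, and since $\sigma\le\sigma_0<1/4$ we have $2-4\sigma>1$, so this beats $2^{\|\n\|}$ and yields the stated probability $\exp(-\|\n\|^{2-4\sigma+o(1)})$ — this is exactly where the hypothesis $\sigma_0<1/4$ is used. On the resulting good event, colour each $\G[W_j]$ by repeatedly deleting a balanced independent set of the guaranteed shape and size; because the block proportions inside $W_j$ stay $\propto\yvec^{(j)}$ throughout (up to a slowly growing drift absorbed into the $o(1)$'s) and each deletion removes $(1-o(1))L\|\yvec^{(j)}\|/w(\yvec^{(j)},Q)$ vertices, the number of colours spent on $\G[W_j]$ is $(1+o(1))\,w(\yvec^{(j)},Q)/L$, and summing over $j$ gives $(1+o(1))\,w_*(\n,Q)/L$. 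Finally colour the leftover vertices one colour each — and, if a single pass does not shrink the leftover below the polynomial scale at which the union bound breaks down, iterate the whole argument on the leftover; assumption \eqref{ass_2} guarantees $w_*(\n,Q)$ is large enough that this contributes only $o(w_*(\n,Q)/\log\|\n\|)$ colours, whence $\chi(\G)\le(1+\eps)\,w_*(\n,Q)/L$ with the claimed probability.

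The step I expect to be the main obstacle is the probabilistic heart: obtaining ``$\E X$ super‑polynomial'' and, crucially, the clean bound $(\E X)^2/\Delta=\Omega(|U|^2/s^4)$ for \emph{balanced} independent sets \emph{uniformly} over all the subgraphs $U$ entering the union bound — that is, controlling every lower‑order error term (from $k$, $q^*$, $\hat q(\n)$, the off‑diagonal entries, the roundings, and the drift of block proportions) tightly enough that it survives, which is precisely what forces the technical hypotheses \eqref{ass_sbm}, \eqref{ass_1}, \eqref{ass_2}. A secondary but genuine difficulty is the endgame above — making the leftover truly negligible while keeping $|U|$ large enough for Janson's inequality to bite — and it is presumably this tension, rather than anything essential, that pins the threshold at $1/4$.
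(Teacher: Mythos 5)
Your overall architecture matches the paper's: use the optimal decomposition behind $w_*(\n,Q)$ to split $\G$ into at most $k$ vertex-disjoint sub-model pieces, cover each piece by balanced independent sets found via a first-and-second-moment (Janson) argument with a union bound over all residual subsets, and pay a negligible price for the leftover. The identification of $\sigma_0<\frac14$ with the inequality $2-4\sigma>1$ needed to beat the $2^{\|\n\|}$ union bound is also correct, and the $(\E X)^2/\Delta$ heuristic agrees with the paper's Lemma~\ref{L:janson}. However, there are three genuine gaps.

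\emph{The endgame is the main gap.} After the greedy Janson phase you are left with $\Theta(\|\n\|/\log^2\|\n\|)$ vertices (this is the $\theta$-threshold in the paper's Lemma~\ref{L:upper}). Colouring these one-per-vertex costs $\Theta(\|\n\|/\log^2\|\n\|)$ colours, whereas the target budget is $\Theta(w_*(\n)/\log\|\n\|)=\|\n\|^{1-\sigma+o(1)}$; for any fixed $\sigma>0$ this is a strictly smaller power of $\|\n\|$, so the ``one colour each'' fallback is too expensive. The ``iterate the whole argument on the leftover'' fallback is also not clearly convergent: the leftover's $w_*$ is bounded only by $q^*\cdot|\mathrm{leftover}| = q^*\|\n\|/\log^2\|\n\|$, and since $w_*(\n)$ and $q^*\|\n\|$ differ only by a $\|\n\|^{o(1)}$ factor under \eqref{ass_sbm}, the second-round colour count $q^*\|\n\|/(\log^2\|\n\|\cdot\log(q^*\|\n\|))$ need not be $o(w_*(\n)/\log\|\n\|)$. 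The paper closes this by a separate deterministic-style ingredient (Theorem~\ref{T:uppercrude}): colour each block of the leftover using the crude estimate for a binomial random graph together with Lemma~\ref{L:density}, which gives roughly $\hat q(\uvec)\|\uvec\|/\log(q^*\|\n\|)$ colours; the $n_0$ buffer in Lemma~\ref{L:upper2} and assumption \eqref{ass_2} are exactly tuned so that this is at most $\eps\cdot\frac{w(\n)}{\log(q^*\|\n\|)}$ plus a negligible remainder. That crude block-wise bound is the missing idea in your proposal.

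\emph{Small pieces are not addressed.} Some optimal pieces $\yvec^{(j)}$ may have $w(\yvec^{(j)},Q)$ as small as $O(\log\|\n\|)$, for which ``$(1+o(1))w(\yvec^{(j)},Q)/L=O(1)$ colours'' is meaningless and Lemma~\ref{L:janson}-type machinery cannot be applied (its hypotheses $w(\n)\ge\|\n\|^{1-\sigma}$ and $n_*=\|\n\|^{1+o(1)}$ fail). The paper singles out the set $T_{\mathrm{small}}$ and bounds $\sum_{t\in T_{\mathrm{small}}}\chi(\G^{(t)})$ directly using Theorem~\ref{T:uppercrude}, showing it contributes $o(w_*(\n)/\log(q^*\|\n\|))$ colours.

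\emph{The ``tautness'' step is unjustified and unnecessary.} From Theorem~\ref{l:Qmatrix}(f) you get $w_*(\yvec^{(j)})=w(\yvec^{(j)})$ for each piece, but that does not imply $(\yvec^{(j)})^TQ\yvec^{(j)}/\|\yvec^{(j)}\|=(1-o(1))w(\yvec^{(j)})$: the corner maximiser of $w(\yvec^{(j)})$ may be a strict sub-vector, and further splitting to force tautness can only increase $\sum_j w(\cdot)$ (so it contradicts optimality rather than preserving it). Fortunately you do not need tautness: the first-moment computation in Lemma~\ref{L:janson} only uses the \emph{upper} bound $\l^TQ\l/(2\|\l\|)\le\nu w(\n)/2$, which is the scaling property, not an identity. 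Finally, the paper uses part (g) (near-optimal \emph{integer} system) rather than (f), precisely to avoid the rounding bookkeeping you sweep into ``a rounding of $y^{(j)}_i$''; with (f) alone one must separately argue the rounding error is $O(k^2q^*)$, which is what (g) packages.
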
    
We  prove Theorem \ref{T:upper}  in Section~\ref{S:upper}, using/extending some results and standard arguments on the chromatic numer of the classical binomial random graph.  
Comparing to the assumptions of  Theorem \ref{Thm_sbm}, we 
note that Theorem \ref{T:upper}    allows more variation in the off-diagonal probabilities $p_{ij}$,{because it does not require assumption  \eqref{ass_1} to hold}.

Now, we are ready to  prove  Theorem \ref{Thm_sbm}.  
\begin{proof}[Proof of  Theorem \ref{Thm_sbm}.]
	Using the assumption $q^*  = \|\n\|^{-\sigma+o(1)}$ from \eqref{ass_sbm},
	we observe  that 
	\[
	    \log(q^*\|\n\|) = (1+o(1)) (1-\sigma) \log\|\n\|. 
	\]
	Note that  all assumptions of  Theorems \ref{T:lower}  and    \ref{T:upper} hold
{	as they appear as the assumptions of    Theorem \ref{Thm_sbm}.}
	Also,  the quantities on the right hand sides of \eqref{eq_lower} and \eqref{eq_upper} satisfy
	\[
		\frac{ \log(q^*\|\n\|)}{\max_{i,j\in [k]}q_{ij}} \rightarrow \infty  \qquad \text{and} \qquad \|\n\|^{2- 4\sigma} \rightarrow \infty. 
	\]	
	Thus, applying Theorems \ref{T:lower}  and    \ref{T:upper}, we  get that, for any fixed $\eps>0$,
	whp
	\[
	       (1-\eps) \frac{w_*(\n)}{2 (1-\sigma)\log \|\n\|} \leq \chi(\G) \leq  (1+\eps) \frac{w_*(\n)}{2 (1-\sigma)\log \|\n\|}.  
	\]
	This completes the proof. 
\end{proof}


\subsection{Properties of $w(\cdot,Q)$ and $w_*(\cdot,Q)$}

In this section, we collect {some} facts about the quantaties $w(\cdot) = w(\cdot,Q)$ 
and  $ w_*(\cdot) =  w_*(\cdot,Q)$, defined by \eqref{w_def} and \eqref{def:mu-star}  for a general matrix $Q$.    
These properties are helpful for applications of Theorem \ref{Thm_sbm} and will also be repeatedely used in the proofs.

\begin{thm}\label{l:Qmatrix}
	Let $Q = (q_{ij})_{i,j \in [k]}$ be a  symmetric $k\times k$ matrix with non-negative entries.  
	Let $q^*$ and $\hat{q}(\cdot)$ be defined according \eqref{def:q-star}.
	Then, the following hold for any $\x=(x_1,\ldots,x_k)^T\in \pReals^k$.
	\begin{itemize}
		\item[\rm (a)]{{\rm[Scaling and monotonicity].}}	            If $\xvec' \in \pReals^k$ and $ \xvec'   \preceq s\xvec$ for some $s>0$, then
		$w(\xvec') \leq s w(\xvec)$ and $w_*(\xvec') \leq  s w_*(\xvec)$. 
		In particular, $w(s\xvec) = sw(\xvec)$ and   $w_*(s\xvec) = s w_*(\xvec)$. 
		\item[\rm(b)]{{\rm [Corner maximiser].}} There is $\zvec = (z_1,\ldots, z_k)^T$ with $z_i \in \{0,x_i\}$ for all $i\in [k]$
		such that 
		 \[{\dfrac{\zvec^T \, Q\,  \zvec}{\|\zvec\|} = w(\xvec):=\max_{\boldsymbol{0} \preceq \yvec \preceq \xvec}\ \dfrac{\yvec^T\, Q\, \yvec}{ \|\yvec\|}.}\]

		  \item[\rm(c)]{{\rm[Pseudodefinite property].}}  If $\yvec^T Q\yvec \geq 0$  for all $\yvec \in \Reals^k$ with $\sum_{i\in [k]}y_i =0$, then
		         	 \[{
		         	 w(\xvec) = w_*(\xvec):=  \inf_{\calS \in \mathcal{F}(\xvec)} \  \sum_{ \yvec \in \calS}\ w(\yvec,Q)}.
		         	 \]

		\item[\rm(d)]   {{\rm[Upper and lower bounds].}}  
		We have
		\[ 
		q^* \|\xvec\| \geq \hat{q}(\xvec) \|\xvec\|    \geq  w_*(\xvec) \geq  \frac{\left(\hat{q}(\xvec) \right)^2}{\sum_{i \in [k]}q_{ii}}\|\xvec\| \geq   \frac{\left(\hat{q}(\xvec) \right)^2}{kq^*}\|\xvec\|, 
		\]
		where  the lower bounds for $w_*(\xvec)$ hold  under the additional condition  that $q^*>0$.

		\item[\rm(e)]	{{\rm[Triangle inequality].}}  For any $\xvec' \in \pReals^k$, we have  
				$
		w_*(\xvec) + w_*(\xvec') \geq w_*(\xvec+\xvec').  
		$ 
	  

		\item[\rm(f)] {{\rm[Minimal system of $k$ vectors].}}  There exists a system of vectors $(\x^{(t)})_{t \in [k]}$,  each from  $ \pReals^k $,
		such that 
		$\sum_{t \in [k]} \x^{(t)} = \x$    	
		and  $\sum_{t \in [k]}  w(\xvec^{(t)}) = w_*(\xvec)$. 
		
		\item[\rm(g)]{{\rm[Near-optimal integer system]}.} If $\xvec \in \Naturals^k $ then  there exists a system of vectors $(\x^{(t)})_{t \in [k]}$,  each from  $ \Naturals^k $,
		such that 
		$\sum_{t \in [k]} \x^{(t)} = \x$    	 and 
		{ $\sum_{t \in [k]}  w(\xvec^{(t)}) \leq w_*(\xvec)      + k^2 q^*$.}  
	\end{itemize}
\end{thm}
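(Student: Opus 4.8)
I would prove the seven items in the order (a), (e), (b), (c), (d), (f), (g), each using only the earlier ones; the content sits in the structural claim (b) and the rounding claim (g). Parts (a) and (e) are bookkeeping: for (a) the point is that $\|\yvec\|=\one^T\yvec$ on $\pReals^k$, so $\yvec\mapsto\frac{\yvec^TQ\yvec}{\|\yvec\|}$ is positively homogeneous of degree $1$; combining this with the inclusion $\{\zerovec\preceq\yvec\preceq\xvec'\}\subseteq\{\zerovec\preceq\yvec\preceq s\xvec\}$ (and, for $w_*$, with a coordinate-by-coordinate splitting of $\xvec'$ across any system for $\xvec$) gives $w(\xvec')\le s\,w(\xvec)$ and $w_*(\xvec')\le s\,w_*(\xvec)$; the equalities for $\xvec'=s\xvec$ follow by running the argument with $s^{-1}$ as well, and monotonicity is the case $s=1$. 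Part (e) is immediate, since the multiset union of a system for $\xvec$ and one for $\xvec'$ is a system for $\xvec+\xvec'$.

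\textbf{The structural core: (b).} We may assume $\xvec\neq\zerovec$ and $w(\xvec)>0$, since otherwise $\zvec=\xvec$ already works ($Q$ has nonnegative entries, so $\xvec^TQ\xvec\ge0$). Among all maximisers of $\frac{\yvec^TQ\yvec}{\|\yvec\|}$ over the box $[\zerovec,\xvec]$, pick $\yvec^*$ with the fewest \emph{free} coordinates $N:=\{i:0<y_i^*<x_i\}$, and suppose for contradiction $N\neq\emptyset$. Differentiating in a free coordinate (both signs are admissible) gives the first-order condition $(Q\yvec^*)_i=\tfrac12 w(\xvec)$ for all $i\in N$; substituting this, for any direction $\boldsymbol d$ supported on $N$ the restriction of the ratio to $\yvec^*+t\boldsymbol d$ simplifies to $w(\xvec)+\frac{(\boldsymbol d^TQ\boldsymbol d)\,t^2}{\one^T(\yvec^*+t\boldsymbol d)}$, so the second-order condition at $t=0$ forces $\boldsymbol d^TQ\boldsymbol d\le0$. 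Hence the principal submatrix $Q|_N$ is negative semidefinite; but it has nonnegative diagonal, and a symmetric negative semidefinite matrix with nonnegative diagonal must be $0$ (test against $\evec_i$ and $\evec_i\pm\evec_j$). Then the displayed restriction is constant in $t$, so for any $i\in N$ we can move $\yvec^*$ along $\evec_i$ until coordinate $i$ hits $0$ or $x_i$, producing a maximiser with fewer free coordinates — contradicting the choice of $\yvec^*$. Therefore $N=\emptyset$, i.e.\ $\yvec^*$ is a corner.

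\textbf{Corollaries: (c), (d), (f).} For (c) it suffices to show that the pseudodefinite hypothesis makes $w$ subadditive on $\pReals^k$, because $w_*\le w$ always (take $\calS=\{\xvec\}$), and then $w(\xvec)\le\sum_{\yvec\in\calS}w(\yvec)$ for every $\calS\in\calF(\xvec)$ forces $w\le w_*$. For $\uvec,\vvec\succeq\zerovec$, split the maximiser on $[\zerovec,\uvec+\vvec]$ as $\yvec_1+\yvec_2$ with $\yvec_1\preceq\uvec$, $\yvec_2\preceq\vvec$; with $\alpha=\|\yvec_1\|$, $\beta=\|\yvec_2\|$, the inequality $\tfrac{\yvec_1^TQ\yvec_1}{\alpha}+\tfrac{\yvec_2^TQ\yvec_2}{\beta}\ge\tfrac{(\yvec_1+\yvec_2)^TQ(\yvec_1+\yvec_2)}{\alpha+\beta}$ reduces after clearing denominators to $(\beta\yvec_1-\alpha\yvec_2)^TQ(\beta\yvec_1-\alpha\yvec_2)\ge0$, which holds because $\one^T(\beta\yvec_1-\alpha\yvec_2)=0$; induction on $|\calS|$ then gives (c). For (d): the axis system $\{x_1\evec_1,\dots,x_k\evec_k\}$ yields $w_*(\xvec)\le\sum_i q_{ii}x_i=\hat q(\xvec)\|\xvec\|\le q^*\|\xvec\|$, while for the lower bound (assuming $q^*>0$) one estimates, for any $\calS$, $\sum_{\yvec\in\calS}w(\yvec)\ge\sum_\yvec\tfrac{\sum_i q_{ii}y_i^2}{\|\yvec\|}\ge\sum_\yvec\tfrac{(\sum_i q_{ii}y_i)^2}{(\sum_i q_{ii})\|\yvec\|}\ge\tfrac{(\sum_i q_{ii}x_i)^2}{(\sum_i q_{ii})\|\xvec\|}=\tfrac{\hat q(\xvec)^2\|\xvec\|}{\sum_i q_{ii}}\ge\tfrac{\hat q(\xvec)^2\|\xvec\|}{kq^*}$, by Cauchy--Schwarz first over $i$ and then over $\yvec\in\calS$, together with $\sum_\yvec\sum_i q_{ii}y_i=\sum_i q_{ii}x_i$ and $\sum_\yvec\|\yvec\|=\|\xvec\|$. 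For (f): $w$ is continuous on $\pReals^k$ (e.g.\ by the Berge maximum theorem) and positively homogeneous by (a), and homogeneity identifies $w_*$ with the convex envelope of $w$; rescaling to the compact $(k-1)$-dimensional simplex $\{\uvec\succeq\zerovec:\|\uvec\|=1\}$ makes the envelope attained (convex hull of a continuous function on a compact set), and Carathéodory applied on a supporting hyperplane at the relevant lower-boundary point produces an optimal representation using at most $k$ vectors — pad with zero vectors to get exactly $k$.

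\textbf{The remaining obstacle: (g).} Here I would take the optimal real system $(\yvec_t)_{t\in[k]}$ from (f) and round each $\yvec_t$ coordinate-wise to an integer vector $\xvec^{(t)}$ with $\lfloor\yvec_t\rfloor\preceq\xvec^{(t)}\preceq\lceil\yvec_t\rceil$ and $\sum_t\xvec^{(t)}=\xvec$; this is possible one coordinate at a time because the column sums $\sum_t(\yvec_t)_i=x_i$ are integers (standard apportionment). It then remains to bound $\sum_t\bigl(w(\xvec^{(t)})-w(\yvec_t)\bigr)$ by $k^2q^*$, and I expect \emph{this} to be the main difficulty: a crude Lipschitz estimate for $w$ costs $\max_{i,j}q_{ij}$ per unit of perturbation and is far too lossy. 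The way around it should be to exploit the corner structure from (b) — each maximiser may be taken at a corner $\zvec_t$ of $[\zerovec,\yvec_t]$ with $w(\zvec_t)=w(\yvec_t)$, so that $w$ is constant on the whole interval $[\zvec_t,\yvec_t]$ — to confine the effect of rounding a vector to its active coordinates, and to invoke the diagonal estimate $w_*(\evec_i)\le q_{ii}\le q^*$ from (d) to show that integerising one vector and absorbing the $O(k)$ leftover units of mass that must be shifted among the $k$ vectors costs $O(kq^*)$; summing over $t\in[k]$ then gives $w_*(\xvec)+k^2q^*$.
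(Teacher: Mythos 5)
Your treatment of (a), (c), (d), (e) matches the paper's in substance. In (c), your "clearing denominators" computation showing $\frac{\yvec_1^TQ\yvec_1}{\alpha}+\frac{\yvec_2^TQ\yvec_2}{\beta}-\frac{(\yvec_1+\yvec_2)^TQ(\yvec_1+\yvec_2)}{\alpha+\beta}=\frac{1}{\alpha\beta(\alpha+\beta)}(\beta\yvec_1-\alpha\yvec_2)^TQ(\beta\yvec_1-\alpha\yvec_2)$ is exactly the paper's Lemma~\ref{l:identity}, and the double Cauchy--Schwarz for (d) is the same. For (b) your argument is correct but heavier than necessary: the paper simply fixes all coordinates of the maximiser except $y_i$ and notes that the ratio as a function of $y_i$ alone has the form $q_{ii}(y_i-a)+b/(y_i+c)$, hence is either strictly increasing (if $b\le0$, which forces $q_{ii}>0$ because the maximum value is positive) or strictly convex (if $b>0$); in both cases $y_i\in\{0,x_i\}$, and iterating over $i$ gives the corner. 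This avoids your multivariate second-order analysis, the reduction $Q|_N\preceq0\Rightarrow Q|_N=0$, and the induction on the number of free coordinates. For (f) your convex-envelope-plus-Carathéodory argument (with the boundary refinement giving $k$ rather than $k+1$ points after passing to the simplex) is a valid but genuinely different route; the paper's is more elementary: for $\ell>k$, take a linear dependence $\sum_t c^{(t)}\xvec^{(t)}=\zerovec$ in an optimal $\ell$-system, observe that by homogeneity of $w$ the objective along $\xvec^{(t)}\mapsto(1+\eps c^{(t)})\xvec^{(t)}$ is affine in $\eps$ and minimised at $\eps=0$, hence constant, so one vector can be scaled to $\zerovec$ at no cost. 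Your route is conceptually appealing but needs a little care about attainment of the envelope and the exact form of boundary Carathéodory; the paper's is self-contained.

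Part (g) is where your proposal has a genuine gap, and you flag this yourself. The apportionment step is fine, but the missing estimate $\sum_t\bigl(w(\xvec^{(t)})-w(\yvec^{(t)})\bigr)\le k^2q^*$ does not follow from the ideas you sketch: rounding a coordinate of $\yvec^{(t)}$ \emph{up} by one unit can shift the corner maximiser of the larger box onto that coordinate, and the resulting increase in $w$ is governed by the off-diagonal entries $q_{ij}$, which (unlike the diagonal) are not controlled by $q^*$; the "constant on $[\zvec_t,\yvec_t]$" observation does not help because $\xvec^{(t)}$ need not lie in that interval, and the estimate $w_*(\evec_i)\le q_{ii}\le q^*$ from (d) bounds $w_*$, not $w$, which is not subadditive. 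The paper takes a different tack: it sets $\xvec^{(t)}:=\lfloor\yvec^{(t)}\rfloor$ (so $\sum_t w(\xvec^{(t)})\le w_*(\xvec)$ is immediate from monotonicity), derives $w(\yvec^{(t)})\le w_*(\xvec^{(t)})+kq^*$ from (e) and (d), and then "increases some components of $\xvec^{(t)}$" to restore $\sum_t\xvec^{(t)}=\xvec$. But as written that last step is not accompanied by an estimate of the increase in $\sum_t w(\xvec^{(t)})$ it causes, and the displayed inequality $\sum_t w(\xvec^{(t)})\ge w_*(\xvec)-k^2q^*$ is a lower bound rather than the required upper bound — so even the paper's argument is more delicate than its brevity suggests. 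Either way, your proposal does not prove (g), and the missing ingredient is precisely a controlled way to reassign the $O(k^2)$ leftover integer units among the $k$ vectors at a cost of at most $q^*$ per unit in the $w$-objective.
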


The proof  of Theorem \ref{l:Qmatrix}  is technical and not very insightful, but for completeness it is provided at the end of the paper in Section \ref{S:Qmatrix}.  

\begin{remark} 
	 An interesting question not covered in this paper is how to  compute or at least approxiamte $w_*(\cdot)$ efficiently.  We give some examples in Section \ref{S:application}, but the question remains open in  general.  We believe that  {the  optimization problems of finding $w_*(\cdot)$ and $w(\cdot)$
	 	can be efficiently solved by fast converging iterative methods such as gradient descent and analogues of the simplex method.}
	  \end{remark}


\subsection{Structure of the rest of the paper}
 
 Section 3 covers applications of our main result, Theorem~\ref{Thm_sbm}.  
 We consider first the case of two blocks  in detail and then prove Theorems \ref{T:perc} and~\ref{T:main4}.  In addition, we study the unions of two independent random graphs from the stochastic block model. 
 In Section 4, we introduce the weighted independence number and prove the lower tail probability estimate of Theorem~\ref{T:lower}.
 Sections 5 and 6 are devoted to the upper tail probability estimate of Theorem \ref{T:upper}. 
 In Secton~5 we derive some preliminary estimates based on idea of separately colouring the blocks  of $\M(\n,P)$. In Section~6, we derive  an asymptotically optimal bound on the chromatic number using the estimates for the existence  of large weighted independent sets given in Section 4.2.
 Finally, we prove Theorem \ref{l:Qmatrix} in Section \ref{S:Qmatrix}.


\section{Applications of the main theorem}\label{S:application}
In this section we discuss some applications of Theorem~\ref{Thm_sbm}. 
Specifically, we consider  the case of  two blocks (Section~\ref{S:two}),  
the union of two independent random graphs from the stochastic  block model (Section~\ref{T:indun}),   
 percolations on a blow-up graph (Section~\ref{S:perc}), and  Chung-Lu model and its complement (Section~\ref{S:main4}).

\subsection{Two blocks}\label{S:two}

Consider the random graph $\G \sim \M(\n,P)$ with two blocks, where 
\[
k=2, \qquad
 \n = (n_1,n_2)^T \in \Naturals^2, \qquad   P =  \begin{bmatrix}
    p_{11}       & p_{12} \\
    p_{12}       & p_{22} 
\end{bmatrix} \quad \text{with}\ p_{12}=p_{21}.
\]
Let $B_1$ and $B_2$ denote the two blocks of $\G$, i.e., a partition of  the vertex set $V(\G)$, and let $\G_1:= \G[B_1] \sim\M(n_1,p_{11})$ and $\G_2:=\G[B_2]  \sim\M(n_2,p_{22})$ denote the  induced subgraphs of $\G \sim \SBM$ on $B_1$ and $B_2$, respectively.
Since $B_1$ and $B_2$ are disjoint, we have
\begin{equation}\label{eq:123}
	 \max \{\chi(\G_1), \chi(\G_2)\}\leq   \chi(\G) \leq \chi(\G_1)+\chi(\G_2). 
\end{equation}
For fixed  $p_{11},p_{22} \in (0,1)$, Martinsson et al. observed in  \cite[Section 4.1]{MPSM2020} that 
there are two threshold values $\underline{p}$ and $\overline{p} $ such that whp $\chi(\G)$ 
is asymptotically equal to the lower bound  of \eqref{eq:123}  if  $p_{12} \leq  \underline{p}$, but it is equal to the upper bound of \eqref{eq:123} if  $p_{12} \geq \overline{p} $.  Using Theorem \ref{Thm_sbm}, we extend this result to non-fixed  $p_{11} =p_{11}(n)$ or $p_{22}=p_{22}(n)$ that are allowed to vanish asymptotically. We also obtain the asymptotic formula for $\chi(\G)$ when $ \underline{p} \leq p_{12} \leq \overline{p}$.

To state our results, define  
\begin{align*}
		  &\overline{p} =  \overline{p} (p_{11},p_{22}) := 1 - (1-p_{11})^{\frac 12}(1-p_{22})^{\frac12},
\\
	&\underline{p}   =  \underline{p}(\n, p_{11},p_{22}) :=  1 - \min\left\{
	(1-p_{11})^{\frac12}\cdot {(1-p_{22})^{-\frac{n_2}{2n_1}}},  
	(1-p_{22})^{\frac12}\cdot {(1-p_{11})^{-\frac{n_1}{2n_2}}}\right\}.
\end{align*}
Obviously, $1 \geq \overline{p} \geq \underline{p}$ since $p_{11},p_{22} \in (0,1)$. 
Observe also $ \underline{p}\geq 0$ since
\begin{align*}
	\min\left\{
	\frac{(1-p_{11})^{\frac12}}{(1-p_{22})^{\frac{n_2}{2n_1}}},  
	\frac{(1-p_{22})^{\frac12}}{(1-{p_{11}})^{\frac{n_1}{2n_2}}}\right\} 
	\leq 
	 \left(
	     \frac{(1-p_{11})^{\frac12}}{(1-p_{22})^{\frac{n_2}{2n_1}}}
	 \right)^{\frac{n_1}{n_1+n_2}}
	 \left(
	     \frac{(1-p_{22})^{\frac12}}{(1-p_{11})^{\frac{n_1}{2n_2}}}
	 \right)^{\frac{n_2}{n_1+n_2}}
	  =1.
\end{align*}	
Recall from \eqref{Q_def} that  $Q = Q(P) ={(q_{ij})_{i,j \in \{1,2\}}}$ is defined by 
$
	q_{ij} :=  \log (\frac{1}{1-p_{ij}}).
$
\begin{thm}\label{C:two} 
Let   $\sigma \in [0, \sigma_0]$ for some fixed $0<\sigma_0 <\frac14$.  
Assume that   
\[
\|\n\| \rightarrow \infty, \qquad  
	  q_{11} = \|\n\|^{-\sigma+o(1)}, \qquad q_{22}  =  \|\n\|^{-\sigma+o(1)},
	\qquad \frac{q_{11}^2 }{q_{22}} +\frac{q_{22}^2 }{q_{11}} \ll \log \|\n\|.
\]
 Then the following hold whp.
  \begin{itemize}
  	  \item[(i)] If  $ \underline{p} \leq p_{12} \leq \overline{p}$ then
    \[
         \chi(\G) =
  		(1 + o(1)) \frac{ \n^T Q \n  }{ 2 (1-\sigma) \|\n\| 
  		\log \|\n\|} . 
      \]     
             \item[(ii)] If  $\overline{p} \leq p_{12} \leq 1$  then
          \[
               \chi(\G) 
 	 = 
              (1 + o(1)) \left( \chi(\G_1) + \chi(\G_2)\right)
              = (1 + o(1))  \frac{
  		  n_1 q_{11} +
  		 n_2 q_{22}     
            }{2 (1-\sigma) \log \|\n\|}.
          \]        
       \item[(iii)]  If  $0 \leq p_{12} \leq \underline{p}$  then
    \[
               \chi(\G) =
            (1 + o(1)) \max \left\{  \chi(\G_1) ,   \chi(\G_2) \right\}
=  		(1 + o(1))  \frac{\max \left\{
  		  n_1 q_{11} ,
  		 n_2 q_{22}            \right\}
            }{2 (1-\sigma) \log \|\n\|}.
         \]       
    \end{itemize}
\end{thm}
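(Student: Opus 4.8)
\textbf{Proof plan for Theorem~\ref{C:two}.} The strategy is to reduce everything to Theorem~\ref{Thm_sbm}, so the main task is to compute $w_*(\n,Q)$ explicitly for the $2\times 2$ matrix $Q$ in each of the three regimes of $p_{12}$, and to verify that the technical assumptions \eqref{ass_sbm}, \eqref{ass_1}, \eqref{ass_2} follow from the stated hypotheses. First I would record the consequences of the hypotheses: since $q_{11},q_{22} = \|\n\|^{-\sigma+o(1)}$, we have $q^* = \max\{q_{11},q_{22}\} = \|\n\|^{-\sigma+o(1)}$ and $\hat q(\n) = (n_1 q_{11}+n_2 q_{22})/\|\n\| = \|\n\|^{-\sigma+o(1)}$, so the density assumptions in \eqref{ass_sbm} hold with $k=2$ fixed. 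Assumption \eqref{ass_1} needs $(1+1/q^*)\max_{ij} q_{ij} \ll \log\|\n\|$; here $\max_{ij}q_{ij}$ could be $q_{12}$, which in case (ii)–(iii) can be large (up to $\log\|\n\|$ order since $p_{12}$ may approach $1$), so I need to check the bound $q_{12}/q^* \ll \log\|\n\|$ carefully — but in fact one only ever needs $w_*$, and on the ranges where $p_{12}\geq\overline p$ the off-diagonal term drops out of $w_*$, so I should instead invoke Theorem~\ref{T:lower} and Theorem~\ref{T:upper} separately (each omits one of \eqref{ass_1}, \eqref{ass_2}) exactly as in the proof of Theorem~\ref{Thm_sbm}, noting the remark after Theorem~\ref{T:upper} that it permits unbounded off-diagonal probabilities. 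Finally \eqref{ass_2} follows from the explicit sufficient condition $(kq^*)^2 \ll \hat q(\n)\log\|\n\|$ from Theorem~\ref{l:Qmatrix}(d), which with $k=2$ reads $4(q^*)^2 \ll \hat q(\n)\log\|\n\|$; since $(q^*)^2 = \|\n\|^{-2\sigma+o(1)}$ and $\hat q(\n)\log\|\n\| = \|\n\|^{-\sigma+o(1)}$ this holds because $\sigma<1$, though I should double-check the borderline using the stronger hypothesis $\frac{q_{11}^2}{q_{22}}+\frac{q_{22}^2}{q_{11}} \ll \log\|\n\|$ which is exactly what pins down \eqref{ass_2} in the delicate case.

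The core computation is $w_*(\n,Q)$ for $\n=(n_1,n_2)^T$. By Theorem~\ref{l:Qmatrix}(f) the infimum is attained on a system of at most two vectors, and by scaling (Theorem~\ref{l:Qmatrix}(a)) and the structure of the problem, the only candidate decompositions of $\n$ are the trivial one $\{\n\}$, giving $w(\n)$, and the "split" $\{(n_1,0)^T,(0,n_2)^T\}$, giving $w((n_1,0)^T)+w((0,n_2)^T) = n_1 q_{11} + n_2 q_{22}$. For the first, $w(\n) = \max_{\zerovec\preceq\yvec\preceq\n} \yvec^T Q\yvec/\|\yvec\|$, and by the corner maximiser Theorem~\ref{l:Qmatrix}(b) the max is over $\yvec\in\{(n_1,n_2)^T,(n_1,0)^T,(0,n_2)^T\}$ (the origin gives $0$), i.e. $w(\n)=\max\{\n^T Q\n/\|\n\|,\ n_1 q_{11},\ n_2 q_{22}\}$. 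So $w_*(\n) = \min\{\max\{\n^T Q\n/\|\n\|, n_1q_{11}, n_2q_{22}\},\ n_1q_{11}+n_2q_{22}\}$. The three cases are then separated by comparing $\n^T Q\n/\|\n\|$ with $n_1q_{11}$, $n_2 q_{22}$ and $n_1q_{11}+n_2q_{22}$: expanding $\n^T Q\n = n_1^2 q_{11} + 2n_1 n_2 q_{12} + n_2^2 q_{22}$, the inequality $\n^T Q\n/\|\n\| \geq n_1q_{11}$ simplifies (after multiplying by $\|\n\|$ and cancelling) to $2q_{12} \geq q_{11}+\frac{n_2}{n_1}q_{22}$, i.e. $q_{12}\geq q_{11}/2 + (n_2/2n_1)q_{22} = \log\frac{1}{(1-p_{11})^{1/2}(1-p_{22})^{n_2/2n_1}}$, which is exactly $p_{12}\geq 1-(1-p_{11})^{1/2}(1-p_{22})^{n_2/2n_1}$; symmetrically for the other term. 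Thus $\n^T Q\n/\|\n\| \geq \max\{n_1q_{11},n_2q_{22}\}$ precisely when $p_{12}\geq\underline p$, and a parallel computation shows $\n^T Q\n/\|\n\| \geq n_1q_{11}+n_2q_{22}$ (equivalently $w(\n)$ dominates the split) precisely when $q_{12}\geq \frac12(q_{11}+q_{22})$, i.e. $p_{12}\geq\overline p$. This yields: case (iii) $p_{12}\leq\underline p$ gives $w_*(\n)=\max\{n_1q_{11},n_2q_{22}\}$; case (i) $\underline p\leq p_{12}\leq\overline p$ gives $w_*(\n)=\n^T Q\n/\|\n\|$; case (ii) $p_{12}\geq\overline p$ gives $w_*(\n)=n_1q_{11}+n_2q_{22}$.

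With these formulas in hand, plugging into Theorem~\ref{Thm_sbm} with $\log\|\n\|$ and the factor $2(1-\sigma)$ immediately gives the three displayed expressions for $\chi(\G)$; the identifications $\chi(\G_i) = (1+o(1)) n_i q_{ii}/(2(1-\sigma)\log\|\n\|)$ follow from applying the one-block case (Theorem~\ref{T:oneblock}, or Theorem~\ref{Thm_sbm} with $k=1$) to $\G_i\sim\M(n_i,p_{ii})$, using $\log(q_{ii}n_i) = (1+o(1))(1-\sigma)\log\|\n\|$ since $n_i = \Theta(\|\n\|)$ is forced by $\hat q(\n)=\|\n\|^{-\sigma+o(1)}$ together with $q_{11},q_{22}=\|\n\|^{-\sigma+o(1)}$ (if one $n_i$ were $o(\|\n\|)$ this would still be fine as the corresponding $\chi(\G_i)$ is lower order; I would handle that degenerate subcase by a direct remark). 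The boundary cases $p_{12}=\underline p$ or $p_{12}=\overline p$ are consistent since the two formulas agree there. I expect the main obstacle to be purely bookkeeping: verifying \eqref{ass_2} (equivalently that $w_*(\n,Q)$ is not too small) in case (iii), where $w_*(\n)=\max\{n_1q_{11},n_2q_{22}\}$ could in principle be much smaller than $\hat q(\n)\|\n\|$ if the two diagonal terms are wildly unbalanced — but the hypothesis $q_{11},q_{22}=\|\n\|^{-\sigma+o(1)}$ forces $n_1q_{11}$ and $n_2q_{22}$ to each be $\|\n\|^{1-\sigma+o(1)}$ (again using $n_i=\Theta(\|\n\|)$), so $w_*(\n)=\|\n\|^{1-\sigma+o(1)}$ and the comparison against $k\hat q(\n)q^*\|\n\|/\log\|\n\| = \|\n\|^{1-2\sigma+o(1)}$ holds since $\sigma>0$... and for $\sigma=0$ the finer hypothesis $\frac{q_{11}^2}{q_{22}}+\frac{q_{22}^2}{q_{11}}\ll\log\|\n\|$ is what saves it, which is the one place the extra assumption earns its keep.
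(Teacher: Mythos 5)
Your plan for part (i) and the structural computation of $w(\n,Q)$ align with the paper. But there is a genuine gap in part (ii), and a subsidiary gap in the claimed formula for $w_*$.

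For part (ii), $p_{12}$ may be arbitrarily close to (or equal to) $1$, so $q_{12}$ is unbounded. You notice that assumption \eqref{ass_1} is then in doubt, and you propose to work around it by ``invoking Theorem~\ref{T:lower} and Theorem~\ref{T:upper} separately ... noting the remark after Theorem~\ref{T:upper} that it permits unbounded off-diagonal probabilities.'' This only solves half the problem: it is Theorem~\ref{T:upper} that drops \eqref{ass_1}, while Theorem~\ref{T:lower} explicitly requires it. So the lower bound $\chi(\G)\geq(1-\eps)(\chi(\G_1)+\chi(\G_2))$ cannot be obtained by applying Theorem~\ref{T:lower} directly to $\G$. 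Your observation that ``the off-diagonal term drops out of $w_*$'' is an observation about the optimization problem, not about the probability estimates --- the hypothesis $(1+1/q^*)\max_{i,j}q_{ij}\ll\log\|\n\|$ refers to the matrix entries, not to $w_*$, and is simply false when $q_{12}$ is large. The paper's fix is a monotone sandwich: define $\overline{\G}\subseteq\G$ by replacing the off-diagonal entries with exactly $\overline{p}$, so that $\chi(\overline{\G})\leq\chi(\G)\leq\chi(\G_1)+\chi(\G_2)$, then apply part (i) to $\overline{\G}$ (whose $\overline{Q}$ has $q_{12}=\frac12(q_{11}+q_{22})\leq q^*$, so all assumptions hold) and compute $\n^T\overline{Q}\n/\|\n\|=n_1q_{11}+n_2q_{22}$; the upper end is handled by applying the one-block result to $\G_1,\G_2$, with a short argument covering the subcase where one $n_i=o(\|\n\|)$. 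Part (iii) is then handled by a symmetric sandwich.

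A second issue: you assert $w_*(\n,Q)=\min\bigl\{\max\{\n^TQ\n/\|\n\|,n_1q_{11},n_2q_{22}\},\ n_1q_{11}+n_2q_{22}\bigr\}$ by claiming that ``the only candidate decompositions of $\n$ are the trivial one and the split.'' Theorem~\ref{l:Qmatrix}(f) bounds the size of an optimal system by $k=2$, but it says nothing about which two vectors appear; a priori the optimal pair could be something like $\{(n_1,a)^T,(0,n_2-a)^T\}$ for an interior $a$. Establishing that the corners of the hypercube always suffice in the non-pseudodefinite regime would require an additional argument (concavity/convexity along lines, or a direct case analysis) that you do not supply. The paper never needs this, precisely because the sandwich reduces part (ii) to the pseudodefinite situation of part (i), where Theorem~\ref{l:Qmatrix}(c) gives $w_*=w$ outright.
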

\begin{proof}
Let  $q^*$, $\hat{q}(\cdot)$, $w(\cdot,Q)$ and $w_*(\cdot,Q)$ be defined by \eqref{def:q-star}, \eqref{w_def}, and \eqref{def:mu-star}. We will first check that the assumptions of Theorem~\ref{Thm_sbm} are satisfied in part (i). To this end, note that  \eqref{ass_sbm} are given in Theorem~\ref{C:two}  and observe that 
 \begin{align*}
   p_{12}  &\leq  \overline{p}  \quad \Longleftrightarrow \quad q _{12} \leq  \dfrac{1}{2} q_{11} + \dfrac12 q_{22}.
  \end{align*}
   In particular, we get that  $q_{12}\leq q^*$, thus
   \[
   	 \left(1+ \dfrac{1}{q^*}\right) \max_{i,j\in \{1,2\}} q_{ij} \leq q^* + 1 
   	 \leq \frac{(q^*)^2}{ \hat{q}(\n)} +1
   	 \leq \frac{q_{11}^2}{q_{22}}
 +    \frac{q_{22}^2}{q_{11}}  +1 \ll \log \|\n\|.
 \]
Using also{the bounds of Theorem \ref{l:Qmatrix}(d)}, we find that
   \[
   	 w_*(\n,Q) \geq \frac{(\hat{q}(\n))^2}{ 2 q^*} \|\n\| \gg \frac{\hat{q}(\n) q^* \|\n\|}{ \log \|\n\|}. 
   \]
   This establishes  \eqref{ass_1} and  \eqref{ass_2}.

   To prove part (i) by applying Theorem~\ref{Thm_sbm}, it remains to show that if  $ \underline{p} \leq p_{12} \leq \overline{p}$ then
     $$w_*(\n, Q)= \frac{\n^TQ \n}{\|\n\|}.$$
      The  inequality $q _{12} \leq  \dfrac{1}{2} q_{11} + \dfrac12 q_{22}$  is  also equivalent to  $\yvec^TQ\yvec \geq 0$ for any $\yvec \in \Reals^2$ with $y_{11}+y_{22}= 0$ (clearly, one only needs to consider $\yvec = (1,-1)^T$).  Using{the corner maximiser and the pseudodefinite properties in  Theorem} \ref{l:Qmatrix}(b,c), we get that 
  \[
  	w_*(\n, Q) = w(\n, Q) = \max\left\{ n_1 q_{11}, n_2 q_{22},   \dfrac{ \n^T Q \n  }{  \|\n\| }    
  		  \right\}.
  \]
Observe that 
  \[
  	n_1 q_{11} \leq  \frac{ \n^T Q \n  }{  \|\n\| } =  \frac{n_1^2 q_{11} + 2 n_1 n_2 q_{12} + n_2^2 q_{22} }{ n_1+n_2}
  \]
 holds  whenever $  q_{12}  \geq \frac12 q_{11} -  \frac{n_2}{2n_1} q_{22}$. Similarly, 
    $n_2 q_{22} \leq  \frac{ \n^T Q \n  }{  \|\n\| }$ holds whenever $q_{12}  \geq \frac12 q_{22} -  \frac{n_1}{n_2} q_{11}$. 
  Next, we recall the second assumption of part (i) that   
  $ p_{12}  \geq  \underline{p}$, which is equivalent to 
      \[
     q _{12} \geq   \max\left\{\dfrac12 q_{11} -  \dfrac{n_2}{2n_1}q_{22}, \dfrac12 q_{22} -  \dfrac{n_1}{2n_2}q_{11} \right\}.  
   \]  
   Thus, we conclude that
       \[
       		w_*(\n, Q) = \max\left\{ n_1 q_{11}, n_2 q_{22},   \dfrac{ \n^T Q \n  }{  \|\n\| } \right\} =   \dfrac{ \n^T Q \n  }{  \|\n\|}, 
       \]
      which  completes the proof of (i).

For part (ii), we consider the random graph $\overline{\G} \sim \M(n,\overline{P})$ such that 
$\overline{\G} \subset \G$, where  the diagonal entries  of $\overline{P}$ are  the same  as of $P$ while  
the off-digonal entries of $\overline{P}$ equal $\overline{p}$. 
Using \eqref{eq:123}, we get that
\[
	\chi(\overline{\G}) \leq \chi(\G) \leq \chi(\G_1) +\chi(\G_2).
\]
Thus, it is sufficient to show that  whp
\begin{align} 
	 \chi(\overline{\G}) &= (1+o(1))  \frac{
  		  n_1 q_{11} +
  		 n_2 q_{22}     
            }{2 (1-\sigma) \log \|\n\|}, 
            \label{part2_1}
            \\
            \chi(\G_1)+ \chi(\G_2)   &= (1+o(1))  \frac{
  		  n_1 q_{11} +
  		 n_2 q_{22}     
            }{2 (1-\sigma) \log \|\n\|}.
            \label{part2_2}
\end{align}

 Applying part (i) to $\overline{\G}$, we get that 
\[
	  \chi(\overline{\G}) =
  		(1 + o(1)) \frac{ \n^T \overline{Q}\n  }{ 2 (1-\sigma) \|\n\| 
  		\log \|\n\|},
\]
where $\overline{Q}$ is the matrix corresponding to $\overline{P}$. 
Note that
\[
	\frac{\n^T \overline{Q}\n}{\|\n\|} =  
	\frac{n_1^2 q_{11} +  n_1 n_2 (q_{11}+q_{22})  + n_2^2 q_{22} }{ n_1+n_2} = 
  		  n_1 q_{11} +
  		 n_2 q_{22}.
\]
Thus, \eqref{part2_1} holds.
  
 Next, observe that  $\eqref{part2_2}$ is implied by Theorem \ref{T:oneblock} if  $  n_{1} = \|\n\|^{1 +o(1)}  $ 
 and    $  n_{2 } = \|\n\|^{1 +o(1)}$. Otherwise, if one of the parts is very small, say $\G_1$, then we have
 \[
   \|\n\| = (1+o(1))n_2 \quad \text{and} \quad
 		 n_1 q_{11} +
  		 n_2 q_{22} = (1+o(1)) n_2 q_{22}.
 \]
 Applying Theorem \ref{T:oneblock} to $\G_2$,  we get whp
 \[
 	\chi(\G_2) = (1+o(1)) \frac{n_2 q_{22}}{ 2\log (p_{22} n_2)} =  (1+o(1)) \frac{n_1q_{11} + n_2 q_{22}}{ 2(1-\sigma) \log \|\n\|}.
 \]
 Let  $n_1' = \frac{n_2 q_{22}}{ q_{11} \log \|\n\|}$. By the assumptions,  $n_1' = \|\n\|^{1+o(1)} \gg n_1$.
Using the embedding   $\G_1 \subset \G(n_1', p_{11})$, we estimate
 \[
 	\chi(\G_1) \leq \chi(\G(n_1', p_{11})) =(1+o(1))  \frac{n_1' q_{11}}{ 2\log (p_{11} n_1')} = o(1) \frac{n_1q_{11} + n_2 q_{22}}{  \log \|\n\|} .
 \]
 The above two bounds for $\chi(\G_1) $ and  $\chi(\G_2)$ prove \eqref{part2_2}, completing the proof of part (ii).

 Part (iii) is proved in a similar way to part (ii).

\end{proof}


 \subsection{Union of two independent random graphs}\label{S:indun}
 
 Consider two independent binomial random graphs $\G_1 = \G(n,p_1)$ and $\G_2 = \G(n,p_2)$ on the same vertex  set $[n]$,
 where $p_1,p_2$ are some constants from $(0,1)$. 
   It is easy to show that  their union 
 $\G_1 \cup \G_2$ is also a binomial random graph  $\G(n,p)$, where  $1- p =  (1-p_1)(1-p_2).$ This is equivalent to 
 \begin{equation}\label{eq:p12}
 	\log \left(\dfrac{1}{1-p}\right) =  	\log \left(\dfrac{1}{1-p_1}\right)  +	\log \left(\dfrac{1}{1-p_2}\right). 
 \end{equation}
 Then, by formula \eqref{eq:Gnp},  we get that whp
 \begin{equation}\label{eq:indun}
 	\chi(\G_1 \cup \G_2) = (1+o(1)) \left(\chi(\G_1) + \chi(\G_2)\right).
 \end{equation}
 That is, the chromatic number of the union of two independent random graphs is whp asymptotically equal  to the sum of the chromatic numbers of the two binomial random graphs.  In this section, we  prove a generalisation of  this observation to the stochastic block model{based on Theorem~\ref{Thm_sbm}. Apart from the assumptions of Theorem~\ref{Thm_sbm}, we also insist that both random graph models satisfy the pseudodefinite property of Theorem \ref{l:Qmatrix}(c).}

 \begin{thm}\label{T:indun}
 	Let $\G_1 \sim \calG(\n,P_1)$ and $\G_2 \sim \calG(\n,P_2)$ be independent random graphs from the stochastic block models, where $P_1$ and $P_2$ satisfy the assumptions of Theorem \ref{Thm_sbm} (with the same $\sigma$).
 	Assume also that  $\yvec^T Q_1\yvec \geq  0$ and $\yvec^T Q_2\yvec \geq  0$ for all $\yvec \in \Reals^k$ with $y_1+\ldots +y_k =0$,
 	where $Q_1 = Q(P_1)$ and $Q_2 = Q(P_2)$ are defined by \eqref{Q_def}.
 	 Then, whp 
 	 	\begin{equation}\label{eq1:G12}
 		\chi(\G_1 \cup \G_2) \leq (1+o(1)) \left(\chi(\G_1) + \chi(\G_2)\right).
 	\end{equation}
 	In addition, if  	$w(\n,Q_1) =  \frac{\n^T Q_1\n}{\|\n\|}$
 	and $w(\n,Q_2) =  \frac{\n^T Q_2\n}{\|\n\|}$ then  \eqref{eq:indun} holds whp.
 \end{thm}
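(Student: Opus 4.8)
The plan is to realise $\G_1\cup\G_2$ as a single instance of the stochastic block model and then apply Theorem~\ref{Thm_sbm} to each of $\G_1$, $\G_2$, and $\G_1\cup\G_2$, thereby reducing \eqref{eq1:G12} and \eqref{eq:indun} to deterministic inequalities between $w_*(\n,Q_1)$, $w_*(\n,Q_2)$, and $w_*(\n,Q_1+Q_2)$.

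First I would observe that $\G_1\cup\G_2$ is again a block model, now with parameter matrix $Q_1+Q_2$. Indeed, since the edges of $\G_1$ and $\G_2$ are drawn independently, a pair $\{u,v\}$ with $u\in B_i$, $v\in B_j$ fails to be an edge of $\G_1\cup\G_2$ exactly when it fails in both graphs; as all the probabilities lie in $[0,1)$ this happens with probability in $(0,1]$, and taking logarithms as in \eqref{Q_def} shows $\G_1\cup\G_2\sim\calG(\n,P)$ with $Q(P)=Q_1+Q_2$ (entrywise). Next I would check that $Q_1+Q_2$ inherits the hypotheses of Theorem~\ref{Thm_sbm} with the same $\sigma$: pseudodefiniteness is immediate since $\yvec^T(Q_1+Q_2)\yvec=\yvec^TQ_1\yvec+\yvec^TQ_2\yvec\ge0$ on $\{\sum_iy_i=0\}$; writing $q^*_\ell$, $\hat q_\ell(\cdot)$, $w(\cdot,Q_\ell)$ for the quantities in \eqref{def:q-star}, \eqref{w_def}, \eqref{def:mu-star} formed from $Q_\ell$, the asymptotics \eqref{ass_sbm} and the bound \eqref{ass_1} follow at once from $\max\{q^*_1,q^*_2\}\le q^*\le q^*_1+q^*_2$, $\hat q(\n)=\hat q_1(\n)+\hat q_2(\n)$, and $\bigl(1+\frac1{q^*}\bigr)\max_{ij}q_{ij}\le\sum_{\ell=1,2}\bigl(1+\frac1{q^*_\ell}\bigr)\max_{ij}(Q_\ell)_{ij}$; and \eqref{ass_2} for $Q_1+Q_2$ follows from the monotonicity of $w_*(\n,\cdot)$ under the entrywise order (immediate from \eqref{w_def}), which gives $w_*(\n,Q_1+Q_2)\ge\max\{w_*(\n,Q_1),w_*(\n,Q_2)\}$, combined with the lower bound $w_*(\n,Q_1+Q_2)\ge\frac{(\hat q(\n))^2}{\sum_iq_{ii}}\|\n\|$ of Theorem~\ref{l:Qmatrix}(d) and the inequality $k\,q^*_\ell\ll\log\|\n\|$, itself forced by \eqref{ass_2} for $Q_\ell$ together with $w_*(\n,Q_\ell)\le\hat q_\ell(\n)\|\n\|$ from Theorem~\ref{l:Qmatrix}(d).

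Granting this, Theorem~\ref{Thm_sbm} applies to all three graphs, and intersecting the (finitely many) resulting whp events we have, whp,
\[
\chi(\G_\ell)=(1+o(1))\frac{w_*(\n,Q_\ell)}{2(1-\sigma)\log\|\n\|}\ \ (\ell=1,2),\qquad
\chi(\G_1\cup\G_2)=(1+o(1))\frac{w_*(\n,Q_1+Q_2)}{2(1-\sigma)\log\|\n\|}.
\]
The remaining input is short and deterministic. From \eqref{w_def}, $w(\xvec,Q_1+Q_2)=\max_{\zerovec\preceq\yvec\preceq\xvec}\bigl(\frac{\yvec^TQ_1\yvec}{\|\yvec\|}+\frac{\yvec^TQ_2\yvec}{\|\yvec\|}\bigr)\le w(\xvec,Q_1)+w(\xvec,Q_2)$, and since $Q_1$, $Q_2$, $Q_1+Q_2$ are all pseudodefinite, Theorem~\ref{l:Qmatrix}(c) gives $w_*=w$ for each, so $w_*(\n,Q_1+Q_2)\le w_*(\n,Q_1)+w_*(\n,Q_2)$; dividing by $2(1-\sigma)\log\|\n\|$ yields \eqref{eq1:G12}. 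For \eqref{eq:indun}, the extra hypothesis $w(\n,Q_\ell)=\frac{\n^TQ_\ell\n}{\|\n\|}$ gives the matching lower bound $w(\n,Q_1+Q_2)\ge\frac{\n^T(Q_1+Q_2)\n}{\|\n\|}=w(\n,Q_1)+w(\n,Q_2)$, so equality holds throughout and $\chi(\G_1\cup\G_2)=(1+o(1))(\chi(\G_1)+\chi(\G_2))$.

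The step I expect to require the most care is the verification that \eqref{ass_2} survives for $Q_1+Q_2$: one has to play the bounds $w_*(\n,Q_1+Q_2)\ge\max\{w_*(\n,Q_1),w_*(\n,Q_2)\}$ off against the Cauchy--Schwarz-type estimate of Theorem~\ref{l:Qmatrix}(d), using $k\,q^*_\ell\ll\log\|\n\|$ to control the cross terms. The rest is either the soft union-of-models identity, the (immediate) preservation of pseudodefiniteness, or a one-line deterministic inequality. Conceptually, the pseudodefinite hypotheses on $Q_1$ and $Q_2$ are used precisely so that $w_*=w$: subadditivity of $w(\n,\cdot)$ in the matrix argument is trivial, whereas for $w_*(\n,\cdot)$ it is false in general (an infimum of a sum need not split), so without pseudodefiniteness one cannot even expect \eqref{eq1:G12}.
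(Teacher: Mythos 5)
Your proposal follows the paper's proof essentially verbatim: write $\G_1\cup\G_2\sim\calG(\n,P)$ with $Q(P)=Q_1+Q_2$, apply Theorem~\ref{Thm_sbm} to all three graphs, then reduce to the deterministic inequality $w_*(\n,Q_1+Q_2)\le w_*(\n,Q_1)+w_*(\n,Q_2)$ (with equality under the extra hypotheses) via the pseudodefinite identity $w_*=w$ of Theorem~\ref{l:Qmatrix}(c) and the trivial subadditivity of $w(\n,\cdot)$ in the matrix argument. The one point where you supply more detail than the paper --- which simply asserts that $Q_1+Q_2$ ``satisfies the assumptions of Theorem~\ref{Thm_sbm}'' --- is also the point you yourself flag as delicate, namely \eqref{ass_2}. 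Your sketch there (play $w_*(\n,Q_1+Q_2)\ge\max_\ell w_*(\n,Q_\ell)$ against the lower bound of Theorem~\ref{l:Qmatrix}(d), using $kq^*_\ell\ll\log\|\n\|$) does not obviously close: when, say, $q_1^*\ge q_2^*$ but $\hat q_2(\n)\ge\hat q_1(\n)$, the cross term $\hat q_2(\n)q_1^*$ dominates both $\hat q_1(\n)q_1^*$ and $\hat q_2(\n)q_2^*$ by a $\|\n\|^{o(1)}$ factor that is not controlled by \eqref{ass_2} for $Q_1$ and $Q_2$ alone, nor by $w_*(\n,Q_1+Q_2)\ge\hat q(\n)^2\|\n\|/\sum_iq_{ii}$. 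This is a shared looseness rather than a departure from the paper's argument, but if you want a complete proof you should either supply the missing inequality or verify \eqref{ass_2} for $Q_1+Q_2$ under a slightly strengthened hypothesis such as $(kq^*_\ell)^2\ll\hat q_\ell(\n)\log\|\n\|$ as in the remark following Theorem~\ref{Thm_sbm}.
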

 \begin{proof}
   Applying Theorem \ref{Thm_sbm}, we find that  whp
   \begin{equation}\label{chi12}
   	\chi(\G_1) =  (1+o(1))\frac{w_*(\n,Q_1)}{ 2(1-\sigma) \log \|\n\|}, \qquad  \chi(\G_2) = (1+o(1))\frac{w_*(\n,Q_2)}{ 2(1-\sigma) \log \|\n\|}.
   \end{equation}
  Observe that the union $\G_1 \cup \G_2$ also belongs to the stochastic block model $\mathcal G(\n,P)${with the entries of $P$ defined  similarly to 
  \eqref{eq:p12}. Observe that} 
   \begin{equation*}
   	Q= Q_1 +Q_2,
   \end{equation*}
   where $Q= Q(P)$ is defined by \eqref{Q_def}. In particular, we get that 
   $w_*(\n, Q) \geq w_*(\n,Q_1)$ and $w_*(\n,Q_2)$. It is straightforward  to check that $P$ and $Q$ satisfy 
  the assumptions of Theorem \ref{Thm_sbm} (with the same $\sigma$). Thus, we get whp
  \begin{equation}\label{chiun}
  	\chi(\G_1 \cup \G_2) =  (1+o(1))\frac{w_*(\n,Q_1 +Q_2)}{ 2(1-\sigma) \log \|\n\|}. 
  \end{equation}
  Next, by {the pseudodefinite property in Theorem} \ref{l:Qmatrix}(c), we get that
  \[	
  	w_*(\n,Q_1) = w(\n, Q_1), \qquad w_*(\n,Q_2) = w(\n, Q_2).
  \]
  Note also  $\yvec^T Q\yvec = \yvec^T Q_1\yvec+ \yvec^T Q_2\yvec \geq  0$ for all $\yvec \in \Reals^n$ with $y_1+\ldots +y_n =0$. 
  Using  Theorem~\ref{l:Qmatrix}(c) again, we find that
  \begin{align*}
  		w_*(\n,Q_1+Q_2) &= w(\n, Q_1+Q_2)\\
  		&= {\max_{\boldsymbol{0} \preceq \yvec  \preceq \n} \dfrac{\yvec^T (Q_1+Q_2) \yvec }{\|\yvec\|}
  		 \leq \max_{\boldsymbol{0} \preceq \yvec  \preceq \n} \dfrac{\yvec^T Q_1 \yvec }{\|\yvec\|}
  		+\max_{\boldsymbol{0} \preceq \yvec  \preceq \n} \dfrac{\yvec^T Q_2 \yvec }{\|\yvec\|}}
  		\\
  		&= w(\n,Q_1) + w(\n,Q_2) = w_*(\n,Q_1) + w_*(\n,Q_2).
  \end{align*}
  Combining the above, we prove \eqref{eq1:G12}. 
  
 To establish \eqref{eq:indun}   under the additional conditions that 	$w(\n,Q_1) =  \frac{\n^T Q_1\n}{\|\n\|}$
 	and $w(\n,Q_2) =  \frac{\n^T Q_2\n}{\|\n\|}$,   we will show that 
  \[
  	w_*(\n,Q_1+Q_2) =  w_*(\n,Q_1)+ w_*(\n,Q_2).
  \]
  Then, the result would follow by    \eqref{chi12}  and \eqref{chiun}.
  We already proved that 
  $
  	w_*(\n,Q_1) = w(\n,Q_1)$,  $ w_*(\n,Q_2) = w(\n,Q_2)$, $w_*(\n,Q_1+Q_2) = w(\n,Q_1+ Q_2)$,
   and  $w(\n, Q_1 +Q_2) \leq  w(\n,Q_1) + w(\n,Q_2).$
  Thus, it remains to prove that
  \begin{equation}\label{last_inequality}
  	w(\n, Q_1 +Q_2) \geq w(\n,Q_1) + w(\n,Q_2). 
  \end{equation}
  Note that 
  \[
  	\dfrac{\n^T Q_1\n}{\|\n\|} + \dfrac{\n^T Q_2\n}{\|\n\|}
  	= \dfrac{\n^T (Q_1+Q_2)\n}{\|\n\|}
  	\leq {\max_{\boldsymbol{0} \preceq \yvec  \preceq \n}} \dfrac{\yvec^T (Q_1+Q_2) \yvec }{\|\yvec\|} = w(\n, Q_1+Q_2).
  \]
  Recalling the assumptions that   	$w(\n,Q_1) =  \frac{\n^T Q_1\n}{\|\n\|}$
 	and $w(\n,Q_2) =  \frac{\n^T Q_2\n}{\|\n\|}$,   we derive \eqref{last_inequality}, thus completing the proof.
 \end{proof}
 

\subsection{Percolations on blow-up graphs: proof of Theorem \ref{T:perc}}\label{S:perc}
 
In order to prove  Theorem~\ref{T:perc} by applying Theorem~\ref{Thm_sbm}, we need the following auxiliary result on  the chromatic number of any deterministic graph which can be found as a solution of a discrete optimisation problem similar to   \eqref{def:mu-star}.  

For a graph $G$, let $\mad(G)$ denote  \emph{the  	maximum average degree} over all subgraphs of $G$.

\begin{lemma}\label{L1:perc}
For any graph $G$, we have 
	\begin{equation*}
	\chi(G) = \min_{\mathcal{U}} \sum\nolimits_{S \in \mathcal{U}} (1+\mad(G[S])),
	\end{equation*}
	where the  minimum is over all partitions  $\mathcal{U}$ of the vertex set $V(G)$ and  $G[S]$ denotes the induced subgraph of $G$.
\end{lemma}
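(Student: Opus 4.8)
The plan is to prove the identity by establishing the two inequalities separately, both by elementary arguments.

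First I would prove $\chi(G) \le \min_{\mathcal{U}} \sum_{S\in\mathcal{U}}(1+\mad(G[S]))$. The key ingredient is the folklore bound $\chi(H) \le 1 + \mad(H)$ valid for every graph $H$: letting $d(H)$ denote the degeneracy of $H$ (the maximum over subgraphs $H'$ of $H$ of the minimum degree $\delta(H')$), a greedy colouring along a degeneracy ordering uses at most $d(H)+1$ colours, and since every subgraph $H'$ of $H$ satisfies $\delta(H') \le \frac{2|E(H')|}{|V(H')|} \le \mad(H)$, we get $\chi(H) \le d(H)+1 \le 1+\mad(H)$. Now, given any partition $\mathcal{U}$ of $V(G)$, colour each induced subgraph $G[S]$ with its own disjoint palette; edges inside a block are handled by its proper colouring and edges between distinct blocks automatically receive distinct colours, so $\chi(G) \le \sum_{S\in\mathcal{U}}\chi(G[S]) \le \sum_{S\in\mathcal{U}}(1+\mad(G[S]))$. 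Minimising over $\mathcal{U}$ (a finite set, since $V(G)$ is finite) yields the inequality.

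For the reverse inequality $\min_{\mathcal{U}}\sum_{S\in\mathcal{U}}(1+\mad(G[S])) \le \chi(G)$, I would simply exhibit a partition attaining the value $\chi(G)$: take an optimal proper colouring of $G$ and let $\mathcal{U}$ consist of its nonempty colour classes $C_1,\dots,C_{\chi(G)}$. Each $C_i$ is independent, so $G[C_i]$ is edgeless and $\mad(G[C_i]) = 0$, giving $\sum_{i}(1+\mad(G[C_i])) = \chi(G)$. Combining the two inequalities gives the claimed identity.

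There is essentially no obstacle here; the lemma is a convenient reformulation rather than a deep statement, and its role is to present $\chi(G)$ as the solution of a discrete optimisation problem of the same shape as the definition \eqref{def:mu-star} of $w_*(\cdot)$. The only point deserving a word of care is the convention that $\mad$ of an edgeless graph (in particular of a single vertex) equals $0$, which is precisely what makes the term $1+\mad(G[S])$ contribute one colour per independent block and lets the colour-class partition be optimal.
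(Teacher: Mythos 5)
Your proof is correct and follows essentially the same route as the paper's: both use the degeneracy bound $\chi(H)\le 1+\max_{U}\delta_H(U)\le 1+\mad(H)$ for the upper bound and the colour-class partition (each class inducing an edgeless graph with $\mad=0$) for the lower bound. The paper cites the degeneracy inequality as standard while you spell out the greedy argument, but the substance is identical.
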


\begin{proof}
It is a standard  fact from the graph theory that 
\begin{equation}\label{chi:upper}
\chi(G) \leq  1+ \max_{U \subseteq V(G)} \delta_G (U), 
\end{equation}
where $\delta_G(U)$ is the minimum  degree in the induced graph $G[U]$. 
The proof  of  \eqref{chi:upper} is by a straightforward induction  on $|V(G)|$; see, for example,
\cite[Lemma 7.12]{FK2015}.

Clearly, we have that 
\[
	\max_{U \subseteq V(G)} \delta_G (U) \leq \mad(G),
\]
{which together with \eqref{chi:upper} implies that, for any 
$S\in V(G)$,  
\[
\chi(G[S]) \leq 1 + \mad(G[S]).
\]}
Thus,  for any partition  $\mathcal{U}$ of $V(G)$, we get  
\begin{equation*}
\chi(G) \leq  \sum\nolimits_{S \in \mathcal{U}} \chi(G[S]) \leq \sum\nolimits_{S \in \mathcal{U}} (1+\mad(G[S]))
\end{equation*}
by colouring all parts of $\mathcal{U}$ in different colours.

On the other hand, for the partition $\mathcal{U}$ of $V(G)$  corresponding {to}   the colour classes of an optimal colouring of $G$, we observe that  $\mad(G[S]) = 0$ for all $S \in \mathcal{U}$.  Thus, we get that 
\[
	 \sum\nolimits_{S \in \mathcal{U}} (1+\mad(G[S])) = \chi(G).
\] 
This completes the proof.
\end{proof}

We  proceed to the proof of Theorem \ref{T:perc}.  
{The three key proof ingredients   are  the following:
\begin{itemize}
		\item the asymptotic formula for the chromatic number of the stochcastic block model in terms of $w_*(\cdot)$ given in Theorem~\ref{Thm_sbm};
		\item  Lemma \ref{L1:perc} that expresses  the chromatic number of an arbitrary graph in  terms of  the maximum average degree that is similar to the underlying  optimisation problem for  $w_*(\cdot)$;
		\item the existence of a small near-optimal integer system given by Theorem \ref{l:Qmatrix}(g) that approximates $w_*(\cdot)$.
\end{itemize}}
\begin{proof}[Proof of  Theorem \ref{T:perc}.]
	Let $\epsilon \in (0,\frac14)$ and $H$ be a graph on vertex set $[k]$. Assume that $p=p(n)$ satisfies the conditions in Theorem \ref{T:perc}. 
Let $A$ denote the adjacency matrix of $H$ and $I$ be the $k\times k$ identity matrix.

First, we note that the percolated random graph $G_p$, where $G = G_H(\n)$,  is distributed according to  $\M(\n,P)$ with $P = p(I+A)$. Let $Q=Q(P)$ be defined according to \eqref{Q_def}.
We apply Theorem~\ref{Thm_sbm} with $\sigma:=  -\frac{\log p} {\log \|\n\|} \leq \sigma_0:=\frac{1}{4} -\epsilon$. All assumptions of Theorem \ref{Thm_sbm} are straightforward to  check.   {Since $(1-\sigma)\log \|\n\|=\log(p\|\n\|)$ by definition of $\sigma$,}  Theorem~\ref{Thm_sbm} implies that 
\[
	\chi(G_p) {= 
		(1+o(1))\frac{w_*(\n,Q)}{ 2(1-\sigma)\log \|\n\| }} = (1+o(1))  \frac{w_*(\n,Q)}{  2\log  (p\|\n\|)}.
\] 
Note that all elements of matrix $Q$ are $\log (\tfrac{1}{1-p})$ or $0$.  More precisely,  
$Q =   \log (\tfrac{1}{1-p})  \tilde{Q}$, where $\tilde{Q}:= I+A$.  
By {the scaling property in Theorem} \ref{l:Qmatrix}(a), we have
        \[
          w_*(\n,Q) = \log (\tfrac{1}{1-p})\ w_*(\n,\tilde{Q}).
         \] 
     Thus, to prove  \eqref{eq:perc}, it remains to show that
      \begin{equation}\label{last:perc}
      \chi(G) = (1+o(1))  w_*(\n,\tilde{Q})
      \end{equation}
   
       To show \eqref{last:perc}, we employ Lemma \ref{L1:perc}. {To this end,  for    any $S \subseteq V(G)$,   we  define 
            	\begin{equation}\label{def:b(U)} 
     \b(S) = (b_1(S), \ldots, b_k(S))^T \in \Naturals^k \quad \text{with}\quad  		b_i(S):= |S\cap B_i| \quad\text{for }  i\in[k],
       	\end{equation} and  observe that, for any $U \subseteq S$  
 	\begin{equation}\label{eq:b(S)}
		\begin{aligned} 
	   \b(U)^T \tilde{Q} \b(U) &=   
	     \sum_{i \in [k]}   b_i(U)^2  +  2 \sum_{ij \in H} b_i(U) b_j(U) \\&  
	 =   \|\b(U)\|+ 2 \left( \sum_{i \in [k]}  \dfrac{b_i(U)(b_i(U)-1)}{2} +   \sum_{ij \in H} b_i(U) b_j(U)\right)\\&= 
	      |U|+ 2|E(G[U])|.
	   \end{aligned}
	\end{equation}  }
	  
	   Using \eqref{eq:b(S)} and {the corner maximiser property} in Theorem \ref{l:Qmatrix}(b), we find that 
	 \[
	 w(\b(S), \tilde{Q}) {=  \max_{U \subseteq S }\ \frac{\b(U)^T \tilde{Q}\ \b(U)}{ \|\b(U)\|} = \max_{U \subseteq S}  \frac{|U|+ 2|E(G[U])|}{ |U|} =} 1+\mad(G[S]). 
	 \] 
	 {By the definition of $w_*(\n,\tilde{Q})$,  we obtain that 
	 	\begin{equation}\label{eq:w*-partition}
	 		w_*(\n,\tilde{Q}) \leq  \min_{\mathcal{U}} \sum\nolimits_{S \in \mathcal{U}} (1+\mad(G[S])), 
	 	\end{equation}
	 	where the   minimum is over all partitions  $\mathcal{U}$ of the vertex set $V(G)$.
	 	On the other hand, due to  the near-optimal integer system of Theorem \ref{l:Qmatrix}(g), there exists  a partition $\mathcal{U^*}$ of $V(G)$
	 	such that 
	 	\[
	 	 \min_{\mathcal{U}} \sum\nolimits_{S \in \mathcal{U}} (1+\mad(G[S])) \leq    \sum\nolimits_{S \in \mathcal{U^*}} (1+\mad(G[S]))
	 	\leq  w_*(\n,\tilde{Q}) +  k^2 \tilde{q}^*.
	 	\] 
	 	{This together with \eqref{eq:w*-partition} gives 
	 	\[ 	
	 	w_*(\n,\tilde{Q}) \leq  \min_{\mathcal{U}} \sum\nolimits_{S \in \mathcal{U}} (1+\mad(G[S])) 	\leq  w_*(\n,\tilde{Q}) +  k^2 \tilde{q}^*.
	 		\] 	
 	}
	 	Note that the bounds of  Theorem \ref{l:Qmatrix}(d) imply $w_*(\n,\tilde{Q}) \rightarrow \infty$. Because  $\tilde{q}^*=1$ and $k=|V(H)|$ is a fixed constant,  
	 	using Lemma \ref{L1:perc}, we get that 
	 	\begin{equation*}
	 		\chi(G) = \min_{\mathcal{U}} \sum\nolimits_{S \in \mathcal{U}} (1+\mad(G[S])) 
	 		=  w_*(\n,\tilde{Q}) +O(1), 
	 	\end{equation*}
	 	which implies \eqref{last:perc} and completes the proof.  }
 \end{proof}


 \subsection{Chung-Lu model: proof of Theorem \ref{T:main4}}\label{S:main4}
 {Let $k = k(n) \in \Naturals$ be such that $1\ll k \ll \log n$.}
 Let $S_1 = [0, \frac1k],\  S_2 = (\frac1k, \frac2k], \ldots,  S_k = (\frac{k-1}{k},1]$. Define  
 $\n(\uvec) = (n_1,\ldots, n_k)^T$ by 
 \[
 	n_i = n_i(\uvec) := \left|\{t \in [n] \st u_t \in S_i\}\right|.
 \]
 Define two $k \times k$ matrices $P^L = (p_{ij}^L)_{i,j \in [k]}$ and $P^U = (p_{ij}^U)_{i,j \in [k]}$ by 
 \[
 	p_{ij}^L := p\cdot \dfrac{(i-1)(j-1)}{k^2}, \qquad  	p_{ij}^U :=  p\cdot\dfrac{ij}{k^2}. 
 \]
 Then, for any two vertices $a,b \in V(\G_p^{\times}) = [n]$, we have 
 \[
 	p_{ij}^L \leq p_{ab}^{\times} \leq  p_{ij}^U,
 \]
 where $i=i(a)$ and $j = j(b)$ are such that $u_a \in S_i$ and $u_b \in S_j$. 
  Therefore, there are two random graphs  $\G^L \sim \mathcal G(\n(\uvec),P^L )$ and 
  $\G^U \sim \mathcal G(\n(\uvec),P^U )$ such that
 $\G^L \subseteq \G_p^{\times} \subseteq \G^U$.  
  Furthermore, we find that 
 \[
 		\chi\left(\G^{U}\right) \leq  \chi\left( \G_p^{\times}\right)  \leq  \chi\left(\G^{L}\right).
 \]
{ Let $Q^L= Q(P^L)$ and $Q^U = Q(P^U)$ be defined according to \eqref{Q_def}. }

 { Next, we show that $w_*\left(\n(\uvec), Q^L\right)  = \Omega(n)$ 
 	and $w_*\left(\n(\uvec), Q^U\right) = \Omega(n)$. Then
 	the assumptions of Theorem \ref{Thm_sbm} hold for both random graphs $\G^L $ and $\G^U$.
Indeed, setting  $\sigma:= -\dfrac{\log p}{\log n}$ 
and using the assumptions of Theorem \ref{T:main4}, that is, $1\gg p \geq n^{-1/4+\epsilon}$ and $\sum_{i\in[n]} u_i = \Omega(n)$, we get 
\[
q^*, \hat{q}((\n(\uvec)) = \Theta(p) = \Theta(n^{-\sigma}) =  n^{-\sigma+o(1)} \ll 1
\]
  for $\G^L $ and $\G^U$.
Recalling  also $k\ll \log n$,
we obtain \eqref{ass_sbm} and \eqref{ass_1}. Finally \eqref{ass_2}
 holds if   $w_*\left(\n(\uvec), Q^L\right) $
and $w_*\left(\n(\uvec), Q^U\right) $ are $\Omega(n)$ since 
$k \hat{q}(\n(\uvec)) q^* \ll \log n$.
Thus, to complete the proof of  Theorem~\ref{T:main4}(a), it remains to establish  the following lemma.
 }

 \begin{lemma}\label{L:w-star-star} 
  Suppose  the assumptions of Theorem \ref{T:main4} hold.
   Then
 	\begin{align*}
 	    \max_{U \subseteq [n]} \dfrac{1}{|U|} \left(\sum\nolimits_{t \in U} u_t\right)^2
 	=   \dfrac{1+o(1)}{p} w_*\left(\n(\uvec), Q^L\right)  
    = \dfrac{1+o(1)}{p}
 		w_*\left(\n(\uvec), Q^U\right)   = \Omega(n).
 	\end{align*}
 \end{lemma}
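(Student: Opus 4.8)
The plan is to prove the chain of equalities by relating each quantity to a common ``continuous'' optimisation over subsets of $[n]$ weighted by $u_t$, and then controlling the error introduced by the discretisation of $[0,1]$ into $k$ intervals $S_1,\dots,S_k$. First I would unpack $w_*(\n(\uvec),Q^U)$. Since $P^U = \frac{p}{k^2}\,\vvec\vvec^T$ where $\vvec = (1,2,\dots,k)^T$, the matrix $Q^U$ has entries $q^U_{ij} = \log\bigl(1-\tfrac{p\,ij}{k^2}\bigr)^{-1} = \tfrac{p\,ij}{k^2}\,(1+o(1))$ uniformly in $i,j$ (using $p\to 0$, so $\tfrac{p\,ij}{k^2}\le p\to 0$). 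Hence $\yvec^T Q^U\yvec = \tfrac{p}{k^2}(1+o(1))\,(\vvec^T\yvec)^2 \ge 0$ for all $\yvec$, so $Q^U$ satisfies the pseudodefinite hypothesis and Theorem~\ref{l:Qmatrix}(c) gives $w_*(\n(\uvec),Q^U) = w(\n(\uvec),Q^U) = \max_{\boldsymbol{0}\preceq\yvec\preceq\n(\uvec)} \tfrac{\yvec^T Q^U\yvec}{\|\yvec\|} = \tfrac{p}{k^2}(1+o(1))\max_{\boldsymbol 0\preceq\yvec\preceq\n(\uvec)}\tfrac{(\vvec^T\yvec)^2}{\|\yvec\|}$.

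Next I would identify the discrete optimum $\max_{\boldsymbol 0\preceq\yvec\preceq\n(\uvec)}\tfrac{(\vvec^T\yvec)^2}{\|\yvec\|}$ with a subset optimum. By Theorem~\ref{l:Qmatrix}(b) the maximiser is at a corner $y_i\in\{0,n_i\}$, i.e.\ it corresponds to choosing a union of blocks $I\subseteq[k]$ and taking $U = \{t: u_t\in S_i \text{ for some } i\in I\}$; then $\vvec^T\yvec = \sum_{i\in I} i\,n_i$ and $\|\yvec\| = \sum_{i\in I} n_i = |U|$. Since every $t$ with $u_t\in S_i$ has $u_t\in(\tfrac{i-1}{k},\tfrac ik]$, we have $\tfrac{i-1}{k}\le u_t \le \tfrac ik$, hence $\sum_{t\in U}u_t = \tfrac 1k\sum_{i\in I} i\, n_i + O(\tfrac{|U|}{k}) = \tfrac 1k\vvec^T\yvec + O(\tfrac nk)$. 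Therefore $\tfrac{p}{k^2}\cdot\tfrac{(\vvec^T\yvec)^2}{\|\yvec\|} = p\cdot\tfrac{(\sum_{t\in U}u_t + O(n/k))^2}{|U|}$. Optimising over corners and comparing with an arbitrary $U\subseteq[n]$ (whose indicator need not be block-aligned, but rounding it to the containing union of blocks changes $\sum_t u_t$ by $O(n/k)$ and $|U|$ only upward, so does not decrease the ratio by more than an $O(n/k)$ additive term in $\sqrt{\cdot}$): since the optimal value is $\Omega(n)$ (as $\sum_i u_i = \Omega(n)$ forces, e.g., $U=[n]$ or the top half of indices to give $\Omega(n)$), the $O(n/k)$ corrections are $o(n) = o(1)\cdot\text{(optimum)}$, so $\tfrac1p\,w_*(\n(\uvec),Q^U) = (1+o(1))\max_U\tfrac1{|U|}(\sum_{t\in U}u_t)^2$. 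The identical argument with $\vvec$ replaced by $(0,1,\dots,k-1)^T$ handles $Q^L$, using $\tfrac{i-1}{k}\le u_t$ from the other side; the same $O(n/k)$ error analysis applies.

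Finally I would record the $\Omega(n)$ lower bound: taking $U = [n]$ gives $\tfrac1n(\sum_t u_t)^2 = \Omega(n^2)/n = \Omega(n)$ directly from $\sum_t u_t = \Omega(n)$, so all three quantities are $\Omega(n)$, which is the last assertion and (as noted before the lemma) supplies assumption~\eqref{ass_2} needed to invoke Theorem~\ref{Thm_sbm} for $\G^L$ and $\G^U$. The main obstacle I anticipate is making the discretisation error rigorous in the right direction: one must check that passing from an arbitrary subset $U$ to its block-rounding, and the replacement of exact $u_t$ values by their interval endpoints $\tfrac ik$ or $\tfrac{i-1}{k}$, perturbs the objective $\tfrac1{|U|}(\sum u_t)^2$ by only $O(n/k) + o(1)\cdot\text{(value)}$, so that squaring does not amplify the error — here one uses $(a+O(n/k))^2/|U| = a^2/|U| + O(n/k\cdot\sqrt{a^2/|U|}\cdot\tfrac{1}{\sqrt{|U|}}\cdot\dots)$ type estimates together with the a priori bound that the optimum is $\Theta(n)$, and the assumption $k\to\infty$ to kill the relative error. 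The upper/lower sandwiching $\G^L\subseteq\G_p^\times\subseteq\G^U$ then transfers the formula, both sides of which agree up to $(1+o(1))$ by the lemma, completing Theorem~\ref{T:main4}(a); part (b) is analogous with the additive structure $p_{ij}^\pm$ replaced appropriately.
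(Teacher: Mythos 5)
Your proof has a genuine gap in the step that invokes the pseudodefinite property. You claim $\yvec^T Q^U\yvec = \tfrac{p}{k^2}(1+o(1))(\vvec^T\yvec)^2 \geq 0$ for all $\yvec$, but the $(1+o(1))$ factor attaches entrywise, $q^U_{ij} = (1+o_{ij}(1))\,p^U_{ij}$, and these per-entry factors cannot be collected into a single multiplicative $(1+o(1))$ on the quadratic form when $\yvec$ has mixed signs. And mixed-sign $\yvec$ is precisely what the hypothesis of Theorem~\ref{l:Qmatrix}(c) is about (all $\yvec\in\Reals^k$ with $\sum_i y_i = 0$). Writing $\yvec^T Q^U\yvec = \yvec^T P^U\yvec + \sum_{ij}o_{ij}(1)\,p^U_{ij}y_iy_j$ makes this visible: the error term need not be dominated by the (possibly vanishing) main term $\tfrac{p}{k^2}(\vvec^T\yvec)^2$, so nonnegativity does not follow from the approximation.

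The paper sidesteps this cleanly: since $w(\cdot)$ and $w_*(\cdot)$ only ever evaluate the quadratic form on nonnegative vectors $\yvec\succeq\boldsymbol 0$, the entrywise $(1+o(1))$ factors \emph{do} yield $w_*(\n(\uvec),Q^L)=(1+o(1))\,w_*(\n(\uvec),P^L)$ (and similarly for $P^U$), and then the pseudodefinite property is applied to $P^L$ and $P^U$, which satisfy $\xvec^T P^L\xvec = \tfrac{p}{k^2}\bigl(\sum_i x_i(i-1)\bigr)^2 \geq 0$ exactly, for all real $\xvec$. If you want to keep your route of checking pseudodefiniteness of $Q^U$ directly, you would need a different argument, e.g.\ the power-series identity $-\log(1-a_ia_j)=\sum_{m\geq1}\tfrac{a_i^m a_j^m}{m}$ with $a_i=\sqrt p\,i/k$, exhibiting $Q^U$ as a sum of rank-one PSD matrices. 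The remainder of your argument (corner maximiser corresponds to a union of blocks, $O(n/k)$ discretisation error, using that the optimum and the optimal $|U|$ are both $\Theta(n)$ to kill the relative error) is in the same spirit as the paper, though the paper handles the two inequality directions separately and more carefully—via the maximiser $U^*$ of $M$ for the lower bound on $w$, and via the corner set $W\subseteq[k]$ and $U(W)$ for the upper bound—whereas your one-pass rounding glosses over which direction each perturbation helps; this part could be patched.
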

 \begin{proof}
       Since  $\sum_{t \in [n]} u_t = \Omega( n)$, we  find that
  \begin{equation}\label{pn-bound}
  	M  := \max_{U \subseteq [n]} \frac{1}{|U|} \left(\sum\nolimits_{t \in U} u_t\right)^2 
  	\geq \frac{1}{n}\left(\sum\nolimits_{t \in [n]} u_t\right)^2  =  \Omega(n).
  \end{equation}
  Since $p =o(1)$, we have  
  $\log \frac{1}{1-p xy} =  (1+o(1)) p xy$  uniformly over all $x,y \in [0,1]$. 
  Then,  {by the definition of $w_*(\cdot)$}, we derive that  
 \begin{align*}
 	w_*\left(\n(\uvec), Q^{L}\right) = (1+o(1))   w_*\left(\n(\uvec), P^L\right), \\
 	w_*\left(\n(\uvec), Q^{U}\right) = (1+o(1))   w_*\left(\n(\uvec),P^U\right).
 \end{align*}
 For any $\xvec \in \Reals^k$, we have
 \begin{equation}\label{eq:Pl}
 	\xvec^T P^L\xvec = \dfrac{p}{k^2}\left(\sum\nolimits_{i \in [k]} x_i (i-1)\right)^2 \geq 0.
 \end{equation}
Using  {the pseudodefinite property in Theorem} \ref{l:Qmatrix}(c), we find that 
$
	w_*\left(\n(\uvec),P^L\right) = w\left(\n(\uvec),P^L\right).
$
Similarly, we get that $
	w_*\left(\n(\uvec),P^U\right) = w\left(\n(\uvec),P^U\right).
$  
  Thus, it remains to  show that 
   \begin{equation}\label{M-remain}
   	M
   	 =  \dfrac{1+o(1)}{p}  w\left(\n(\uvec),P^L\right) =  \dfrac{1+o(1)}{p}  w\left(\n(\uvec),P^U\right).
   \end{equation}
   
     Let $U^* \subseteq [n]$ be the set that maximises   
     $\frac{1}{|U|} \left(\sum\nolimits_{t \in U} u_t\right)^2 $, that is,
    { \[
     	M = \frac{1}{|U^*|} \left(\sum\nolimits_{t \in U*} u_t\right)^2.
     \]}
      Using  the trivial bound 
     $\sum\nolimits_{t\in U^*} u_t \leq |U^*|$ and \eqref{pn-bound}, we get that
  \begin{equation}\label{eq:eps2n}
  	\sum\nolimits_{t \in U^*} u_t \geq \frac{1}{|U^*|} \left(\sum\nolimits_{t \in U^*} u_t\right)^2  = {\Omega(n)}.
  \end{equation}
  Let  
  \begin{align*}
  \xvec(U^*):= (x_1,\ldots,x_k)^T \in \Naturals^k, \qquad
  x_i := |\{ t \in [n] \st u_t \in S_i \cap U^*\}.
  \end{align*}
 Combining \eqref{eq:eps2n} and the trivial bound $\|\xvec(U^*)\| = |U^*|\leq n$, we get that 
   \[
   		\sum\nolimits_{t \in U^*} u_t   = (1+O(k^{-1})) \sum_{i \in [k]} \frac{i-1}{k}  x_i = (1+O(k^{-1})) \sum_{i \in [k]} \frac{i}{k} x_i.
   \]
{Due to \eqref{eq:Pl} and  a similar formula for $P^U$, we get that 
	$(\sum\nolimits_{t \in U^*} u_t)^2 $ is equivalent to   $\xvec^T(U^*) P^L\xvec(U^*)$
	and $\xvec^T(U^*) P^U \xvec(U^*)$ up to the factor $p$. 
Recalling  $\|\xvec(U^*)\| = |U^*|$, we get that}
\[
  	 pM
  	 = (1+o(1)) \frac{\xvec^T(U^*) P^L\xvec(U^*)}{\|\xvec(U^*)\|} 
  	 =  (1+o(1)) \frac{\xvec^T(U^*) P^U\xvec(U^*)}{\|\xvec(U^*)\|}. 
   \]
   This implies  
   \[
   {	w\left(\n(\uvec),P^U\right) \geq w\left(\n(\uvec),P^L\right) \geq (1+o(1))pM.}
   \]
   
    For the other direction, using  {the corner maximiser property} in Theorem   \ref{l:Qmatrix}(b), we get that there is $W \subseteq [k]$ such that
    \[
    	w\left(\n(\uvec),P^U\right) =  \frac{p}{\sum\nolimits_{i \in W} n_i} \left(\sum_{i \in W}  \frac{i}{k} n_i  \right)^2.
    \]
    Let $U(W):= \{t \in [n] \st u_t \in \cup_{i\in W} S_i\}$. Then, $\sum_{i \in W} n_i = |U(W)|$.
    We also have 
    \[
    		\sum_{i \in W}  \frac{i-1}{k} n_i  \leq  \sum_{t \in U(W)} u_t \leq \sum_{i \in W}  \frac{i}{k} n_i. 
    \]
   We already established that  $w\left(\n(\uvec),P^U\right) \geq (1+o(1))  pM= \Omega ( p n)$.  Thus, 
   \[
   	\sum_{i \in W}  \frac{i}{k} n_i \geq \frac{1}{\sum\nolimits_{i \in W} n_i} \left(\sum_{i \in W}  \frac{i}{k} n_i  \right)^2  = \Omega(n).
   \]  
   Therefore, 
   \[
    \frac{1}{\sum\nolimits_{i \in W} n_i} \left(\sum_{i \in W}  \frac{i}{k} n_i  \right)^2
   	= (1+O(k^{-1})) \frac{1}{|U|} \left(\sum_{t \in U} u_t\right)^2.
   \]
   This implies 
   \[
   	w\left(\n(\uvec),P^L\right) \leq w\left(\n(\uvec),P^U\right) \leq (1+o(1))pM.
   \]
   This  
   completes the proof of  required bound \eqref{M-remain} and of  the lemma.
  \end{proof}

 We proceed to  the proof of Theorem \ref{T:main4}(b).
  Define two $k \times k$ matrices $\widehat{P}^L = (\widehat{p}_{ij}^L)_{i,j \in [k]}$ and $\widehat{P}^U = (\widehat{p}_{ij}^U)_{i,j \in [k]}$ by 
 \[
 	\widehat{p}_{ij}^L := p\cdot \dfrac{(i-1)+(j-1)}{k}, \qquad  \widehat{p}_{ij}^U :=  p\cdot\dfrac{i+j}{k}. 
 \]
 Then, for any two vertices $a,b \in V(\G_p^{+}) = [n]$, we have 
 \[
 	\widehat{p}_{ij}^L \leq p_{ab}^{+} \leq  \widehat{p}_{ij}^U,
 \]
 where $i=i(a)$ and $j = j(b)$ are such that $u_a \in S_i$ and $u_b \in S_j$. 
  Therefore, there are two random graphs  $\widehat{\G}^L \sim \mathcal G(\n(\uvec),\widehat{P}^L )$ and 
  $\widehat{\G}^U \sim \mathcal G(\n(\uvec),\widehat{P}^U )$ such that
 $\widehat{\G}^L \subseteq \G_p^{+} \subseteq \widehat{\G}^U$.  
  Furthermore, we find that 
 \[
 		\chi\left(\widehat{\G}^{U}\right) \leq  \chi\left( \G_p^{+}\right)  \leq  \chi\left(\widehat{\G}^{L}\right).
 \]
Then  Theorem \ref{T:main4}(b) follows  immediately by combining  Theorem \ref{Thm_sbm} and  the following lemma. 
\begin{lemma}\label{L:w-star-star2} Let the assumptions of Theorem \ref{T:main4} hold.
  Let $\widehat{Q}^L= Q(\widehat{P}^L)$ and $Q^U = Q(\widehat{P}^U)$ be defined according to \eqref{Q_def}.  Then
 	\begin{align*}
 	      \sum_{t\in [n]}u_t
 	=   \dfrac{1+o(1)}{p}	w_*\left(\n(\uvec), \widehat{Q}^{L}\right)  
    =\dfrac{1+o(1)}{p} 		w_*\left(\n(\uvec), \widehat{Q}^{U}\right)   = \Omega(n).
 	\end{align*}
 \end{lemma}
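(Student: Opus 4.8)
The plan is to mirror the proof of Lemma~\ref{L:w-star-star}, exploiting that the matrices $\widehat{P}^{L}$ and $\widehat{P}^{U}$ are now \emph{additively} degenerate rather than multiplicatively, which actually shortens the computation. Two reductions come first. Since $p=o(1)$ and every entry of $\widehat{P}^{L}$ and $\widehat{P}^{U}$ is at most $2p=o(1)$, we have $\log\tfrac{1}{1-x}=(1+o(1))x$ uniformly for $x\in[0,2p]$, so $\widehat{Q}^{L}$ and $\widehat{P}^{L}$ (and likewise $\widehat{Q}^{U}$ and $\widehat{P}^{U}$) agree entrywise up to a factor $1+o(1)$. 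Directly from the definitions \eqref{w_def} and \eqref{def:mu-star}, both $w(\xvec,\cdot)$ and $w_{*}(\xvec,\cdot)$ are monotone and $1$-homogeneous in the matrix argument over matrices with non-negative entries, hence
\[
w_{*}(\n(\uvec),\widehat{Q}^{L})=(1+o(1))\,w_{*}(\n(\uvec),\widehat{P}^{L}),\qquad
w_{*}(\n(\uvec),\widehat{Q}^{U})=(1+o(1))\,w_{*}(\n(\uvec),\widehat{P}^{U}).
\]
Next, for any $\yvec\in\Reals^{k}$,
\[
\yvec^{T}\widehat{P}^{U}\yvec=\frac{p}{k}\sum_{i,j\in[k]}(i+j)y_{i}y_{j}=\frac{2p}{k}\Bigl(\sum_{i\in[k]}iy_{i}\Bigr)\Bigl(\sum_{j\in[k]}y_{j}\Bigr),
\]
and the analogous identity with the first factor $\sum_{i}iy_{i}$ replaced by $\sum_{i}(i-1)y_{i}$ holds for $\widehat{P}^{L}$; in particular both quadratic forms vanish on $\{\yvec:\sum_{i}y_{i}=0\}$, so the pseudodefinite property of Theorem~\ref{l:Qmatrix}(c) applies and $w_{*}(\n(\uvec),\widehat{P}^{L})=w(\n(\uvec),\widehat{P}^{L})$, $w_{*}(\n(\uvec),\widehat{P}^{U})=w(\n(\uvec),\widehat{P}^{U})$.

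Having made these reductions, the remaining task is to evaluate $w(\n(\uvec),\widehat{P}^{L})$ and $w(\n(\uvec),\widehat{P}^{U})$ and to compare them with $\sum_{t\in[n]}u_{t}$. For $\zerovec\preceq\yvec\preceq\n(\uvec)$ we have $\sum_{j}y_{j}=\|\yvec\|$, so the identity above gives $\tfrac{\yvec^{T}\widehat{P}^{U}\yvec}{\|\yvec\|}=\tfrac{2p}{k}\sum_{i\in[k]}iy_{i}$, which is coordinatewise nondecreasing in $\yvec$; hence the maximum in \eqref{w_def} is attained at $\yvec=\n(\uvec)$, and, writing $n_{i}=n_{i}(\uvec)$,
\[
w(\n(\uvec),\widehat{P}^{U})=\frac{2p}{k}\sum_{i\in[k]}in_{i},\qquad w(\n(\uvec),\widehat{P}^{L})=\frac{2p}{k}\sum_{i\in[k]}(i-1)n_{i}
\]
(no genuine subset optimisation is needed here, in contrast with the multiplicative case). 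Finally, for each $t\in[n]$ with $u_{t}\in S_{i}$ we have $\tfrac{i-1}{k}<u_{t}\le\tfrac{i}{k}$, so $\tfrac{i}{k}=u_{t}+O(k^{-1})$ and $\tfrac{i-1}{k}=u_{t}+O(k^{-1})$; summing over $t$ grouped into the blocks $S_{i}$ and using $\sum_{i\in[k]}n_{i}=n$ gives
\[
\frac{1}{k}\sum_{i\in[k]}in_{i}=\sum_{t\in[n]}u_{t}+O(n/k),\qquad \frac{1}{k}\sum_{i\in[k]}(i-1)n_{i}=\sum_{t\in[n]}u_{t}+O(n/k).
\]
Since $k\rightarrow\infty$ and $\sum_{t\in[n]}u_{t}=\Omega(n)$ by hypothesis, the error $O(n/k)$ is $o\bigl(\sum_{t}u_{t}\bigr)$; substituting into the two displays for $w(\n(\uvec),\widehat{P}^{U})$ and $w(\n(\uvec),\widehat{P}^{L})$ and combining with the two reductions of the previous paragraph completes the proof.

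The argument is essentially routine given Lemma~\ref{L:w-star-star}, so I do not anticipate a genuine obstacle. The two points calling for a little care are the pseudodefinite check needed to invoke Theorem~\ref{l:Qmatrix}(c) — immediate because each of $\yvec^{T}\widehat{P}^{L}\yvec$ and $\yvec^{T}\widehat{P}^{U}\yvec$ factors as $(\text{linear form})\cdot\bigl(\sum_{i}y_{i}\bigr)$ and hence vanishes on $\{\sum_{i}y_{i}=0\}$ — and the transfer from $\widehat{Q}$ to $\widehat{P}$, which relies on monotonicity of $w$ and $w_{*}$ in the matrix argument and is valid only because $p=o(1)$ forces all entries of $\widehat{P}^{L}$ and $\widehat{P}^{U}$ to tend to $0$. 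Everything else is the same elementary discretisation bookkeeping as in the proof of Lemma~\ref{L:w-star-star}.
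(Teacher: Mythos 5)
Your plan follows the paper's proof closely: $p=o(1)$ lets you replace $\widehat Q^{L},\widehat Q^{U}$ by $\widehat P^{L},\widehat P^{U}$; the quadratic forms factor as $(\text{linear form})\cdot(\sum_i y_i)$ so the pseudodefinite condition of Theorem~\ref{l:Qmatrix}(c) holds and $w_*=w$; the function $\yvec\mapsto\frac{\yvec^T\widehat P\,\yvec}{\|\yvec\|}$ is coordinatewise nondecreasing on the orthant so the maximum is attained at $\n(\uvec)$; and the discretisation error is $O(n/k)=o(n)$. All of these steps are present (explicitly or implicitly) in the paper's argument, and the pseudodefinite check you carry out is a small variation on the paper's ``by the definition of $w_*$'' justification (which is equivalent here because $w(\cdot,\widehat P^{L})$ is in fact linear).

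However, there is a genuine numerical mismatch you should not paper over. Your expansion
\[
\yvec^{T}\widehat P^{U}\yvec=\frac{2p}{k}\Bigl(\sum_i iy_i\Bigr)\Bigl(\sum_j y_j\Bigr),\qquad
w\bigl(\n(\uvec),\widehat P^{L}\bigr)=\frac{2p}{k}\sum_i (i-1)n_i,
\]
is arithmetically correct and carries a factor $2$ coming from the two cross terms in the symmetric sum $\sum_{i,j}\bigl((i-1)+(j-1)\bigr)y_iy_j$. The paper's displayed identity
\[
w\bigl(\xvec,\widehat P^{L}\bigr)=p\cdot\sum_{i\in[k]}\frac{i-1}{k}\,x_i
\]
is missing exactly this factor of $2$, and the lemma's stated normalisation $\sum_t u_t=\frac{1+o(1)}{p}\,w_*(\cdot)$ matches the paper's (erroneous) intermediate formula rather than the correct one. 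Thus if you ``substitute into the two displays and combine,'' as you say, what you actually obtain is $\sum_{t\in[n]}u_t=\frac{1+o(1)}{2p}\,w_*\bigl(\n(\uvec),\widehat Q^{L}\bigr)$, \emph{not} the lemma as stated. Note that it is your version, not the printed lemma, that is consistent with Theorem~\ref{T:main4}(b): with $q^*=(1+o(1))2p$ one has $(1-\sigma)\log\|\n\|=(1+o(1))\log(pn)$, and Theorem~\ref{Thm_sbm} then gives $\chi(\G_p^{+})=(1+o(1))\frac{w_*}{2\log(pn)}=(1+o(1))\frac{p}{\log(pn)}\sum_t u_t$ precisely when $w_*=(1+o(1))\,2p\sum_t u_t$. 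You should flag the factor-of-two discrepancy explicitly instead of asserting that the computation ``completes the proof'' of the lemma as printed.
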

 \begin{proof}
 		  Since $p =o(1)$, we have $\log \frac{1}{1-p (x+y)} =  (1+o(1)) p (x+y)$ uniformly over 
 		  $x,y \in [0,1]$. Then,  {by the definition of $w_*(\cdot)$}, we observe  that  
 \begin{align*}
 	w_*\left(\n(\uvec), \widehat{Q}^{L}\right) = (1+o(1))   w_*\left(\n(\uvec),\widehat{P}^{L}\right), \\
 	w_*\left(\n(\uvec),\widehat{Q}^{U}\right) = (1+o(1))   w_*\left(\n(\uvec),\widehat{P}^{U}\right).
 \end{align*}
 For any $\xvec \in \pReals^k$, we have
 \[
 	w\left(\xvec, \widehat{P}^{L}\right)  = {\max_{\boldsymbol{0}\prec \yvec \prec \xvec}} \frac{\yvec^T \widehat{P}^{L}\yvec}{\|\yvec\|} = p \cdot {\max_{\boldsymbol{0}\prec \yvec \prec \xvec}}  \sum_{i \in [k]}  \frac{i-1}{k} y_i
 	=p \cdot \sum_{i \in [k]}  \frac{i-1}{k} x_i.
 \] 
 By the  definition of $w_*(\cdot)$, we find that 
 \[
 w_*\left(\n(\uvec), \widehat{P}^L\right) =  p\cdot \sum_{i \in [k]}  \frac{i-1}{k} n_i.
 \]
Observing that  
\[ 
 \sum_{i \in [k]}  \frac{i}{k} n_i \geq  \sum_{t\in [n]} u_t \geq   \sum_{i \in [k]}  \frac{i-1}{k} n_i  
 \]
 and recalling $ \sum_{t\in [n]} u_t = {\Omega(n)}$,   we derive
 \[
 	w_*\left(\n(\uvec), \widehat{P}^L\right) = (1+O(k^{-1})) 
 	 p \cdot\sum\nolimits_{t\in [n]}u_t.
 \]
 Similarly, we prove $w_*\left(\n(\uvec), \widehat{P}^U\right) =  (1+O(k^{-1})) p \cdot
 	 \sum_{t\in [n]}u_t.$ This completes the proof.
 \end{proof}


\section{Weighted independence number}\label{S:weighted}

A set $U \subseteq V(G)$ is an \emph{independent set} of a graph $G$ if  the induced graph $G[U]$ has no edges. Let  $\I(G)$ denote the set of all the independent sets of $G$. The  \emph{independence number}  $\alpha(G)$ equals the size of 
a largest independent set of $G$.  
It is  well known that (see, for example \cite{Luczak1991, Frieze1990}) if $np \rightarrow \infty$ and $p<1-\eps$ {for a constant $\eps\in (0,1)$,} then  whp
\begin{equation}\label{chi-relation}
	\chi(\G(n,p))   = (1+o(1)) \frac{n}{ \alpha(\G(n,p))}.
\end{equation}
 That is,  for an asymptotically optimal colouring  of $\G(n,p)$,  almost all vertices  are covered with colour classes of approximately  equal size $\alpha(\G(n,p))$.

 One may think that, to approach the chromatic number  of inhomogeneous random graphs, one can also start with its independence number.
 In fact,   Dole\v{z}al et al.  \cite{DHM2017} studied  the clique  number in  inhomogeneous random graphs. Note that the clique number of a graph equals the independence number of  its complement.
   However,  we find little use of the results  of  \cite{DHM2017} in determining the chromatic number of  a random graph $\G\sim \SBM$ from  the stochastic block model.   
    Unlike   the homogeneous binomial random graph $\G(n,p)$,    some parts of the  random graph $\G\sim \SBM$
 will typically contain substantially larger independent sets than other parts of the graph so one  can not achieve an optimal colouring using colour classses of approximately same size.

 To take the inhomogeneity  of $\G \sim \M(\n,P)$ into account, we assign special weights to  subsets of vertices (depending on the edge probabilities)
and  introduce  a new parameter, called  \emph{weighted independence number}, which is the maximal weight of an independent set.  
Formally,  for a set $U \subseteq V(\G)$, define
\[
	h(U) = h(U,\n,P) := \frac{- \log (\Pr(U \in \I(\G)))}{|U|}.  
\]
Then, for a graph $G$ on vertex set  $V(G)=V(\G)$, let
\begin{equation}\label{def:alpha_w}
\begin{aligned}
	\alpha_h(G) = \alpha_h(G, \n,P) := \max_{ U \in\I(G),\, U\neq  \emptyset} h(U).
\end{aligned}
\end{equation}
It might be not obvious but nevertheless true that
 the weights $h(U)$  are designed in such a way that   all maximal independent  sets $U$  in the random graph $\G$ have similar weights whp.  This is 
  a natural generalisation of the idea of the balanced colouring of $\G(n,p)$ except we use the weight instead of the size of a colour class.

In this section,  we show, in particular  that,  under the assumptions of  Theorem \ref{Thm_sbm} and provided that {not all}  blocks  $B_i$ are very small, 
 the quantity $\alpha_{h}(\G)$ is concentrated around $(1-\sigma) \log \|\n\|$ whp; see Theorem \ref{T:independent}.
Moreover, we establish fast decreasing tail bounds for the probability of $\alpha_{h}(\G)$ being too large or too small; see 
{Lemmas \ref{l:upper_gen} and \ref{L:janson},} respectively. 
Lemma \ref{l:upper_gen} almost immediately leads to
the proof of Theorem~\ref{T:lower}.   Even though,
 Lemma \ref{L:janson} does not immediately give Theorem~\ref{T:upper}, 
 it will be the crucial instrument  for our construction of an optimal colouring of $\G$ in further sections.

Let $Q= Q(P)$ be defined by \eqref{Q_def}, where $P$ is the matrix of edge probabilities for $\G \sim \M(\n,P)$.
For simplicity,  everywhere in this section, let
\[
	 w(\cdot) \equiv w(\cdot,Q) \quad \text{and} \quad  w_*(\cdot) \equiv w_*(\cdot,Q);
\]
 see  \eqref{w_def}, \eqref{def:mu-star} for definitions.  Let  $q^*$ and  $\hat{q}(\cdot)$ be defined  according to \eqref{def:q-star}.
   In addition, we consider the vector-valued function $\b: 2^{V(\G)} \ \to \N^k$ that maps  $U \subseteq V(\G)$
  into $\b(U)$ defined by
\begin{equation*}
\b(U) = (b_1(U), \ldots, b_k(U))^T 
\quad\text{with}\quad  b_i(U):= |U\cap B_i| \quad\text{for }  i\in[k].
\end{equation*} 
Here, $B_i$ are the blocks of vertices in the stochastic block model $\M(\n,P)$.
Note that, for any $U \subseteq V(\G)$,  we have that $\|\b(U)\| = |U|$ and 
\begin{align}
 \b(U)^T Q\, \b(U) &= - 2\log  \left( 	\Pr(U \in  \I(\G)) \right)  + \sum_{i\in [k]} q_{ii}b_i(U)
 \label{eq:indMatrix1}
 \\
 		&\leq - 2\log  \left( 	\Pr(U \in  \I(\G)) \right)  + q^* |U|.
\label{eq:indMatrix2}
\end{align}




\subsection{Lower tail bound: proof of Theorem \ref{T:lower}}\label{S:lower}
{First, we estimate the probability of $\alpha_{h}(\G) $ to be large  for  a general random graph $\G$  with independent adjacencies.} 

\begin{lemma}\label{l:upper_gen}
	Let $\G$ be a random graph on $n$ vertices {where edges  appear independently of each other.} 
	Assume	  $s_t e^{t} \geq 6 n$ for some $t>0$,  where 
	\[
	  s_t: = \min\left\{|U| \st \emptyset \neq U\subseteq V(\G), \  \Pr(U\in \I(\G)) \leq e^{-t |U|}\right\}.
	  \] 
	   Then
	\[
	\Pr(\alpha_{h}(\G) \geq  t) \leq 2^{1-s_t}.
	\] 
\end{lemma}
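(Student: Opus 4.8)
The plan is to rewrite the event $\{\alpha_h(\G)\ge t\}$ as a union of elementary events and then apply a union bound. The key observation is that $h(U)$ is a \emph{deterministic} quantity: it depends only on the edge probabilities, not on the realisation of $\G$, and for a nonempty $U$ one has $h(U)\ge t$ if and only if $\Pr(U\in\I(\G))\le e^{-t|U|}$. Consequently
\[
\{\alpha_h(\G)\ge t\}=\bigcup_{\substack{\emptyset\neq U\subseteq V(\G)\\ \Pr(U\in\I(\G))\le e^{-t|U|}}}\{U\in\I(\G)\},
\]
and by the very definition of $s_t$ every set $U$ occurring in this union satisfies $|U|\ge s_t$.

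Next I would apply the union bound, grouping the candidate sets by their size $s=|U|$. Each term $\Pr(U\in\I(\G))$ is at most $e^{-ts}$ by the defining condition, and there are at most $\binom ns$ sets of size $s$, so
\[
\Pr(\alpha_h(\G)\ge t)\le\sum_{s\ge s_t}\ \sum_{\substack{|U|=s\\ \Pr(U\in\I(\G))\le e^{-ts}}}\Pr(U\in\I(\G))\le\sum_{s\ge s_t}\binom ns e^{-ts}.
\]

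Finally I would estimate this tail sum. Using $\binom ns\le(en/s)^s\le(en/s_t)^s$ for $s\ge s_t$ together with the hypothesis $s_t e^{t}\ge 6n$, the base of the exponential satisfies $\frac{en}{s_t e^{t}}\le\frac e6<\frac12$, hence
\[
\Pr(\alpha_h(\G)\ge t)\le\sum_{s\ge s_t}\Big(\frac{en}{s_t e^{t}}\Big)^{s}\le\sum_{s\ge s_t}2^{-s}=2^{1-s_t},
\]
as claimed. This is essentially a first--moment computation, and I do not expect a serious obstacle; the one point requiring care is to union--bound only over those vertex sets whose independence probability already meets the threshold $e^{-t|U|}$ — this is what makes each summand genuinely small and forces every such set to have size at least $s_t$ — so that no monotonicity of the events $\{U\in\I(\G)\}$ is needed, only the straightforward binomial estimate and the geometric series.
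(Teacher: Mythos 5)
Your proof is correct and takes essentially the same approach as the paper's: a first-moment union bound over sets whose deterministic weight $h(U)$ meets the threshold (which forces $|U|\ge s_t$), followed by the binomial estimate $\binom{n}{s}\le(en/s)^s$, the hypothesis $s_te^t\ge 6n$, and a geometric series. The paper phrases it via counting variables $X_s$ and bounds $(en/(se^t))^s\le 2^{-s}$ directly using $s\ge s_t$, while you substitute $s_t$ for $s$ in the denominator before invoking the hypothesis; these are the same computation.
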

\begin{proof}
	Let $X_s$ denote the number  of independent sets $U$ of size $s$  in $\G$ such that 
	\[
		\Pr(U \in \I(\G)) \leq e^{-t |U|}.
	\]
	By definition of $s_t$, we have that $X_s=0$ for any $s<s_t$.   If  $s\geq s_t$ then we bound 
	\begin{align*}	
	\Pr(X_s>0) &\leq \E X_s \leq 
	\sum_{U}   \Pr( U \in  \I(\G)) \\
	& 
	\leq \binom{n}{s} e^{- ts} \leq  \left( \frac{en  }{s e^{t}}  \right)^s \leq 2^{-s},
	\end{align*}
	where the sum is over all $U$ that contribute to $X_s$.
	Thus, we can bound 
	\begin{align*}
	\Pr(\alpha_{h}(\G) \geq   t)  \leq \sum_{s=s_t}^n \Pr(X_s>0)
	\leq  \sum_{s=s_t}^n  2^{-s} \leq 2^{1-s_t},
	\end{align*}
	which concludes the proof.
\end{proof}

{Next, applying Lemma \ref{l:upper_gen}  to $\G \sim \SBM$, we derive the required  probability bound for the event that the chromatic number $\chi(\G)$ is small.} 
\begin{proof}[Proof of Theorem \ref{T:lower}]
	Take $t:= \log (q^*\|\n\|)$.
	To apply Lemma \ref{l:upper_gen},  we need  to bound the quantity $s_t$ in Lemma \ref{l:upper_gen}. 
    If $U$ is such that $\Pr(U \in \I(\G)) \leq e^{-t |U|}$, then using \eqref{eq:indMatrix1}, we get that
   \[
   		t |U| \leq -  \log \left(\Pr(U \in \I(\G))\right) \leq \dfrac12 |U|^2 \max_{i,j\in {[k]}}q_{ij}. 
   \]
	Then, by the assumptions, we get that
	\begin{equation}\label{eq:st}
	s_t \geq \frac{2t}{ \max_{i,j\in [k]}q_{ij}}  = \frac{2 \log (q^*\|\n\|)} { \max_{i,j\in [k]}q_{ij}} \rightarrow \infty 
	\end{equation}
		and 
	\[
	s_t e^t \geq  \frac{2t e^t}
	{\max_{i,j\in [k]}q_{ij}} = \frac{2 q^*   \|\n\|   \log (q^*\|\n\|)} { \max_{i,j\in [k]}q_{ij}}\gg \|\n\|.
	\]
	Applying Lemma \ref{l:upper_gen}, we  find that
	\begin{equation}\label{eq:alpha}
		\Pr(\alpha_h(\G) \geq \log (q^*\|\n\|))  \leq  2^{1-s_t}.
	\end{equation}
 Next, using {the corner maximiser property} in Theorem \ref{l:Qmatrix}(b) and \eqref{eq:indMatrix2}, we   find that, for any $U \in \I(\G)$,
	\begin{equation}\label{eq:phi-w}
		 w(\b(U)) = \max_{\emptyset \neq W \subseteq U} \dfrac{\b(W)^T Q \b(W)}{|W|} 
		 \leq   \max_{\emptyset \neq W \subseteq U}  \left(-\dfrac{2\Pr(W \in \I(\G))}{|W|} + q^*\right)
		 \leq 2 \alpha_h(\G) + q^*.
	\end{equation}
	In the above, we also used that if $W\subseteq U \in \I(\G)$ then $W \in \I(\G)$.
{	Recall that 
	$\log (q^*\|\n\|) = \Theta(\log \|\n\|)$ by \eqref{ass_sbm} and
	 $q^* \leq \max_{i,j \in [k]} q_{ij} \ll \log \|\n\|$ by \eqref{ass_1}. Thus, if   $\alpha_h(\G) \leq \log (q^*\|\n\|)$ then, by \eqref{eq:phi-w}, we have that} 
	\[
			\max_{U \in \I(\G)}  w(\b(U))  \leq  {2}\log (q^*\|\n\|)) + q^* =    (2+o(1)) \log (q^*\|\n\|)).
	\]
	
	Let $\{U_i\}_{i =1, \ldots, \chi(\G)}$ be the partition of $V(\G)$
	into colour classes of any  optimal colouring of  $\G$. Cosidering the system consisting of vectors $\b(U_i)$ for $i =1, \ldots, \chi(\G)$,   and recalling definition \eqref{def:mu-star}, we find  that
	\[
	w_*(\n) \leq \sum_{i =1}^{\chi(\G)} w(\b(U_i)).
	\]
	 We conclude that,   with probability at least $1-  2^{1-s_t}$,  
	\[
	\chi(\G) \geq \frac{\sum_{i =1}^{\chi(\G)} w(\b(U_i))}{\max_i w(\b(U_i))}
	\geq   \frac{w_*(\n)}{(2+o(1)) \log (q^*\|\n\|)} \geq (1-\eps)  \frac{w_*(\n)}{2 \log (q^*\|\n\|)}.
	\]
	Using \eqref{eq:st}, we get that
	\[
		2^{1-s_t}
	= \exp\left( - \Omega\left( \frac{ \log(q^*\|\n\|)}{\max_{i,j\in [k]}q_{ij}} \right)\right).
      \]
      This completes the proof.
\end{proof}


\subsection{Existence of heavy independent sets}\label{S:existence_he}

We  consider a special class of sets   distributed between the blocks $B_1, \ldots ,B_k$  proportionally to its sizes (up to rounding). For a vector $\xvec = (x_1,\ldots,x_k)^T \in \Reals^k$, denote 
\[ \lfloor \xvec\rfloor := (\lfloor x_1\rfloor, \ldots, \lfloor x_k\rfloor)^T
\qquad \text{and} \qquad x_*:=\min_{i\in[k]}x_i.
\]
For a positive real $\nu$,  let  
$\I_{ \nu}(\G)$ denote the family  of  
independent sets  $U\subseteq \I(\G)$  such that 
$\b(U) = \lfloor \nu \n \rfloor$. 

\begin{lemma} \label{L:janson}
	Let $\G \sim \SBM$, where $P = (p_{ij})_{i,j\in [k]}$ is such that $p_{ij} = p_{ji}$ and  $0\leq p_{ij}<1$ for all $i,j\in [k]$.
	Let $Q= Q(P)$ be as in \eqref{Q_def} and $\sigma \in [0,\sigma_0)$ for some fixed $0<\sigma_0<\frac12$.
	Assume that   $
	\|\n\| \rightarrow \infty$, 
	$ w (\n) \geq \|\n\|^{1-\sigma}, 
	$
	\begin{equation}
	 n_* = \|\n\|^{1+o(1)}, \qquad  n_* \gg \frac{w(\n)}{\log \|\n\|}, \label{eq:n*bound}
	\end{equation}
	where  $n_*:=\min_{i\in[k]}n_i$.
	Then, {there exists $\nu \in \pReals$ such that} 
	$
	\nu = (2 + o(1)) \dfrac{\log (w(\n))}{w (\n)}
	$
 and
	\begin{equation*} 
	\Pr \left(\I_{\nu}(\G) = \emptyset \right) \leq
	\exp\left(   -  \|\n\|^{ 2-4\sigma +o(1)}  \right). 
	\end{equation*}
\end{lemma}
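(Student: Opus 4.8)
The plan is to use a second-moment / Janson-inequality argument applied to the random variable $X_\nu := |\I_\nu(\G)|$, the number of independent sets whose intersection with each block $B_i$ has size exactly $\lfloor \nu n_i\rfloor$. First I would fix the threshold value of $\nu$. By the first-moment heuristic of \eqref{w-intuition}, the expected number of such "balanced" sets is (up to lower-order factors) $\exp\bigl(-\tfrac12 \nu^2 \n^T Q\n + O(\nu\|\n\|)\bigr)\cdot \prod_i\binom{n_i}{\lfloor\nu n_i\rfloor}$, and using Stirling together with $\lfloor\nu\n\rfloor^T Q\lfloor\nu\n\rfloor = (1+o(1))\nu^2\n^T Q\n$ and $w(\n) = \n^T Q\n/\|\n\|$ (since we may WLOG take $\yvec = \n$ the corner maximiser — actually we need $w(\n)$, so I'd first reduce to the sub-block $\yvec\preceq\n$ achieving $w(\n)$ by the corner-maximiser Theorem~\ref{l:Qmatrix}(b), replacing $\n$ by that corner; the hypotheses $w(\n)\ge\|\n\|^{1-\sigma}$ and \eqref{eq:n*bound} are what let us do this safely). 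Setting the exponent to be comfortably positive forces $\nu\log(1/\nu)\cdot\|\n\| \approx \tfrac12\nu^2 w(\n)\|\n\|$, i.e. $\nu = (2+o(1))\frac{\log w(\n)}{w(\n)}$, exactly the claimed value. With this choice one checks $\E X_\nu = \exp\bigl(\Theta(\nu\|\n\|)\bigr) = \|\n\|^{\,2-2\sigma+o(1)}$ or larger — in any case growing like a positive power of $\|\n\|$; this is the routine Stirling computation I would not grind through.

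Next I would estimate $\Delta := \sum_{U\sim U'}\Pr(U,U'\in\I(\G))$, the sum over ordered pairs of distinct balanced sets whose union is not independent-disjointly, i.e. pairs that share at least one edge-slot of dependence — concretely pairs with $|U\cap U'|\ge 1$ that are not independent in the product sense. Grouping pairs by the overlap vector $\m = \b(U\cap U') \preceq \lfloor\nu\n\rfloor$, one gets
\[
\Delta \;=\; \sum_{\boldsymbol 0\neq\m\preceq\lfloor\nu\n\rfloor}\;(\text{count of such pairs})\cdot \Pr(U\in\I)\,\Pr(U'\in\I)\,\bigl(\Pr(U\cap U'\in\I)\bigr)^{-1},
\]
and the standard manipulation yields $\Delta/(\E X_\nu)^2 = \sum_{\m}\exp\bigl(\m^T Q\m + O(\|\m\|) - |\m|\log(1/\nu) + \cdots\bigr)$ after dividing out. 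The point is that the correction factor per unit of overlap is $\exp\bigl(O(\|\m\|\cdot \nu\,q^* \cdot \text{stuff}) - \|\m\|\log(1/\nu)\bigr)$, and since $\log(1/\nu) = (1+o(1))\log(w(\n)) \to\infty$ dominates the overlap gain (here the hypothesis \eqref{ass_1}-type control on $\max q_{ij}$, inherited via the blanket assumptions, and $\sigma<\tfrac12$, matter), the sum over $\m$ is dominated by its smallest term $\|\m\|=1$, giving $\Delta \le (\E X_\nu)^2 \cdot \|\n\|^{1+o(1)}/w(\n)\cdot(\text{something})$, or more crudely $\Delta = o\bigl((\E X_\nu)^2\bigr)$ provided $\E X_\nu$ beats the overlap sum. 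Actually for the stated super-exponentially small failure probability I would instead feed these into Janson's inequality in the form $\Pr(X_\nu = 0)\le \exp\bigl(-\min\{\E X_\nu/2,\ (\E X_\nu)^2/(2\Delta)\}\bigr)$ (or the multiplicative version $\exp(-(\E X_\nu)^2/(2\Delta) + \ldots)$), and show both $\E X_\nu$ and $(\E X_\nu)^2/\Delta$ are at least $\|\n\|^{\,2-4\sigma+o(1)}$.

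The main obstacle, I expect, is controlling $\Delta$ uniformly over all overlap vectors $\m$ when the matrix $Q$ is allowed to be inhomogeneous and some entries may be large (close to $\log\frac{1}{1-p}$ with $1-p$ small) while others vanish: the per-overlap gain $\m^T Q\m$ can be as large as $q^*\|\m\|^2$, so one genuinely needs the gain from $-|\m|\log(1/\nu)$ and the entropy/counting deficit to win even when $\|\m\|$ is a constant fraction of $\nu\|\n\|$; this is where $\sigma_0<\tfrac12$ is used (it guarantees $w(\n)\gg\|\n\|^{1/2}$, hence $\nu\|\n\|\gg\|\n\|^{1/2}$ and $\log(1/\nu)$ is large, which is what makes the whole $\m$-sum a convergent geometric-type series dominated by $\|\m\|=1$). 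A secondary technical point is the floor functions $\lfloor\nu n_i\rfloor$: I would absorb the rounding into the $o(1)$'s using $n_* = \|\n\|^{1+o(1)}$ and $\nu\gg 1/n_*$ (which follows from $n_*\gg w(\n)/\log\|\n\|$ in \eqref{eq:n*bound}), so that $\lfloor\nu\n\rfloor = (1+o(1))\nu\n$ coordinate-wise and all the quadratic forms scale as expected. Finally I would record that the constant in the exponent of $\Pr(\I_\nu(\G)=\emptyset)$ comes out as $\|\n\|^{2-4\sigma+o(1)}$ because $(\E X_\nu)^2/\Delta \gtrsim \E X_\nu \gtrsim \|\n\|^{2-2\sigma+o(1)}$ in the clean case, but the worst pairs (those with $\m^T Q\m$ near the top) degrade the $(\E X_\nu)^2/\Delta$ bound by at most a factor $\|\n\|^{2\sigma+o(1)}$, leaving $\|\n\|^{2-4\sigma+o(1)}$ — matching the bound in Theorem~\ref{T:upper}, as it must.
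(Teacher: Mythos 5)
Your overall strategy --- choose $\nu$ from the first-moment threshold, then apply Janson's inequality to $|\I_\nu(\G)|$ and control the dependent pairs via their overlap vector $\m$ --- is exactly the paper's. But two steps of your plan would not go through as written. First, the ``reduction to the corner maximiser $\yvec\preceq\n$'' is not valid and is not needed. The lemma requires a set $U$ with $\b(U)=\lfloor\nu\n\rfloor$, i.e.\ taking $\lfloor\nu n_i\rfloor$ vertices from \emph{every} block, so you cannot replace $\n$ by a sub-vector with some zero coordinates without changing what you are proving (and there is no way to extend an independent set supported on the corner to one supported on all of $\n$ for free). The paper sidesteps this entirely: it defines $\nu$ in terms of $w(\n)$ and only ever uses the \emph{one-sided} inequality $\dfrac{\l^T Q\l}{\|\l\|}\le \nu\, w(\n)$, which follows from the monotonicity/scaling property of Theorem~\ref{l:Qmatrix}(a) applied to $\nu^{-1}\l\preceq\n$; since this bounds the quadratic penalty from above, it only helps when lower-bounding $\E|\I_\nu(\G)|$. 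Relatedly, the leading-order form $\nu=(2+o(1))\log(w(\n))/w(\n)$ alone is not enough to make the first moment go to infinity --- with $\nu=2\log w(\n)/w(\n)$ exactly one gets $\nu^{-1}e^{-\nu w(\n)/2}\approx 1/(2\log w(\n))<1$ --- so the paper's explicit lower-order corrections (the $-2\log\log w(\n)-\log(\|\n\|/n_*)-r(\n)$ terms, with $w(\n)/n_*\ll r(\n)\ll\log\|\n\|$) are doing real work there and have to be kept.

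Second, your accounting of the overlap sum is off. Pairs with $|U\cap U'|\le 1$ share no vertex pair and hence their indicator events are already independent, so $\Delta$ ranges over $\|\m\|\ge 2$, not $\|\m\|\ge 1$, and the dominant contribution is the $\|\m\|=2$ term; that term is of order $\bigl(\|\n\|/(w(\n))^2\bigr)^2\le\|\n\|^{4\sigma-2+o(1)}$, which is precisely where $\|\n\|^{2-4\sigma+o(1)}$ comes from. Your heuristic that the sum is a ``convergent geometric-type series dominated by the smallest $\|\m\|$'' is too coarse to carry this through: the quantity $\theta_m:=\max_{\|\m\|=m,\,\m\preceq\l}\m^T Q\m/(2\|\m\|)$ needs two different bounds, $\theta_m\le m\,w(\n)/(2n_*)$ (monotonicity, good for small $m$) and $\theta_m\le \nu w(\n)/2$ (scaling, good for $m$ near $\|\l\|$ where one combines it with the $1/m!$ factor), and the paper splits the sum at a threshold $m_0$ chosen between $\log\|\n\|/r(\n)+n_*/w(\n)$ and $n_*\log\|\n\|/w(\n)$. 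Your qualitative intuition --- that $\sigma<\tfrac12$ guarantees $\log(1/\nu)$ is large enough to beat the overlap gain --- is correct, but the threshold $m_0$ and the two distinct $\theta_m$ bounds are what turn that intuition into an actual estimate.
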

\begin{proof}
	Since $n_* \gg \dfrac{w(\n)}{\log \|\n\|}$, we can find some  $r(\n)$ such that 
	\begin{equation}\label{def_r}
	\dfrac{w(\n)}{n_*}    \ll r(\n) \ll\log \|\n\|.
	\end{equation}
	For example, one can take $r(\n) := \left(\dfrac{w(\n)}{n_*}   \log \|\n\|\right)^{\frac12}$.
	Define
	\begin{equation}\label{def:nu}
	\nu := \frac{2}{w (\n)} \bigg(   \log (w (\n)) - 2 \log \log(w (\n)) -   \log \left( \dfrac{ \|\n\|}{n_*}
	\right)  -  r(\n)\bigg). 
	\end{equation}
	Note that the assumptions imply that 
	\begin{equation}\label{eq:nuh}
	\log (w (\n)) \geq \frac12 \log \|\n\|, \qquad \nu = (2 + o(1)) \frac{\log (w(\n))}{w (\n)}.
	\end{equation}
	Let  $\l =(\ell_1,\ldots, \ell_k) = \lfloor \nu \n \rfloor$. That is, we have $\l = \b(U)$  
	for all $U \in \I_{\nu}(\G)$.  
	Using the assumptions,  we get, for all $i \in [k]$, 
	\begin{equation}\label{eq:s+l}
	\ell_i = (2 + o(1))  \frac{n_i \log (w(\n))}{w (\n)}
	\geq    (2 +o(1))  \frac{n_* \log (w(\n))}{w (\n)} 
	\gg 1.
	\end{equation}
	Observe that the number of ways to pick 
	the set $U \in V(\G)$ such that $\b(U) =\l$ 
	equals
	$
	\prod_{i=1}^k\binom{n_i}{\ell_i}. 
	$
	Then, using \eqref{eq:indMatrix1} {that  relates $\Pr(U \in \I_{\nu}(\G))$
	and $e^{- \frac{\b(U)^T Q\, \b(U)}{2}}=e^{- \frac{\l^T Q\, \l}{2}}$},  we get that 
	\begin{align*}
	\E |\I_{\nu}(\G)| 
	&=  \Pr(U \in \I_{\nu}(\G))   \prod_{i=1}^k\binom{n_i}{\ell_i} 
	= e^{- \frac{\l^T Q \l}{2}}   \prod_{i=1}^k\binom{n_i}{\ell_i}     (1-p_{ii})^{-\ell_i/2}    \\
	&\geq   e^{- \frac{\l^T Q \l}{2}}   \prod_{i=1}^k\left(\frac{n_i}{\ell_i}\right)^{\ell_i}    
	\geq  e^{- \frac{\l^T Q \l}{2}} \nu^{-\|\l\|} .
	\end{align*}
	Using  {the scaling property in Theorem} \ref{l:Qmatrix}(a) and {the definition  \eqref{def:nu} of $\nu$},  we get 
	\begin{equation}\label{l-bound}
	\frac{\l^T \, Q\, \l}{2\|\l\|}
	= \nu \frac{(\nu^{-1}\l)^T  \, Q\, (\nu^{-1}\l)}{2\|\nu^{-1}\l\|} 
	\leq  \frac{\nu \, w(\n)}{2}  =
	\log   \left( \frac{w(\n) n_*}{\log^2 (w(\n)) \|\n\|}\right) - r(\n). 
	\end{equation}
	From \eqref{eq:s+l}, we also get that 
	\[
	\|\l\| = (2 + o(1))  \frac{\|\n\| \log(w(\n))}{ w(\n)}.
	\]
	Using \eqref{def_r}, {\eqref{eq:nuh}, \eqref{l-bound}},   the obvious inequality $n_* \leq \|\n\|$, and $w(\n) \geq  \|\n\|^{1-\sigma}$, we get that 
	\begin{equation}\label{1-moment}
	\begin{aligned}
	\E |\I_{\nu}(\G)|  &\geq  e^{- \frac{\l^T Q \l}{2}} \nu^{-\|\l\|}  
	\geq 
	\left(\left(\dfrac12 + o(1)\right) \log (w(\n)) \frac{   \|\n\|}{  {n_*}}  e^{r(\n)})\right)^{\|\l\|}
	\\
	&\gg e^{\|\l\|r(\n)}  =  \exp \left(  \omega\left(  \frac{\|\l\| w(\n)}{n_*}\right) \right)
	= \|\n\|^{\omega(1)}.
	\end{aligned}
	\end{equation}

	Next, let 
	\[ 
	\Delta := \sum_{\substack{|U \cap W| \geq 2}}
	\Pr \left(U \in \I_{ \nu}(\G) \text{ and } W \in \I_{ \nu}(\G)\right),
	\] 
	where the sum is over 
	all possible ordered pairs $(U, W)$ of subsets of $V(\G)$
	such that $|U \cap W| \geq 2$.  
	Note that if $|U \cap W| \leq  1$
	then the events $\{U \in \I_{\nu }(\G)\}$ and  
	$\{W \in \I_{\nu}(\G)\}$ are independent.
	By Janson's inequality, see \cite[Theorem 1]{Janson1990}, we have 
	\begin{equation}\label{Janson}
	\Pr \left(\I_{\nu}(\G) = \emptyset \right)
	\leq \exp \left(- \frac{(\E |\I_{\nu}(\G)|)^2}{ 2 \E |\I_{\nu}(\G)| + 2\Delta}\right).
	\end{equation}
	We have already established a lower bound for $\E |\I_{\nu}(\G)|$ in \eqref{1-moment}.
	Thus, it remains to bound  $\frac{\Delta }{(\E |\I_{\nu}(\G)|)^2}$ from the above.
	Using \eqref{eq:indMatrix1}, we find that
	\begin{align*}
	\frac{\Delta }{(\E |\I_{\nu}(\G)|)^2}=  \sum_{\m} 
	e^{\frac{\m^T   Q  \m}{2}} 
	\prod_{i=1}^k  \frac{\binom{\ell_i}{m_i}\binom{n_i-\ell_i}{\ell_i-m_i}}
	{\binom{n_i}{\ell_i}}   (1-p_{ii})^{m_i/2},
	\end{align*}
	where the sums are over $\m =(m_1,\ldots,m_k)^T \in    \Naturals^k$
	with  $\|\m\|\geq 2$ and $\m \preceq \l$.
	Observe that
	\begin{align*}
	\frac{\binom{\ell_i}{m_i}\binom{n_i-\ell_i}{\ell_i-m_i}}
	{\binom{n_i}{\ell_i}} 
	&= \frac{((\ell_i)_{m_i})^2 (n_i-\ell_i)_{\ell_i-m_i}}{m_i! (n_i)_{\ell_i}}	 
	\leq \frac{((\ell_i)_{m_i})^2}{ m_i! (n_i)_{m_i}} 
	\leq  \frac{1}{m_i!} \left(\frac{\ell_i^2}{n_i}\right)^{m_i}
	\\ &=  \frac{\left((1+o(1)) \nu^2 n_i \right)^{m_i} }{m_i!}  
	\leq  \frac{ 1}{m_i!}  \left(5   n_i\left(\frac{\log(w(\n))}{w(\n)}\right)^2 \right)^{m_i}.
	\end{align*}
	Denote
	\[
	\theta_m := \max_{\|\m\| = m} \frac{\m^TQ\m}{2\|\m\|},
	\]
	where the maximum is over $\m \in  \Naturals^k$ with $\|\m\|=m$ and $\m \preceq \l$.
	Then, we obtain
	\begin{equation}\label{Big_sum}
	\begin{aligned}
	\frac{\Delta }{(\E |\I_{\nu}(\G)|)^2} &\leq \sum_{m=2}^{\|\l\|}
	\left(5 \left(\frac{\log(w(\n))}{w(\n)}\right)^2 e^{\theta_m}\right)^{m_i} \prod_{i=1}^k \frac{n_i^{m_i}}{m_i!}
	\\ 
	&=\sum_{m=2}^{\|\l\|}
	\frac{1}{m!} \left(5 \|\n\| \left(\frac{\log(w(\n))}{w(\n)}\right)^2 e^{\theta_m}\right)^{m}.
	\end{aligned}             
	\end{equation}
	
	There are two ways we can estimate the quantity $\theta_m$. First, repeating the arguments of  \eqref{l-bound}
	with $\l$ replaced by any $\m \preceq \l$, we find that
	\begin{equation}\label{m-bound1}
	\theta_m
	\leq  \frac{\nu \, w(\n)}{2} =  
	\log   \left( \frac{w(\n) n_*}{\log^2 (w(\n)) \|\n\|}\right) -  r(\n).
	\end{equation} 
	Second,   observing 
	$ \frac{n_*}{\|\m\|} \m   \preceq \n$ and
	using {the monotonicity property in Theorem  \ref{l:Qmatrix}(a)}, we get
	\[
	\frac{\m^T \, Q\, \m}{2\|\m\|}  \leq   \frac{\|\m\| \,  w\left(    \frac{n_*}{\|\m\|} \m  \right)}{2 \ n_*  }
	\leq  \frac{\|\m\| w(\n)}{2 \, n_*   }. 
	\]
	Thus, we get 
	\begin{equation}\label{m-bound2}
	\theta_m \leq \frac{m\, w(\n)}{2 n_* }, 
	\end{equation} 
	which is better than \eqref{m-bound1} for small $m$.

	Using \eqref{def_r},  we can find  $m_0 \in \Naturals$  such that  
	\begin{equation}\label{m0-inequalities}
	1\ll \frac{\log \|\n\|}{r(\n)}
	+ \frac{n_*}{w(\n)} 
	\ll m_0 \ll \frac{ n_* \log \|\n\|    }{w(\n)}.
	\end{equation}
	Using the inequality $m! \geq m^m e^{-m}$ and the bound 
	{$e^{\theta_m} \leq  \dfrac{w(\n) n_*}{\log^2 (w(\n)) \|\n\|}  e^{-r(\n)}$} implied by \eqref{m-bound1}, we find that
	\begin{equation}\label{Big_sum1}
	\begin{aligned}
	\sum_{m  = m_0 }^{\|\l\|}  \frac{1}{m!} 
	\left(5 \|\n\| \left(\frac{\log(w(\n))}{w(\n)}\right)^2 e^{\theta_m}\right)^{m}
	&\leq
	\sum_{m=m_0 }^{\|\l\|} \left( \frac{5 e^{1-r(\n)} n_*}{m \, w(\n) }  \right)^{m} 
	\\
	\ll \sum_{m=m_0 }^{\|\l\|} e^{-  m r(\n)} 
	&\leq \|\l\| e^{-\omega(\log  \|\n\|)}  =  \|\n\|^{-\omega(1)},
	\end{aligned}
\end{equation}
{where the last two inequalities  used the lower bounds of \eqref{m0-inequalities}: first $m\geq m_0 \gg  \frac{n_*}{w(\n)}$ and then  $m \geq m_0 \gg \frac{\log \|\n\|}{r(\n)}$.}  
	we have  $\theta_m  \ll \log \|\n\|$ by  \eqref{m-bound2}.
	Recalling our assumptions 
	that $w(\n)\geq \|\n\|^{ 1-\sigma }$ and {$\|\n\|^{1+o(1)} =n_* \gg \dfrac{w(\n)}{\log \|\n\|}$},  we find that the following sum is dominated by the first term:
	\begin{equation}\label{Big_sum2}
	\sum_{m=2}^{m_0-1} \frac{1}{m!}\left(\frac{5 (\log (w(\n)))^2  \|\n\|e^{\theta_m}}{(w(\n))^2}\right)^m
	= 
	\left(\dfrac12+o(1)\right)
	\left(\frac{\|\n\|  e^{o(\log \|\n\|)}}{(w(\n))^2} \right)^2
	\leq \|\n\|^{ 4\sigma -2   +o(1)}.
	\end{equation}
  {Putting \eqref{Big_sum1} and \eqref{Big_sum2} in  \eqref{Big_sum},
	we obtain that
	\begin{align*}
	\frac{\Delta}{(\E |\I_{\nu}(\G)|)^2}   
 \leq
	\|\n\|^{2 -4\sigma  +o(1)} + \|\n\|^{-\omega(1)} =
	\|\n\|^{2-4\sigma    +o(1)}. 
	\end{align*}
Recalling  from \eqref{1-moment} that $\E |\I_{\nu}(\G)| = \|\n\|^{\omega(1)}$, we conclude that 
	\begin{align*}
	\frac{\Delta +\E |\I_{\nu}(\G)|}{(\E |\I_{\nu}(\G)|)^2}   
	 \leq
	\|\n\|^{2-4\sigma    +o(1)}. 
\end{align*}
	Applying \eqref{Janson}, we complete the proof. }
\end{proof}

\subsection{Concentration of the weighted independence  number}

The estimates of  Sections \ref{S:lower} and  \ref{S:existence_he} lead  to the following result.

\begin{thm}\label{T:independent}
Let $\G \sim \SBM$, where $P = (p_{ij})_{i,j\in [k]}$ is such that $p_{ij} = p_{ji}$ and $0 \leq p_{ij}<1$ for all $i,j \in [k]$.  Let $Q= Q(P)$ be as in \eqref{Q_def}. Let $\sigma \in [0, \sigma_0]$  for some fixed  $0<\sigma_0 < \frac12$.  
	Assume that   \eqref{ass_sbm}, \eqref{ass_1}  hold
 and 
 \[
 	 n_* = \|\n\|^{1+o(1)}, \qquad  n_* \gg \frac{w(\n)}{\log \|\n\|},
 \]
 where  $n_*:=\min_{i\in[k]}n_i$.
		Then,   whp 
	\[
	\alpha_{h}(\G)  = (1-\sigma + o(1))  \log \|\n\|.
	\]
\end{thm}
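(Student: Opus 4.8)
The plan is to prove the matching upper and lower bounds on $\alpha_h(\G)$ separately: the upper one by a first–moment computation as in the proof of Theorem~\ref{T:lower}, the lower one by producing a ``heavy'' independent set via Lemma~\ref{L:janson}. Both directions rest on the preliminary observation that, under \eqref{ass_sbm} and \eqref{ass_1},
\[
	w(\n)=\|\n\|^{\,1-\sigma+o(1)},\qquad\text{so that}\qquad\log w(\n)=(1-\sigma+o(1))\log\|\n\|.
\]
Here the upper estimate $w(\n)\le(\max_{i,j}q_{ij})\|\n\|$ combines with $q^*\le\max_{i,j}q_{ij}\ll q^*\log\|\n\|$ (from the $\max_{i,j}q_{ij}/q^*$ term in \eqref{ass_1}) and $q^*=\|\n\|^{-\sigma+o(1)}$ to give $\max_{i,j}q_{ij}=\|\n\|^{-\sigma+o(1)}$; the lower estimate follows from $w(\n)\ge w_*(\n)\ge\frac{(\hat q(\n))^2}{kq^*}\|\n\|$ by Theorem~\ref{l:Qmatrix}(d), together with $\hat q(\n)=q^*=\|\n\|^{-\sigma+o(1)}$ and $k=\|\n\|^{o(1)}$. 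Note also that $q^*\le\max_{i,j}q_{ij}\ll\log\|\n\|$ by \eqref{ass_1}.

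For the upper bound I would reuse the opening of the proof of Theorem~\ref{T:lower}. Taking $t:=\log(q^*\|\n\|)$, any $U$ with $\Pr(U\in\I(\G))\le e^{-t|U|}$ satisfies $t|U|\le\tfrac12|U|^2\max_{i,j}q_{ij}$ by \eqref{eq:indMatrix1}, so the quantity $s_t$ of Lemma~\ref{l:upper_gen} obeys $s_t\ge 2t/\max_{i,j}q_{ij}\to\infty$ and $s_te^t\ge 2q^*\|\n\|\log(q^*\|\n\|)/\max_{i,j}q_{ij}\gg\|\n\|$, both from \eqref{ass_1}. Lemma~\ref{l:upper_gen} then yields $\Pr(\alpha_h(\G)\ge\log(q^*\|\n\|))\le 2^{1-s_t}\to0$, and since $\log(q^*\|\n\|)=(1-\sigma+o(1))\log\|\n\|$ by \eqref{ass_sbm}, we obtain $\alpha_h(\G)\le(1-\sigma+o(1))\log\|\n\|$ whp.

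The lower bound is the substantive part. One must \emph{not} apply Lemma~\ref{L:janson} directly to $\G$: a set $U\in\I_\nu(\G)$ has weight $h(U)\approx\tfrac12\nu\,(\n^TQ\n/\|\n\|)$, and although $\n^TQ\n/\|\n\|$ and $w(\n)$ are both $\|\n\|^{1-\sigma+o(1)}$, their ratio can be as large as $\|\n\|/\|\zvec\|=\|\n\|^{o(1)}$ (where $\zvec$ is the corner maximiser), which may dwarf the relevant scale $\log\|\n\|$ and leave $h(U)$ far below target. Instead I would take a corner maximiser $\zvec$ of $w(\n,Q)$ from Theorem~\ref{l:Qmatrix}(b), with $z_i\in\{0,n_i\}$ and $\zvec^TQ\zvec/\|\zvec\|=w(\n)>0$, put $W:=\{i:z_i=n_i\}$, and pass to the induced subgraph $\G':=\G[\bigcup_{i\in W}B_i]$, itself a stochastic block model with block sizes $\n':=(n_i)_{i\in W}$ and matrix $Q':=(q_{ij})_{i,j\in W}$; by the choice of $\zvec$, one has $w(\n',Q')=w(\n)$ and this value is attained at the \emph{full} vector $\n'$, i.e.\ $(\n')^TQ'\n'/\|\n'\|=w(\n)$. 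Fixing a constant $\sigma'\in(\sigma_0,\tfrac12)$, all hypotheses of Lemma~\ref{L:janson} transfer to $\G'$ with parameter $\sigma'$ (using $n_*\le\min_{i\in W}n_i\le\|\n'\|\le\|\n\|$, $n_*=\|\n\|^{1+o(1)}$, \eqref{eq:n*bound}, and $w(\n',Q')=w(\n)=\|\n\|^{1-\sigma+o(1)}\ge\|\n'\|^{1-\sigma'}$ eventually, since $\sigma\le\sigma_0<\sigma'$). Thus, with failure probability $\exp(-\|\n'\|^{2-4\sigma'+o(1)})\to0$ (as $\sigma'<\tfrac12$), there is $U\in\I_\nu(\G')$ with $\b(U)=\lfloor\nu\n'\rfloor$ and $\nu=(2+o(1))\log w(\n)/w(\n)$.

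It remains to compute $h(U)$, viewed inside $\G$ (so $\Pr(U\in\I(\G))=\Pr(U\in\I(\G'))$ and $|U|$ is unchanged). From $\nu n_*\gg1$ (which follows from $n_*\gg w(\n)/\log\|\n\|$) we get $\lfloor\nu n_i\rfloor=(1+o(1))\nu n_i$ uniformly over $i\in W$; feeding this into \eqref{eq:indMatrix1} applied to $\G'$, with the diagonal correction $\sum\nolimits_{i\in W}q_{ii}b_i(U)\le q^*|U|$ negligible against $\b(U)^TQ'\b(U)=(1+o(1))\nu^2 w(\n)\|\n'\|$ (because $\nu w(\n)=\Theta(\log\|\n\|)\gg q^*$), one gets $-\log\Pr(U\in\I(\G))=(1+o(1))\tfrac12\nu^2 w(\n)\|\n'\|$ and $|U|=(1+o(1))\nu\|\n'\|$, whence
\[
	\alpha_h(\G)\ \ge\ h(U)\ =\ (1+o(1))\tfrac12\nu w(\n)\ =\ (1+o(1))\log w(\n)\ =\ (1-\sigma+o(1))\log\|\n\|
\]
whp, using $\log w(\n)=(1-\sigma+o(1))\log\|\n\|$ and that $1-\sigma$ is bounded away from $0$ and $\infty$. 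Together with the upper bound this gives $\alpha_h(\G)=(1-\sigma+o(1))\log\|\n\|$ whp. I expect the main obstacle to be the subgraph reduction and the attendant bookkeeping of the ubiquitous $\|\n\|^{o(1)}$ factors — in particular ensuring that $w(\n)$, $\n^TQ\n/\|\n\|$ and $n_*\hat q(\n)$ all agree up to such factors, and choosing $\sigma'$ so that Lemma~\ref{L:janson} still applies on $\G'$.
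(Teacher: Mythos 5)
Your proof is correct, and the upper-bound half is identical to the paper's, but the lower-bound half takes a genuinely different route. You worry that applying Lemma~\ref{L:janson} to the full graph $\G$ and computing $h(U)$ directly would give roughly $\tfrac12\nu\,\n^TQ\n/\|\n\|$, which can lag behind the target by a factor like $\log\|\n\|$ (e.g.\ if one block of size $\|\n\|/\log\|\n\|$ carries all the weight and the rest is empty). That worry is legitimate. You resolve it by first passing to the induced subgraph $\G'$ on the corner-maximiser blocks $W$, so that $(\n')^TQ'\n'/\|\n'\|=w(\n)$ exactly, then applying Janson inside $\G'$ with an auxiliary exponent $\sigma'\in(\sigma_0,\tfrac12)$ and estimating $h(U)$ directly.

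The paper instead applies Janson to the full $\G$ (as your ``naive'' step does) but then never computes $h(U)$ at all: it bounds $w(\b(U))\geq(\nu-1/n_*)\,w(\n)$ via Theorem~\ref{l:Qmatrix}(a), and invokes the already-established inequality \eqref{eq:phi-w}, namely $w(\b(U))\leq 2\alpha_h(\G)+q^*$. That inequality is itself the maximum over subsets $W\subseteq U$, so it \emph{implicitly} selects the heavy subset for you — the very restriction to corner blocks that you perform by hand before the Janson step. Thus the two proofs exploit the same structural fact (that the heavy set lives inside a corner) but package it differently: the paper via the subset-maximum hiding in \eqref{eq:phi-w}, you via an explicit subgraph $\G'$. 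Your version is a bit longer (you must re-verify the hypotheses of Lemma~\ref{L:janson} for $\G'$ and introduce $\sigma'$), but it has the advantage of not relying on \eqref{eq:phi-w} and of being slightly more scrupulous about the assumption $w(\n)\geq\|\n\|^{1-\sigma}$ required by Lemma~\ref{L:janson}, which the paper glosses over (your choice of $\sigma'>\sigma_0$ repairs this cleanly, whereas the paper implicitly assumes one can apply the lemma with the nominal $\sigma$). Both approaches are valid.
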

\begin{proof}
 All the assumptions of Theorem \ref{T:lower} also present in this theorem,  so we can use the formulas and arguments given in its proof.
    Using  \eqref{eq:alpha} and {the assumption $ q^* =    \|\n\|^{-\sigma+o(1)}$ by \eqref{ass_sbm}}, we find that whp 
    \[
    	\alpha_{h}(\G) \leq  \log (q^*\|\n\|) = (1-\sigma+o(1)) \log \|\n\|.
    	    \]
  Next, using {Theorem  \ref{l:Qmatrix}(d) and the assumptions $ \hat{q}(\n), q^* =    \|\n\|^{-\sigma+o(1)}$ by \eqref{ass_sbm}}, we have that 
  \[
  	w(\n) \geq w_*(\n) \geq \frac{(\hat{q}(\n))^2}{kq^*} \|\n\|  = \|\n\|^{1-\sigma+o(1)}. 
  \]
  Thus, all assumptions of Lemma \ref{L:janson} hold.  
   Applying Lemma \ref{L:janson}, we find that  whp $\I_\nu(\G)   \neq\emptyset$ for some $\nu = (2+o(1)) \frac{\log (w(\n))}{w(\n)}$.
   If $U \in \I_\nu(\G) $ then for all $i \in [k]$ 
   \[
   	b_i(U) = \lfloor \nu n_i\rfloor \geq  \left(\nu -  \dfrac{1}{n_*}\right) n_i.
   \]
  Combining {the above, the monotonicity property in Theorem}  \ref{l:Qmatrix}(a), and  \eqref{ass_sbm}, we get that 
   \[
   	w({\b(U)}) \geq (\nu -  \dfrac{1}{n_*}) w(\n) = (2+o(1)) \log (w(\n)) \geq (2-2\sigma +o(1)) \log \|\n\|.
   	 \]
 Thus,  using \eqref{eq:phi-w} and {the arguments below 
 \eqref{eq:phi-w} showing that $q^*\ll \log \|\n\|$,} we find that, whp 
 \[
       \alpha_h(\G)  \geq  \tfrac12 \left(w({\b(U)})  -  q^*\right) = (1-\sigma  + o(1))\log \|\n\|.
 \] 
    	 This completes the proof.
\end{proof}

 We note that the proof of our main result, Theorem \ref{Thm_sbm},  does not rely on Theorem \ref{T:independent}, but the study of  the distribution of the parameter $\alpha_h(\G)$ is of independent interest. 
 Observe that definition \eqref{def:alpha_w}   extends to any random graph model.  We believe that Theorem~\ref{T:independent} carries over as well. In particular, we conjecture the following.
 \begin{conj}\label{conj:independent}
   Let $\G = \G(n)$ be a random graph on vertex set $[n]$ where edges $ij$ appear independently of each other with probabilities $p_{ij} = p_{ij}(n) \in (0,1)$.   Assume that there exist
   $q = q(n)$  and constants $c_1,c_2>0$ such that
    \[
    	 q n \rightarrow \infty, \qquad q \ll  \log n,
    \]
     as $n \rightarrow \infty$, and,     for all edges $ij$, 
   \[
   	e^{-c_1 q} \leq 1- p_{ij} \leq e^{-c_2 q}.
   \]
   Then, whp
   \[
   	\alpha_h(\G) = (1+o(1)) \log(qn).
   \]
 \end{conj}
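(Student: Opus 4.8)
I would prove the two matching bounds separately, the upper one being routine and the lower one being the real content.

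\textbf{Upper bound.} For $\alpha_h(\G)\le(1+o(1))\log(qn)$ whp I would invoke Lemma~\ref{l:upper_gen}, which applies since the edges of $\G$ appear independently. Fix $\eps>0$ and set $t:=(1+\eps)\log(qn)$. If $\emptyset\neq U\subseteq V(\G)$ has $\Pr(U\in\I(\G))\le e^{-t|U|}$, then
$t|U|\le-\log\Pr(U\in\I(\G))=\sum_{ij\in\binom{U}{2}}q_{ij}\le\binom{|U|}{2}c_1q$, whence $|U|\ge 2t/(c_1q)$ and so $s_t\ge 2(1+\eps)\log(qn)/(c_1q)$. Because $qn\to\infty$ and $q\ll\log n$ force $\log(qn)\to\infty$ and $\log(qn)/q\to\infty$, we get $s_t\to\infty$ and, using $e^{t}=(qn)^{1+\eps}$, $s_te^{t}\ge\tfrac{2(1+\eps)}{c_1}\log(qn)(qn)^{\eps}\,n\ge 6n$ for large $n$. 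Lemma~\ref{l:upper_gen} then gives $\Pr(\alpha_h(\G)\ge t)\le 2^{1-s_t}\to 0$, and letting $\eps\downarrow 0$ finishes this direction.

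\textbf{Lower bound.} Here the aim is to produce, whp, an independent set $U$ with $\sum_{ij\in\binom{U}{2}}q_{ij}\ge(1-o(1))|U|\log(qn)$. The guiding heuristic is that for an (almost) homogeneous piece of $\G$ of density parameter $\rho\in[c_2q,c_1q]$ on $m$ vertices, the first-moment threshold predicts heavy independent sets of size $\approx 2\log(\rho m)/\rho$ and hence of per-vertex $q$-weight $\approx\log(\rho m)$, which is asymptotically $\log(qn)$ provided $m$ is close enough to $n$, the key point being that $\log(\rho m)$ depends on $\rho$ only through lower-order terms over the admissible range. Concretely I would: (i) pass to a large subset $V_0$ on which the weighted structure is nearly homogeneous, e.g.\ by discretising $q_{ij}/q\in[c_2,c_1]$ into finitely many levels and an averaging/pigeonhole argument yielding $V_0$ with $\log|V_0|=\log n-o(\log(qn))$ whose induced $q$-weights all lie within a $(1+o(1))$-factor of their mean $\rho$; (ii) run a second-moment (Janson) argument inside $V_0$ exactly in the spirit of the proof of Lemma~\ref{L:janson}, choosing the set size so that the independent sets found have per-vertex weight $(1-o(1))\log(qn)$, with the correction term $\Delta$ controlled using $q_{ij}\le c_1q$ for the over-counted pairs and $q\ll\log n$ to make $\Delta/(\E|\I|)^2=o(1)$; (iii) combine with the upper bound already established.

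\textbf{Main obstacle.} I expect steps (i)–(ii) to be where the difficulty concentrates, and this is essentially why the statement is only a conjecture. The inhomogeneity of $\G$ lives on \emph{pairs} rather than vertices, so — unlike the stochastic block model — it need not reduce to a block model and there is no clean optimisation problem playing the role of $w_*(\n)$; in particular a naive second moment applied to independent sets of a fixed size tends to output sets concentrated in the locally sparsest regions, whose per-vertex weight can be strictly below $\log(qn)$, so both $V_0$ and the size parameter must be chosen adaptively to the local density profile, and securing $\log|V_0|=\log n-o(\log(qn))$ is itself delicate when $qn$ grows slowly. An alternative route is to sandwich $\G$ between $\G(n,1-e^{-c_2q})$ and $\G(n,1-e^{-c_1q})$ and use the concentration of the independence number of $\G(n,p)$ (cf.\ \eqref{chi-relation} and Theorem~\ref{T:oneblock}); this immediately gives $\alpha_h(\G)\ge(1-o(1))\tfrac{c_2}{c_1}\log(qn)$, and the remaining task is to bootstrap away the constant $c_2/c_1$ — which again requires extracting a heavy, not merely a large, independent set and is precisely the open part.
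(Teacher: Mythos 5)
This statement is Conjecture~\ref{conj:independent}, and the paper does \emph{not} prove it. Immediately after the conjecture the authors remark only that the upper bound $\alpha_h(\G)\le \log(qn)$ whp follows from Lemma~\ref{l:upper_gen} as in \eqref{eq:alpha}, and that proving the lower bound ``would require significant modifications of the arguments given in Section~\ref{S:existence_he}.'' Your proposal is consistent with this: your upper bound argument is precisely the application of Lemma~\ref{l:upper_gen} the authors have in mind (set $t=(1+\eps)\log(qn)$, bound $s_t\ge 2t/(c_1q)$, check $s_te^t\ge 6n$; the computation is correct), and you are honest that the lower bound is where the real difficulty lies and that you have only a plan, not a proof. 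Your diagnosis of the obstacle is also accurate and slightly more detailed than the paper's one sentence: the inhomogeneity of $\{p_{ij}\}$ is genuinely pair-level, it need not factor through a block structure, a naive second moment at a fixed set size is pulled towards the sparsest region and produces large but not $q$-heavy independent sets, and the sandwich $\G(n,1-e^{-c_2q})\subseteq\G\subseteq\G(n,1-e^{-c_1q})$ loses a multiplicative constant $c_2/c_1$. The only thing worth flagging is that your step~(i)--(ii) lower-bound outline (regularise to a near-homogeneous $V_0$ with $\log|V_0|=\log n - o(\log(qn))$, then run Janson) is presented as a programme, not a verified argument; in particular it is not clear that such a $V_0$ exists in general when $qn$ grows slowly, and you correctly caveat this. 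In short: you cannot be faulted for failing to prove what the authors themselves left open; you proved exactly the part they say can be proved, by essentially their intended route.
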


  In fact, 
  proceeding from 
  Lemma \ref{l:upper_gen} similarly to \eqref{eq:alpha}, 
  one can derive that $\alpha_h(\G) \leq \log (qn)$ whp
  under the assumptions of Conjecture \ref{conj:independent}. 
  However, proving the counterpart would require significant modifications of the arguments given in Section \ref{S:existence_he}.


\section{Crude upper bound}\label{S:upperbound}
 
In this section, we establish a crude upper bound on $\chi(\G)$, where $\G \sim \M(\n,P)$
based   on a simple idea of colouring each block separately.  To do so, we only need   the results for the classical binomial random graph $\G(n,p)$. 

\begin{lemma}\label{L:crude_upper}
	Let $\sigma \in [0,\sigma_0]$  for some fixed $0<\sigma_0<\frac14$ and    $p=p(n)\in (0,1)$ is such that  
	\[
	\log n \gg
	q := \log  \dfrac{1}{1-p} \geq n^{{-\sigma}}.
	\]   
	Then, for any $\eps>0$ and any $s = n^{1+o(1)}$, 
	{with probability at least
	$
	1 - \exp\left(- n^{2-4\sigma + o(1)}\right)
	$,
	there is a colouring of $\G(n,p)$ with at least } 
	$
	n- s 
	$
	vertices using at most 
	$ \left(1 + \eps\right) \frac{ qn  }{2\log \left(qn \right)}$ colours.
\end{lemma}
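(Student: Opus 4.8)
The plan is to combine the existence of large independent sets in \emph{every} sufficiently large induced subgraph of $\G(n,p)$ with a greedy colouring. Fix $\eps>0$ and $\delta=\delta(\eps)\in(0,1)$ with $\tfrac{1}{1-\delta}\le1+\eps$, and set the target size
\[
m=m(n):=\left\lceil(1-\delta)\,\frac{2\log(qn)}{q}\right\rceil .
\]
Since $n^{-\sigma}\le q\ll\log n$, one has $qn\ge n^{1-\sigma_0}\to\infty$, $\log(qn)=(1+o(1))\log n$, $m=\Theta(\log(qn)/q)\to\infty$, and $m=O(n^{\sigma_0}\log n)=o(s)$ because $\sigma_0<\tfrac{1}{4}$ and $s=n^{1+o(1)}$. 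I would show that the event
\[
\mathcal E:=\bigl\{\,\alpha(\G[W])\ge m\ \text{ for every }W\subseteq V(\G)\text{ with }s\le|W|\le n\,\bigr\}
\]
has probability at least $1-\exp(-n^{2-4\sigma+o(1)})$. Granting this, one colours $\G$ greedily: put $V_0=V(\G)$; while $|V_i|\ge s$, use $\mathcal E$ (valid since $|V_i|\in[s,n]$ and $m\le s\le|V_i|$ for $n$ large) to select an independent set $I_{i+1}\subseteq V_i$ of size exactly $m$, assign it a fresh colour, and set $V_{i+1}=V_i\setminus I_{i+1}$. When this halts, after $\ell$ steps, $|V_\ell|<s$, so at least $n-s$ vertices are coloured, and, using $m\ge(1-\delta)\tfrac{2\log(qn)}{q}$,
\[
\ell=\frac{n-|V_\ell|}{m}\le\frac{n}{m}\le\frac{qn}{2(1-\delta)\log(qn)}\le(1+\eps)\,\frac{qn}{2\log(qn)} .
\]

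To estimate $\Pr(\mathcal E^c)$, fix $t\in\{s,\dots,n\}$ and $W$ with $|W|=t$, and view $\G[W]\sim\G(t,p)$ as an instance of $\calG(\n',P')$ with $k=1$, $\n'=(t)$, $P'=(p)$, so that $Q'=(q)$ and (a direct computation, cf.\ Theorem~\ref{l:Qmatrix}(b)) $w(\n',Q')=qt$. Putting $\sigma_1=\sigma_1(t):=\max\{0,-(\log q)/\log t\}$ one has $w(\n',Q')=qt\ge t^{1-\sigma_1}=\|\n'\|^{1-\sigma_1}$, and the remaining hypotheses of Lemma~\ref{L:janson} are immediate: $\|\n'\|=t\to\infty$, $n_*=t=\|\n'\|^{1+o(1)}$, and $n_*\gg w(\n',Q')/\log\|\n'\|$ because that is just $\log t\gg q$, which holds as $\log t\ge\log s=(1+o(1))\log n\gg q$. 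Using $q\ge n^{-\sigma}$ and $t\ge s=n^{1+o(1)}$ one checks that $\sigma_1(t)=\sigma+o(1)$ \emph{uniformly} over $t\in[s,n]$, in particular bounded away from $\tfrac{1}{2}$. Lemma~\ref{L:janson} then supplies a $\nu=\nu(t)=(2+o(1))\log(qt)/(qt)$ with
\[
\Pr\bigl(\I_\nu(\G[W])=\emptyset\bigr)\le\exp\bigl(-t^{2-4\sigma_1(t)+o(1)}\bigr)\le\exp\bigl(-t^{2-4\sigma+o(1)}\bigr),
\]
and whenever $\I_\nu(\G[W])\neq\emptyset$ the graph $\G[W]$ contains an independent set of size $\lfloor\nu t\rfloor\ge(2-o(1))\tfrac{\log(qt)}{q}\ge(2-o(1))\tfrac{\log(qn)}{q}\ge m$ for $n$ large; hence $\{\alpha(\G[W])<m\}\subseteq\{\I_\nu(\G[W])=\emptyset\}$. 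A union bound over the $\le\binom{n}{t}\le 2^n$ choices of $W$ and the $\le n$ values of $t$ gives
\[
\Pr(\mathcal E^c)\le\sum_{t=s}^{n}\binom{n}{t}\max_{|W|=t}\Pr\bigl(\I_{\nu(t)}(\G[W])=\emptyset\bigr)\le n\,2^n\exp\bigl(-n^{2-4\sigma+o(1)}\bigr)=\exp\bigl(-n^{2-4\sigma+o(1)}\bigr),
\]
where we used $t\ge s=n^{1+o(1)}$ together with $2-4\sigma>1$ (since $\sigma\le\sigma_0<\tfrac{1}{4}$), so that $n^{2-4\sigma+o(1)}\gg n\ge\log(n\,2^n)$.

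The heart of the argument --- and its main obstacle --- is exactly this passage from one fixed $W$ to all $\binom{n}{t}$ of them at once: a single application of Janson's inequality, Lemma~\ref{L:janson}, needs only $\sigma<\tfrac{1}{2}$, but to overcome the $2^n$ factor in the union bound one needs the failure probability $\exp(-t^{2-4\sigma+o(1)})$ to dominate $e^{n}$, which forces $2-4\sigma>1$, i.e.\ $\sigma<\tfrac{1}{4}$ --- this is precisely where the standing hypothesis $\sigma_0<\tfrac{1}{4}$ is used (and why this crude bound, and hence Theorem~\ref{T:upper}, is stated only for $\sigma_0<\tfrac14$). The remaining work is bookkeeping: one must check that the various $o(1)$-terms produced by Lemma~\ref{L:janson} --- notably $\sigma_1(t)=\sigma+o(1)$ and $\log(qt)=(1+o(1))\log(qn)$ --- are uniform over the whole range $s\le t\le n$, which follows readily from $s=n^{1+o(1)}$ and $n^{-\sigma}\le q\ll\log n$.
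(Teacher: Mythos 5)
Your proposal is correct and follows essentially the same route as the paper's own proof: apply Lemma~\ref{L:janson} to every sufficiently large induced subgraph, use a union bound over all $\le 2^n$ choices of $W$ (which is exactly where $\sigma_0<\frac14$, i.e.\ $2-4\sigma>1$, is needed), and then greedily extract and colour independent sets of size $\approx(2-\eps)\log(qn)/q$ until fewer than $s$ vertices remain. The only cosmetic imprecision is your claim that $\sigma_1(t)=\sigma+o(1)$; since the hypothesis only gives $q\ge n^{-\sigma}$, what actually holds is $\sigma_1(t)\le\sigma+o(1)$ uniformly over $t\in[s,n]$, but as this is the inequality you actually use, the argument is unaffected.
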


\begin{proof} 
	This  argument is well known for a constant $p \in (0,1)$; see for example, \cite[Section 7.4]{FK2015}. For the sake of completeness, we repeat it here and  check that it extends  to $p=p(n)$
	satisfying the assumptions of Lemma \ref{L:crude_upper}.
	
	We will apply Lemma \ref{L:janson}  to subgraphs  $\G(n',p) $
	of $\G(n,p) $ with $n'\geq s$, by setting    $k=1$, 
	$\n = (n')$, and $P = (p)$. Then, by definition, we have  $n_* =\|\n\| = n'$ and $w(\n) =  qn'$ so
	all assumptions of Lemma \ref{L:janson} hold.  
	Using   Lemma \ref{L:janson}, 
	we  show that the probability 
	that there is a subgraph in $\G(n,p)$ with at least $s$ vertices 
	without an independent set of size $(2-\eps) \frac{\log (qn)}{q}$  is at most 
	\[
	2^n  \exp(- s^{2-4\sigma + o(1)}) = \exp\left(- n^{2-4\sigma + o(1)}\right).
	\]
	Thus, we can keep colouring such independent sets and {deleting} them from the graph until we are left with {fewer} than $s $ vertices. 
	The number of colours used in this  process is bounded above by 
	\[
	\frac{n}{(2-\eps) \frac{\log (qn)}{q}} \leq \left(\frac12 + \eps\right) \frac{ qn  }{\log \left(qn \right)}.
	\]
	Note that, in the above, we can assume that $\eps<1$   since the statement of the lemma becomes stronger. Then,   the inequality $ \dfrac{1}{2-\eps} \leq  \dfrac{1+\eps}{2}$  holds.
\end{proof}

To colour the remaining vertices, we use the following lemma. 

\begin{lemma}\label{L:density}
	Let $\G =\G(n,p)$, where   {$p=p(n)\in  (0,1)$} is such that $pn \rightarrow \infty$ as $n\rightarrow \infty$.  Then, for any positive integer $s \geq p^{-1}$, we have   
		\[
	\Pr \Big( \exists\ W \subseteq V(\G) \st |W| = s \text{ and  $\chi( \G[W]) \geq  p s \log  n +1$ } \Big)
	\leq \exp\left(-\omega(p^2s^2 \log^2 n)\right).
	\]
\end{lemma}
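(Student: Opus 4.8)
The plan is to control $\chi(\G[W])$ through the degeneracy bound~\eqref{chi:upper}: for any graph $H$ one has $\chi(H)\le 1+\max_{U}\delta_H(U)$, where $\delta_H(U)$ is the minimum degree of the induced graph $H[U]$ and the maximum runs over non-empty $U\subseteq V(H)$. Since the minimum degree of a graph never exceeds its average degree, $\delta_{\G}(U)\le 2e(\G[U])/|U|$, so everything reduces to bounding the number of edges spanned by induced subgraphs of $\G$ on at most $s$ vertices. I would first record the deterministic reduction: if every $U\subseteq V(\G)$ with $|U|\le s$ satisfies $e(\G[U])<\tfrac12\,ps\log n\cdot|U|$, then for every $W$ with $|W|=s$ and every $U\subseteq W$ we get $\delta_{\G}(U)<ps\log n$, hence by~\eqref{chi:upper} applied to $\G[W]$,
\[
\chi(\G[W])\ \le\ 1+\max_{U\subseteq W}\delta_{\G}(U)\ \le\ \lceil ps\log n\rceil\ <\ ps\log n+1 .
\]
Consequently the event in the lemma is contained in the event $\mathcal E$ that some $U\subseteq V(\G)$ with $|U|\le s$ has $e(\G[U])\ge\tfrac12\,ps\log n\cdot|U|$, and it suffices to bound $\Pr(\mathcal E)$.

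To estimate $\Pr(\mathcal E)$, write $m=|U|$ and split according to the size. If $m\le ps\log n$ then $e(\G[U])\le\binom m2<\tfrac12\,m\cdot ps\log n$ deterministically, so such sets never contribute (in particular, if $ps\log n\ge s$ then $\mathcal E=\emptyset$ and the lemma is trivial). For $ps\log n<m\le s$, the quantity $e(\G[U])$ has the law $\mathrm{Bin}\!\bigl(\binom m2,p\bigr)$ with mean $\mu\le\tfrac12 m^2p\le\tfrac12 msp$, while the threshold $T:=\tfrac12\,psm\log n$ satisfies $T/\mu\ge s\log n/m\ge\log n$ because $m\le s$; a Chernoff bound gives
\[
\Pr\bigl(e(\G[U])\ge T\bigr)\ \le\ \Bigl(\tfrac{e\mu}{T}\Bigr)^{T}\ \le\ \Bigl(\tfrac{e}{\log n}\Bigr)^{T}\ =\ \exp\bigl(T-T\log\log n\bigr).
\]
Taking a union bound over the $\binom nm\le e^{m\log n}$ sets $U$ of size $m$ and then over the at most $s$ admissible values of $m$, and invoking $s\ge p^{-1}$ (so $ps\ge 1$, whence $T\ge\tfrac12 m\log n$ and $T\ge\tfrac12 p^2 s^2\log^2 n$), the term $T\log\log n$ swamps $m\log n+T+\log s$, giving
\[
\Pr(\mathcal E)\ \le\ \exp\bigl(-\Omega(p^2 s^2\log^2 n\,\log\log n)\bigr)\ =\ \exp\bigl(-\omega(p^2 s^2\log^2 n)\bigr),
\]
which is the claimed bound.

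Once the degeneracy reduction is set up, the rest is a routine Chernoff-and-union-bound computation, and I do not anticipate a real obstacle. The one point needing care is to verify that the exponent genuinely beats $p^2 s^2\log^2 n$ by a factor tending to infinity \emph{uniformly} over the range $ps\log n<m\le s$: the binding case is the smallest admissible size $m\approx ps\log n$, where $T\approx\tfrac12 p^2 s^2\log^2 n$ and the ratio $T/\mu\ge\log n$ between mean and threshold supplies the extra $\log\log n$ factor --- this is precisely where the hypotheses $s\ge p^{-1}$ and $\log n\to\infty$ (hence $\log\log n\to\infty$) are used. A secondary, purely bookkeeping concern is the handling of strict versus non-strict inequalities and of the ceiling in the degeneracy step, which is the most error-prone part of the write-up.
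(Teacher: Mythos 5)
Your proof is correct and follows essentially the same route as the paper's: both reduce via the degeneracy bound \eqref{chi:upper} to showing that no $U$ with $|U|\le s$ spans too many edges (the paper phrases this through the minimum degree, you through the average degree, which are equivalent here), and both conclude with a Chernoff/binomial-tail estimate and a union bound, extracting the extra $\log\log n$ factor from the ratio $T/\mu\ge\log n$. The only cosmetic difference is that you union directly over subsets $U$ of size $\le s$ instead of first choosing $W$ of size $s$ and then $U\subseteq W$, which slightly streamlines the bookkeeping.
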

\begin{proof}
	First,  for any postive integer $u \leq s$,  
	we  estimate the probability of the event that  the minimal degree of $\G'=\G(u,p)$  
	is at least {$ps \log  n$}.   This event implies that $\G'$
	has at least $\frac12 psu \log  n$ edges. Let $N_u:= \binom{u}{2}$. 
	Since the distribution of the number of edges in $\G'$ is 
	$\operatorname{Bin}(N_u,p)$, we find that 
		\begin{align*}
	\Pr\left(\G' \text{ has at least $\dfrac12 psu \log  n$ edges} \right)&=
	\sum_{i \geq  \frac12 psu \log  n} \binom{N_u}{i} p^i (1-p)^{N_u-i}
	\\
	\leq  \sum_{i \geq \frac12 psu \log  n} \left(  \frac{e p N_u}{i}\right)^i
	&\leq 2 \left(  \frac{e }{ \log n}\right)^{ \frac{1}{2} psu\log n} =
	n^{- \omega(psu)}.
	\end{align*}
	To derive the last inequality in the above, we observe  that  
	\[ 
	\frac{  p N_u}{i} \leq \frac{p u^2 }{psu \log n} \leq \frac{1}{ \log n}.
	\]
	Note also that if $u \leq ps\log n$ then $\G'$ has less than   $\frac12 psu\log n$ edges with probability 1. 
	
	Using the union bound over all choices 
	for $W \subseteq V(\G)$ with $|W|=s$,    for	$u$ such that  $ps \log n< u \leq s$ ,	and for $U \subseteq W$ with $|U|=u$, we  get that
	\begin{align*}
	\Pr  \Big(  \exists  W \subseteq V(\G) &\st |W| = s   \text{ and }
	\max_{U \subseteq W} \delta_{\G} ({U}) \geq ps \log n\Big)
	\\ 
	&\leq
	2 \binom{n}{s} \sum_{u > ps\log n} \binom{s}{u}  n^{-\omega(psu)}
	\\ &\leq 2 \left(\frac{en}{s}\right)^s   \sum_{u > ps\log n} \left(\frac{es}{u}n^{-\omega(ps)}\right)^u
	=   \exp\left(-\omega(p^2s^2 \log^2 n)\right).
	\end{align*}
	{Combining this and  the upper bound  \eqref{chi:upper} on the chromatic number  in terms of the minimal degree of subgraphs, we  complete the proof.} 
\end{proof}

Combining Lemma \ref{L:crude_upper} and  Lemma \ref{L:density}, we get the following result
for  $\G \sim \SBM$.  
 {This result will be important in the proof of Theorem \ref{T:upper} to show that the number of colours required for the remaining vertices (given by a  set $U \subseteq V(\G)$) after a certain "optimal"  colouring process is negligible.}

\begin{thm}\label{T:uppercrude}
	Let $\G \sim \SBM$,  where $P = (p_{ij})_{i,j\in [k]}$ is such that $p_{ij}=p_{ji}$ and $0\leq p_{ij}<1$ for all $i,j\in [k]$.  
	Let $q^*$ and $\hat{q}$ be defined in \eqref{def:q-star}.  
	Let $\sigma \in [0,\sigma_0]$  for some fixed $0<\sigma_0<\frac14$.  Assume that
	$n$ is such that 
	\[
	n \geq \|\n\|^{1+o(1)},\qquad 
	k   = n^{o(1)}, \qquad  \log n \gg q^* \geq {n^{-\sigma}.}
	\]
	Let  $\uvec= (u_1,\ldots,u_k)^T \in \pReals^k$ be  such that  
	$
		\hat{q}(\uvec)  =  q^*n^{o(1)}.
	$
	Then,    for any $\eps>0$,
	\[
	\Pr\left(\max_{\b(U)\leq \uvec}\chi(\G[U]) >  \left(1 + \eps \right) 
	\frac{  \hat{q}(\uvec)  \|\uvec\| }
	{2 \log \left( q^* n\right)} \right) \leq \exp \left(- n^{2-4\sigma +o(1)}\right),
	\]
	where the maximum is over  all subsets $U \subseteq V(\G) $  such that $|U\cap B_i| \leq u_i$ for all $i\in [k]$.
\end{thm}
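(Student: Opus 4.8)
The plan is to combine the two block-by-block colouring lemmas (Lemma~\ref{L:crude_upper} and Lemma~\ref{L:density}) with a union bound over blocks, and then observe that the sum of the per-block colour counts telescopes into the stated bound involving $\hat q(\uvec)\|\uvec\|$. Fix $\eps>0$ and fix a subset $U\subseteq V(\G)$ with $\b(U)\preceq\uvec$; write $u_i':=|U\cap B_i|\le u_i$ for $i\in[k]$. Since $\G[U\cap B_i]$ is an induced subgraph of $\G[B_i]\sim\G(n_i,p_{ii})\subseteq\G(n,p_{ii})$ (after embedding into a graph on $n\ge\|\n\|^{1+o(1)}$ vertices with the same edge probability), I would colour each $U\cap B_i$ in two stages: first apply Lemma~\ref{L:crude_upper} with $p=p_{ii}$, $q=q_{ii}$, and the "leftover budget" $s=s_i:=\lceil n^{1-\delta}\rceil$ for a suitably small auxiliary $\delta=\delta(n)\to 0$ chosen so that $s_i=n^{1+o(1)}$ fails — wait, rather one wants $s_i$ small compared to $n$ but still $s_i = n^{1-o(1)}$; I would instead take $s_i$ to be a fixed small power $n^{1-\delta}$ and note Lemma~\ref{L:crude_upper} still applies since its statement permits $s=n^{1+o(1)}$ and we may take $s=n^{1+o(1)}$ by padding if necessary, so really take $s_i=n^{1-o(1)}$ via the lemma applied with the same $n$ — colouring all but $s_i$ vertices of $U\cap B_i$ with at most $(1+\eps/2)\,q_{ii}n/(2\log(q_{ii}n))$ colours; then apply Lemma~\ref{L:density} to the remaining $\le s_i$ vertices, giving at most $p_{ii}s_i\log n+1$ further colours, which is $n^{1-o(1)}$ and hence negligible. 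Summing over $i$ and recalling that $q_{ii}\le q^*=n^{-\sigma+o(1)}$ and $\log(q_{ii}n)=(1+o(1))\log(q^*n)$ whenever $q_{ii}$ is not too small, I get that the colour count is at most
\[
(1+\tfrac{3\eps}{4})\,\frac{\sum_{i\in[k]} q_{ii}\,u_i'}{2\log(q^*n)}+k\cdot n^{1-o(1)}
\le (1+\eps)\,\frac{\hat q(\uvec)\,\|\uvec\|}{2\log(q^*n)},
\]
where the last inequality uses $\sum_i q_{ii}u_i'\le\sum_i q_{ii}u_i=\hat q(\uvec)\|\uvec\|$, the hypothesis $\hat q(\uvec)=q^*n^{o(1)}$ together with $k=n^{o(1)}$ to absorb the $k\,n^{1-o(1)}$ error term (note $\hat q(\uvec)\|\uvec\|/\log(q^*n)\ge$ something of order $n^{1-o(1)}$ since $\|\uvec\|\ge$ a nontrivial fraction — this needs checking: it requires $\|\uvec\|$ to be at least polynomially large, which must follow from $\hat q(\uvec)=q^*n^{o(1)}$ being the weighted average over $\uvec$).

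For the probability bound I would take a union bound over the $k$ blocks and over the (at most $2^{\|\n\|}=2^{n^{1+o(1)}}$) choices of $U$. The failure probability of the Lemma~\ref{L:crude_upper} step for block $i$ is $\exp(-n^{2-4\sigma+o(1)})$ and for the Lemma~\ref{L:density} step it is $\exp(-\omega(p_{ii}^2s_i^2\log^2 n))=\exp(-n^{2-2\sigma-o(1)})$, which is even smaller since $\sigma<1/4$. Multiplying by $2^{n^{1+o(1)}}$ and by $k$ leaves the bound $\exp(-n^{2-4\sigma+o(1)})$ intact because $2-4\sigma>1$. One subtlety: Lemma~\ref{L:density} as stated is about $\chi(\G[W])$ for $\G=\G(n,p)$ and all $W$ of a given size, so applying it to $\G[B_i]$ needs $\G[B_i]$ to be (a subgraph of) a $\G(n,p_{ii})$, which it is; and the "$\exists W$" quantifier inside that lemma already handles the choice of the leftover set within $B_i$, so I only need the outer union bound over $i$ and over the coarse choice of $U$ — actually even the union over $U$ can be folded in since $\chi(\G[U])\le\sum_i\chi(\G[U\cap B_i])$ and the right side is controlled uniformly over $U$ once the events inside Lemmas~\ref{L:crude_upper} and~\ref{L:density} hold for each block.

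The main obstacle I anticipate is bookkeeping the $o(1)$'s so that the leftover contribution $k\cdot(p^* s\log n+1)$ and the rounding/embedding losses are genuinely negligible against the main term $\hat q(\uvec)\|\uvec\|/(2\log(q^*n))$; this needs a lower bound on $\|\uvec\|$ (equivalently on the number of "active" vertices), which should come from the hypothesis $\hat q(\uvec)=q^*n^{o(1)}$ forcing $\uvec$ to be supported substantially — if some $q_{ii}$ is much smaller than $q^*$, the corresponding block contributes little to $\hat q(\uvec)\|\uvec\|$ but could still need $n^{1-o(1)}$ colours from Lemma~\ref{L:density}, so one must check those blocks are colourable with fewer colours, e.g. by noting that when $q_{ii}$ is tiny the graph $\G[B_i]$ is sparse enough that $\chi\le n^{1-\Omega(1)}$ deterministically with high probability via a direct degree argument. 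A secondary technical point is ensuring Lemma~\ref{L:crude_upper}'s parameter regime ($\log n\gg q_{ii}\ge n^{-\sigma}$) holds for every block; for blocks with $q_{ii}<n^{-\sigma}$ one falls back on the sparse-colouring argument just mentioned rather than on Lemma~\ref{L:crude_upper}. Once these edge cases are dispatched, the summation and union bound are routine.
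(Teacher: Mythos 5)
Your broad outline — apply Lemma~\ref{L:crude_upper} block-by-block, mop up the leftover with Lemma~\ref{L:density}, and union bound — matches the paper's strategy. But you have correctly diagnosed, and then left unresolved, the obstruction that sinks a naive block-by-block application: Lemma~\ref{L:crude_upper} requires $q_{ii}\ge n^{-\sigma}$ for the block in question, and nothing in the hypotheses prevents some blocks from having $q_{ii}$ far smaller than $q^*$ (or even $q_{ii}=0$). You gesture at a ``sparse-colouring argument'' for such blocks, but never supply it, and it is not a one-line fix: for a block with tiny $q_{ii}$, Lemma~\ref{L:density} is not directly usable either (it needs $s\ge p^{-1}$, and with $p_{ii}$ tiny the required $s$ may exceed $n_i$), so a genuine additional argument is needed. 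Similarly, your padding $\G[B_i]\subseteq\G(n,p_{ii})$ handles small $n_i$ but does nothing about small $q_{ii}$.

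The paper closes this gap with a single coupling that handles both issues simultaneously and is the key idea missing from your proposal. It couples $\G_i\sim\G(n_i,p_{ii})$ with $\G_i'\sim\G(n_i',p_{ii}')$ where $n_i':=n_i+s$ for a carefully tuned $s=\lfloor \hat q(\uvec)\|\uvec\|/(kq^*\log^3\|\n\|)\rfloor$, and where $p_{ii}'$ is the \emph{boosted} probability defined by $q_{ii}':=q_{ii}+q_0$ with $q_0:=\hat q(\uvec)/(k\log\|\n\|)$. Since $\hat q(\uvec)=q^*n^{o(1)}$ and $q^*\ge n^{-\sigma}$, one gets $q_{ii}'\ge q_0= n^{-\sigma+o(1)}$ uniformly over all blocks, so Lemma~\ref{L:crude_upper} and Lemma~\ref{L:density} apply to \emph{every} $\G_i'$ without case distinctions; and the $s$ added dummy vertices guarantee the set fed to Lemma~\ref{L:crude_upper} has size $u_i'\ge s= n^{1+o(1)}$. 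The parameters $s$ and $q_0$ are tuned so that the extra colours caused by the size and probability inflation (quantified via $(u_i+s)(q_{ii}+q_0)=u_iq_{ii}+o(\hat q(\uvec)\|\uvec\|/k)$) sum to a lower-order term, which is exactly the bookkeeping you flagged as the ``main obstacle'' but did not carry out. Without this coupling your proof attempt does not go through for blocks with $q_{ii}\ll q^*$, so the proposal as written has a genuine gap rather than being merely incomplete in routine detail.
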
    

\begin{proof}
	For each $i\in[k]$,  we let  $\G_i   = \G(n_i, p_{ii})$  denote the induced subgraph of $\G[B_i]$. 
	Let   
	\[
	n_i' := n_i+s, \qquad \text{where }  s := \left\lfloor\frac{ \hat{q}(\uvec)  \|\uvec\|}{k  q^* \log^3   \|\n\|}\right\rfloor.
	\]    
	Define $p_{ii}' \in (0,1)$ to be such that 
	\[
	q_{ii}':= \log \frac{1}{1-p_{ii}'}=  q_{ii} +  q_0,  
	\qquad \text{where }
	q_0:=\frac{ \hat{q}(\uvec)    }{k   \log \|\n\|}.
	\]
	Since $\log n\gg  q^*   \geq {n^{-\sigma}}$ and $\hat{q}(\uvec) = q^*n^{o(1)}$, we get that 
	\begin{equation}\label{eq:q'}
	\log n \gg 	q_{ii} +o(q^*) \geq q_{ii}'  \geq  q_0\geq 
	n^{-\sigma+o(1)}.  
	\end{equation}
	{By adding  $s$ dummy vertices to each block $B_i$ and introducing some rejection probability,  for each $i\in[k]$ there is a coupling $(\G_i',\G_i)$ such that $\G_i' = \G(n_i', p_{ii}')$  and $\G_i$ is a subgraph of $\G_i'$ with probability $1$.}

	Consider any $U_i \subseteq V(\G_i)$ such that  $|U_i|  \leq u_i$ and let $U_i'$ consist of the 
	union of $U_i$ and    $s$ dummy vertices  of $\G_i'$. Note that, by assumptions,  
	\[
	n^{1+o(1)} \leq  s\leq  u_i':=|U_i'| \leq u_i +  s.
	\]
		Using   \eqref{eq:q'},  we find that  $sq_0 =q^* \|\uvec\|  n^{o(1)} $ and 
	\begin{equation}\label{s:bounds}
	 \begin{aligned}(u_i +s) (q_{ii} +q_0) &=  u_i q_{ii}  +   o\left( \frac{\hat q(\uvec) \|\uvec\|  n^{o(1)}}{k} \right),
	 	\\(u_i +s) (q_{ii} +q_0) 
	&\leq (1+o(1)) q^*\|\uvec\|.
	\end{aligned} 
	\end{equation}

	Since $  q^*  \geq {n^{-\sigma}}$,  we get that  
	\[
	\log (q_{ii}'u_i')  =
	(1+o(1)) \log \left(q^*n\right).
	\]
	Using  \eqref{eq:q'}, we find that the assumptions of Lemma \ref{L:crude_upper} hold
	for $\G_i[U_i']= \G(u_i', p_{ii}')$.
	Applying  Lemma \ref{L:crude_upper} with $\eps'=\dfrac{\eps}{2}$, we find that
	there is a colouring of    $u_i' - s = u_i$ vertices of  $\G_i'$   using at most 
	\[
	\left(1+ \eps'\right) \frac{q_{ii}' u_i'}{ 2\log (q_{ii}'u_i')} 
	\leq  \left(1 + \dfrac{\eps}{2}+ o(1)\right)   \frac{q_{ii}'  u_i'}{ 2\log \left(q^*n\right)} 
	\]
	colours with probability at least 
	\[
	1 - \exp\left(- (u_i')^{2-4\sigma+ o(1)}\right) \geq
	1  - \exp\left(- n^{2-4\sigma + o(1)}\right). 
	\]
	Recalling that  $2-4\sigma>1$  and applying the union bound,  we get that
	the probability that there are some $i\in[k]$ and  $U_i  \subseteq V(\G_i)$ with $|U_i| \leq u_i$ for which such colouring does not exist is bounded above by
	\[
	\sum_{i=1}^k 2^{n_i + s} \exp\left(- n^{2-4\sigma+ o(1)}\right)
	=\exp\left(- n^{2-4\sigma + o(1)} \right).
	\]
	
	Next, we show that only a small number of colours is needed to colour the remaining $s$ vertices from each $V(\G_i')$. 
	Applying Lemma \ref{L:density}, we get that  any subset 
	$W\subseteq V(\G_i')$ with $s  \geq  n^{1+o(1)} \geq (p_{ii}')^{-1}$ vertices can be coloured using  at most
	\[
	p_{ii}' s \log n_i'+1  \leq q^* s \log n_i' +1 \ll \frac{ \hat{q}(\uvec) \|\uvec\|}{k \log (q^*n)}
	\]
	colours   with probability at least 
	\[
	1-  \exp\left(-\omega((p_{ii}')^2s^2 \log^2 n_i')\right)  
	\geq  1 - \exp\left(- n^{2-2\sigma+o(1)}\right).
	\]
	The last inequality is clear for $p_{ii}'\geq \frac12$. 
	For $p_{ii}'<\frac12$,  one can use \eqref{eq:q'} together with the inequality 
	$
	p_{ii}' \geq \frac{q_{ii}'}{ 2\log 2},
	$
	which follows from the fact that
	$t^{-1}\log \frac{1}{1-t}$ is monotonically increasing for $t\in (0,1)$. 
	Applying the union bound,   
	we can complete the colouring of all sets $U_i'$ for $i\in[k]$
	using at most 
	\[
	\sum_{i\in [k]} (p_{ii}' s \log n_i'+1 ) \ll   \frac{  \hat{q}(\uvec) \|\uvec\|}{\log (q^*n)}
	\] 
	colours with probability at least  	
	\[
	1 - k \exp\left(- n^{2-2\sigma+o(1)}\right) 
	\geq 1 - \exp\left(-n^{2-4\sigma+o(1)}\right).
	\]
	
	Now, consider any $U\subseteq V(\G)$  such that $|U\cap B_i| \leq u_i$ for all $i\in [k]$.  
	Combining the above bounds and  using the inequality in the second line of \eqref{s:bounds}, 
	we  get that  $U$  can be coloured with  at most 
	\begin{align*}
	\sum_{i\in [k]} &\left(1+\frac{ \eps}{2}+ o(1)\right)   \frac{q_{ii}' u_i'}{2 \log (q^*n)} 
	+ \sum_{i\in [k]} (p_{ii}'s \log n_i'+1 ) 
	\\
	&=  \left(1+ \frac{\eps}{2}+ o(1)\right)   \frac{  \sum_{i \in k} \left( u_i q_{ii} + o\left(\frac{ \hat{q}(\uvec) \|\uvec\|}{k}\right) \right)}{2\log (q^*n)}
	+  o\left(\frac{ \hat{q}(\uvec) \|\uvec\|}{\log (q^*n)}\right)
	\\     &\leq  \left( 1 + \eps \right) \frac{  \hat{q}(\uvec) \|\uvec\| }{2\log (q^*n)}
	\end{align*}
	colours with probability at least $ 1 - \exp\left(- n^{2-4\sigma+o(1)}\right)$.
\end{proof}


\section{Optimal colouring: proof of Theorem \ref{T:upper}}\label{S:optimal} 
In this section we prove Theorem \ref{T:upper}. First,  applying Lemma \ref{L:janson} multiple times, we  find 
 there are approximately $\frac{w(\n)}{2 \log (q^* \|\n\|)}$  independent sets covering almost all vertices of $\G \sim \M(\n,P)$. Then, we use Theorem \ref{T:uppercrude}  to estimate the number of colours for the remaining vertices, proving that 
 \[
 	\chi(\G) \leq (1+o(1)) \frac{w(\n)}{2 \log (q^* \|\n\|)} + {O \left( \frac{k \hat{q}(\n) q^* \|\n\|}{\log^2 (q^* \|\n\|)}\right)}
 \]
 with probability sufficiently close to $1$.  Finally, we obtain Theorem \ref{T:upper} by  applying this upper bound to each random graph 
 {corresponding}  to an optimal system of  $k$ vectors  $(\xvec^{(t)})_{t\in [k]}$ from $\pReals^k$  such that 
 \[
 	\n =  \sum\nolimits_{t \in [k]}\xvec^{(t)} \qquad \text{and} \qquad 	w_*(\n) = \sum\nolimits_{t \in [k]} w(\xvec^{(t)}). 
 \]
Everywhere in this section, we use notations $w(\cdot)$ and  $w_*(\cdot)$ in place of   $w(\cdot,Q)$ and  $w_*(\cdot,Q)$, where $Q=Q(P)$ is the matrix defined by \eqref{Q_def},
 and  $q^*,\hat{q}(\cdot)$ are the same as in \eqref{def:q-star}.


\subsection{Covering by independent sets}
Recall that, for  $\xvec = (x_1,\ldots,x_k)^T \in \pReals^k$,  we defined
\[ \lfloor \xvec\rfloor := (\lfloor x_1\rfloor, \ldots, \lfloor x_k\rfloor)^T
\qquad \text{and} \qquad  x_* := \min_{i \in [k]} x_i.
\]
Provided $x_*>0$,  we have,  for any $s>0$, 
\begin{equation}\label{eq: xvec}
s \|  \xvec \| \geq  \| \lfloor s \xvec\rfloor  \|  \ \geq \left( s-\frac{1}{x_*}\right)  \|  \xvec \|.
\end{equation}
Similarly, using {the monotonicity and scaling properties in Theorem} \ref{l:Qmatrix}(a), we get that
\begin{equation}\label{eq: xvec2}
s w(\xvec) \geq w( \lfloor s\xvec \rfloor)  \geq \left( s-\frac{1}{x_*}\right)  w(\xvec).
\end{equation}

{The next lemma shows that we can cover almost all  vertices of a random graph from the stochastic block model with the large "balanced" independent sets  provided by   Lemma~\ref{L:janson}.}

\begin{lemma}\label{L:upper}
	Let $\G \sim \SBM$,  where $P = (p_{ij})_{i,j\in [k]}$ is such that $p_{ij}=p_{ji}$ and $0\leq p_{ij}<1$ for all $i,j\in [k]$.  
	Let $\sigma \in [0,\sigma_0]$  for some fixed $0<\sigma_0<\frac14$.  
	Assume that     $w(\n)$
	and  $n_*$ satisfy the following as $\|\n\| \rightarrow \infty$: 
	\begin{align*}
	w (\n)  \geq  \|\n\|^{1-\sigma}, \qquad n_* = \|\n\|^{1+o(1)}, \qquad  n_* \gg \frac{w(\n)}{\log \|\n\|}. 
	\end{align*}
	Then, for any fixed constant $\eps\in {(0,1)}$, 
		with {probability
	at least 
	\[
	1 - \exp\left(-\|\n\|^{2-4\sigma+o(1)}\right),
	\] } 
	there is a colouring of  $\G$  with at most   
	$ \left(1 + \eps\right) \dfrac{w(\n)}{2\log (w(\n))}$ colours 
	covering at least 
	\[ 
	n_i \left(1 -  \frac{1}{ \log^2 \|\n\|} - 5 \eps^{-1} \frac{w(\n)}  {   n_*\log (w(\n))}\right)
	\]
	vertices from each block $B_i$ for all $i \in [k]$.
\end{lemma}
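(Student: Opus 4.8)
The plan is to colour $\G$ greedily, repeatedly peeling off the ``balanced'' independent sets produced by Lemma~\ref{L:janson}. Fix the stopping parameter $\delta:=\log^{-2}\|\n\|$ and put $m_{\min}:=\delta\,n_*=\|\n\|^{1-o(1)}$. First I would set up a single favourable event. For every vertex set that is a union $W=\bigcup_{i\in[k]}S_i$ with $S_i\subseteq B_i$, whose profile $\b(W)$ is proportional to $\n$ up to a $(1+o(1))$ factor and satisfies $\|\b(W)\|\ge m_{\min}$, the restriction $\G[W]\sim\M(\b(W),P)$ meets the hypotheses of Lemma~\ref{L:janson}, because proportionality preserves the ratios $w(\x)/\|\x\|$ and $x_*/\|\x\|$ and keeps $\log\|\x\|=\Theta(\log\|\n\|)$. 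Applying Lemma~\ref{L:janson} to each such $\G[W]$ and taking a union bound over the at most $2^{\|\n\|}$ candidate sets, with probability at least $1-2^{\|\n\|}\exp(-m_{\min}^{2-4\sigma+o(1)})=1-\exp(-\|\n\|^{2-4\sigma+o(1)})$ (here $2-4\sigma>1$ makes $m_{\min}^{2-4\sigma}\gg\|\n\|$), the event $\mathcal E$ holds: every such $\G[W]$ contains an independent set $U$ with $\b(U)=\lfloor\nu(\b(W))\,\b(W)\rfloor$, where $\nu(\x)=(2+o(1))\log(w(\x))/w(\x)$.

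On $\mathcal E$ I run the peeling. Put $\G^{(0)}:=\G$, $\n^{(0)}:=\n$; while $\|\n^{(t)}\|\ge m_{\min}$, choose $U^{(t)}\subseteq V(\G^{(t)})$ with $\b(U^{(t)})=\lfloor\nu^{(t)}\n^{(t)}\rfloor$ where $\nu^{(t)}:=\nu(\n^{(t)})$, assign it a new colour, and set $\G^{(t+1)}:=\G^{(t)}-U^{(t)}$, $\n^{(t+1)}:=\n^{(t)}-\lfloor\nu^{(t)}\n^{(t)}\rfloor$. Since the procedure is adaptive but $\mathcal E$ concerns all admissible $W$ simultaneously, no conditioning on the random graph is needed; I only have to check inductively that every $\G^{(t)}$ that arises is admissible. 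This follows because each step removes essentially the same proportion from every block: using the floor estimates \eqref{eq: xvec}, \eqref{eq: xvec2} and the scaling/monotonicity of Theorem~\ref{l:Qmatrix}(a), one gets $\n^{(t)}=c_t\,\n+\boldsymbol\delta^{(t)}$ with $c_t=\prod_{s<t}(1-\nu^{(s)})$ and an error vector with $\|\boldsymbol\delta^{(t)}\|_\infty\le 1/\nu^{(0)}=(1+o(1))w(\n)/(2\log w(\n))$; in particular $\G^{(t)}$ stays proportional to $\n$ and of size at least $m_{\min}$ while the loop runs, and $\log w(\n^{(t)})=(1+o(1))\log w(\n)$ throughout because $\|\n^{(t)}\|=\|\n\|^{1-o(1)}$.

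The remaining work is bookkeeping. In step $t$ the quantity $w(\cdot)$ of the block vector decreases by $\nu^{(t)}w(\n^{(t)})(1+o(1))=2\log(w(\n^{(t)}))(1+o(1))=(2+o(1))\log w(\n)$, so after $T$ steps $w(\n^{(T)})=w(\n)-(2+o(1))T\log w(\n)$; the loop terminates the first time $\|\n^{(T)}\|<m_{\min}$, i.e.\ when $w(\n^{(T)})=(1+o(1))\delta\,w(\n)$, whence $T=(1+o(1))(1-\delta)\tfrac{w(\n)}{2\log w(\n)}\le(1+\eps)\tfrac{w(\n)}{2\log w(\n)}$ for $\|\n\|$ large, which bounds the number of colours used. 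For the coverage bound, iterating $n_i^{(t+1)}\le(1-\nu^{(t)})n_i^{(t)}+1$ gives $n_i^{(T)}\le c_T n_i+\sum_{t<T}\prod_{t<s<T}(1-\nu^{(s)})\le c_T n_i+1/\nu^{(0)}$, so the uncovered count in block $B_i$ is at most $(1+o(1))\delta\,n_i+(1+o(1))\tfrac{w(\n)}{2\log w(\n)}$; dividing by $n_i\ge n_*$ and using $\delta=\log^{-2}\|\n\|$ yields the claimed fraction $\tfrac{1}{\log^2\|\n\|}+5\eps^{-1}\tfrac{w(\n)}{n_*\log w(\n)}$, the constant $5\eps^{-1}$ providing slack for the accumulated $o(1)$'s and for blocks that stall once $\nu^{(t)}n_i^{(t)}<1$.

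The main obstacle is the control of accumulated errors: the loop may run for $\Theta(w(\n)/\log w(\n))$ steps, so I have to ensure that the proportionality $\n^{(t)}\approx c_t\n$, the approximation $\log w(\n^{(t)})=(1+o(1))\log w(\n)$, the floor losses, and the per-step failure probability $\exp(-\|\n^{(t)}\|^{2-4\sigma+o(1)})$ all remain valid over that many iterations. The clean way to arrange this is precisely to stop at the polylogarithmic threshold $\delta=\log^{-2}\|\n\|$, so that $\|\n^{(t)}\|$ never leaves the range $\|\n\|^{1-o(1)}$ and every one of the above estimates holds with a uniform $o(1)$, rather than trying to drive the residual vertex set polynomially small.
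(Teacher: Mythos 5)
Your overall strategy --- a one-shot union bound over candidate subgraphs, then a greedy peeling that repeatedly extracts the balanced independent sets supplied by Lemma~\ref{L:janson}, tracking the profile multiplicatively and absorbing rounding errors --- is the same as the paper's. Where you diverge is the stopping threshold, and this is where there is a genuine gap. You stop when $\|\n^{(t)}\|<m_{\min}:=\delta n_*$ with $\delta=\log^{-2}\|\n\|$, and you assert that proportionality $\n^{(t)}\approx c_t\n$ persists throughout because the cumulative rounding drift is at most $1/\nu^{(0)}=(1+o(1))\,w(\n)/(2\log w(\n))$ per coordinate. But for proportionality (and hence for the hypotheses of Lemma~\ref{L:janson} to hold for $\G^{(t)}$, in particular $n_*^{(t)}=\|\n^{(t)}\|^{1+o(1)}$ and $n_*^{(t)}\gg w(\n^{(t)})/\log\|\n^{(t)}\|$) you need the drift to remain $o(c_t n_*)$ at every step, that is $c_tn_*\gg w(\n)/\log w(\n)$. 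Your threshold only yields $c_t n_*\gtrsim\delta n_*^2/\|\n\|$, so you would need $n_*^2\gg\|\n\|\,w(\n)\log^2\|\n\|/\log(w(\n))$. This does \emph{not} follow from the Lemma's assumptions $n_*=\|\n\|^{1+o(1)}$ and $n_*\gg w(\n)/\log\|\n\|$: take for instance $n_*=\|\n\|/\log^{10}\|\n\|$ and $w(\n)=\|\n\|/\log^{20}\|\n\|$ (admissible for any fixed $\sigma>0$); then $n_*^2=\|\n\|^2/\log^{20}\|\n\|$ while $\|\n\|\,w(\n)\log^2\|\n\|/\log(w(\n))$ is of order $\|\n\|^2/\log^{19}\|\n\|$, so the required inequality goes the wrong way. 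In that regime your loop keeps running into a range where rounding drift swamps the smallest block and the inductive verification of admissibility you appeal to fails.

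The paper's proof is organized around avoiding precisely this: it stops when $\|\n^{(j)}\|<\theta\|\n\|$ where $\theta=\tfrac{1}{2\log^2\|\n\|}+3\eps^{-1}\tfrac{w(\n)}{n_*\log(w(\n))}$, and the second summand is exactly what Claim~\ref{claim:n*} uses to keep the accumulated floor losses to an $\eps/3$-fraction of $n_*^{(j)}$. (Two further, smaller differences: the paper keeps $\nu:=(2-\eps)\log(w(\n))/w(\n)$ fixed and removes $\lfloor\nu\|\n\|\,\n^{(j)}/\|\n^{(j)}\|\rfloor$ vertices at step $j$, extracting a \emph{subset} of the Janson set after checking $\nu'\geq\nu\|\n\|/\|\n^{(j)}\|$, so the step sizes are literally equal and the colour count is cleanly $\leq(1+\eps)/\,\nu$; your adaptive $\nu^{(t)}$ gives the same step size only up to a $(1+o(1))$ factor, which works but requires the $o(1)$ in Lemma~\ref{L:janson}'s conclusion to be uniform over all $O(w(\n)/\log w(\n))$ invocations.) If you replace $m_{\min}$ by $\theta\|\n\|$ --- or by anything at least $3\eps^{-1}\|\n\|\,w(\n)/(n_*\log w(\n))$ --- the rest of your argument should go through.
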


Throughout this section we let $\eps\in (0,1)$ be fixed and set 
\[
\nu: = (2-\eps )\frac{\log (w(\n))}{w(\n)}  \qquad  \text{and} \qquad 
\theta:=  \frac{ 1}{ 2\log^2 \|\n\|}  + 3\eps^{-1} \frac{w(\n)}  {   n_*\log (w(\n))}.
\]
To prove Lemma \ref{L:upper} we first claim some auxiliary results, whose proofs we defer to the end of this section. 

\begin{claim}\label{claim:U} 
	With probability at least
	\[
	1 - \exp\left(-\|\n\|^{2-4\sigma +o(1)}\right)
	\] 
	there exists a  sequence  $(U_1,\ldots, U_\ell)$ of  disjoint   independent sets  in $\G$ satisfying  the following.
	\begin{itemize}
		\item[(i)]  
		For all $j \in [\ell]$, we have 
		\[
		\b(U_j) = \left\lfloor \frac{\nu \|\n\|}{ \|\n^{(j)}\|} \n^{(j)} \right\rfloor,
		\]
		where 
		\begin{equation}\label{def_nj}
		\n^{(j)} = (n_1^{(j)},\ldots,n_k^{(j)})^T: = \n - \sum_{i = 1}^{j-1} \b(U_i) 
		\end{equation}
		satisfies   $\|\n^{(j)}\|\geq   \theta \|\n\| $.
		\item[(ii)] 
		The set $\bigcup_{j=1}^{\ell} U_j$ covers  all but  at most $\theta e^{\frac12} n_i$ vertices  from each  block $B_i$. 
		That is, for all $i\in [k]$, we have
		$\sum_{j=1}^{\ell} b_i(U_j) \ge    \left(1 - \theta e^{\frac12}\right) n_i.$
	\end{itemize}
\end{claim}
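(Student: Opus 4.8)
The plan is to build $(U_1,\dots,U_\ell)$ by a greedy process whose \emph{target profiles} are fixed before $\G$ is revealed, so that Lemma~\ref{L:janson} enters only through a union bound over a deterministic family of vertex subsets. Set $\n^{(1)}:=\n$, and as long as $\|\n^{(j)}\|\ge\theta\|\n\|$ put $\rho_j:=\nu\|\n\|/\|\n^{(j)}\|$, $\l^{(j)}:=\lfloor\rho_j\,\n^{(j)}\rfloor$ and $\n^{(j+1)}:=\n^{(j)}-\l^{(j)}$; let $\ell$ be the last index with $\|\n^{(\ell)}\|\ge\theta\|\n\|$. Since $\n^{(j)}$ and $\l^{(j)}$ depend only on $\n,\nu,\theta$, each family $\mathcal S_j:=\{S\subseteq V(\G):\b(S)=\n^{(j)}\}$ is deterministic and $\bigl|\bigcup_{j\le\ell}\mathcal S_j\bigr|\le\ell\,2^{\|\n\|}=e^{O(\|\n\|)}$.

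The first and most delicate step is a purely arithmetic analysis of this recursion. Unrolling $n_i^{(j+1)}=(1-\rho_j)n_i^{(j)}+\{\rho_jn_i^{(j)}\}$ and using $c\log(1/c)\le e^{-1}$ for $c\in(0,1]$, one shows, with $c_j:=\prod_{i<j}(1-\rho_i)$, that $c_jn_i\le n_i^{(j)}\le c_jn_i+e^{-1}\ell$ for all $i$ and all $j\le\ell$, that $\ell=(1+o(1))\nu^{-1}=\Theta\!\bigl(w(\n)/\log\|\n\|\bigr)$, and that $c_j\ge\tfrac23\theta$. The term $3\eps^{-1}w(\n)/(n_*\log w(\n))$ in $\theta$ is exactly what forces $\ell\le\tfrac13\theta n_*$, hence $e^{-1}\ell\le O(\eps)\,c_jn_i$, so every $\n^{(j)}$ ($j\le\ell$) is within a factor $1+O(\eps)$ of $c_j\n$ componentwise; by the scaling and monotonicity of $w(\cdot)$ (Theorem~\ref{l:Qmatrix}(a)) the same is true of $w(\n^{(j)})$ and of $\|\n^{(j)}\|$. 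Combining this with $w(\n)\ge\|\n\|^{1-\sigma}$, $n_*=\|\n\|^{1+o(1)}$ and $n_*\gg w(\n)/\log\|\n\|$, I would then check that (a) each $\n^{(j)}$, $j\le\ell$, satisfies the hypotheses of Lemma~\ref{L:janson} with $\sigma$ replaced by some $\sigma+o(1)<\sigma_0$ — the factors $c_j$ cancel in the crucial ratio $w(\n^{(j)})/(\min_i n_i^{(j)}\cdot\log\|\n^{(j)}\|)$ — and (b) $\rho_j$ lies at or below the threshold $\nu_j^\star:=(2+o(1))\log w(\n^{(j)})/w(\n^{(j)})$ furnished by Lemma~\ref{L:janson} for $\mathcal{G}(\n^{(j)},P)$, because $\rho_j\le\nu/c_j=(2-\eps)\log w(\n)/(c_jw(\n))$ while $\nu_j^\star\ge(2-o(1))\log w(\n)/((1+O(\eps))c_jw(\n))$ and $(2-\eps)(1+O(\eps))<2$.

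Applying Lemma~\ref{L:janson} to $\G[S]\sim\mathcal{G}(\n^{(j)},P)$ for every $S\in\mathcal S_j$ and taking a union bound, the event
\[
\mathcal E:\qquad \I_{\nu_j^\star}(\G[S])\neq\emptyset\quad\text{for all }j\le\ell\text{ and all }S\text{ with }\b(S)=\n^{(j)}
\]
fails with probability at most $e^{O(\|\n\|)}\exp\bigl(-\|\n\|^{2-4\sigma+o(1)}\bigr)=\exp\bigl(-\|\n\|^{2-4\sigma+o(1)}\bigr)$, using $2-4\sigma>1$ and $\|\n^{(j)}\|\ge\theta\|\n\|=\|\n\|^{1-o(1)}$. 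On $\mathcal E$ I would build the $U_j$ greedily: take $U_1$ to be any independent set with $\b(U_1)=\l^{(1)}$, obtained from an element of $\I_{\nu_1^\star}(\G)$ by discarding in each block the surplus allowed by $\rho_1\le\nu_1^\star$; given disjoint $U_1,\dots,U_{j-1}$ with $\b(U_i)=\l^{(i)}$, the residual set $S_j:=V(\G)\setminus(U_1\cup\dots\cup U_{j-1})$ has $\b(S_j)=\n^{(j)}$, so $\mathcal E$ provides $U'\in\I_{\nu_j^\star}(\G[S_j])$ and hence $U_j\subseteq U'$ with $\b(U_j)=\l^{(j)}$ (possible since $\lfloor\rho_jn_i^{(j)}\rfloor\le\lfloor\nu_j^\star n_i^{(j)}\rfloor=b_i(U')$ for each $i$). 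Property (i) is immediate from the construction. For (ii), the number of uncovered vertices of $B_i$ equals $n_i^{(\ell+1)}\le c_{\ell+1}n_i+e^{-1}\ell$; since $c_{\ell+1}\|\n\|\le\|\n^{(\ell+1)}\|<\theta\|\n\|$ and $e^{-1}\ell\le\tfrac13\theta n_*\le\tfrac13\theta n_i$, this is $<\tfrac43\theta n_i<e^{1/2}\theta n_i$, as required.

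The hard part is the arithmetic of the second paragraph: keeping the residual profiles $1+O(\eps)$-proportional to $\n$ over all $\ell=\Theta(w(\n)/\log\|\n\|)$ rounds, so that Lemma~\ref{L:janson} stays applicable and $\rho_j$ stays below $\nu_j^\star$ at every step. This is precisely what the constants $2-\eps$ in $\nu$ and $3\eps^{-1}$ in $\theta$ — together with the generous factor $e^{1/2}$ in part (ii) — are calibrated for; everything else is a single use of Janson's inequality per profile together with the convexity facts of Theorem~\ref{l:Qmatrix}.
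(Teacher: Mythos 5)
Your proposal is correct and follows essentially the same route as the paper: fix the deterministic profile sequence $\n^{(1)},\n^{(2)},\dots$ in advance, show by induction that the residual profiles stay proportional to $\n$ (so that Lemma~\ref{L:janson} remains applicable and $\rho_j$ stays below the threshold $\nu'_j$), apply Lemma~\ref{L:janson} at each step with a union bound over all vertex subsets having the given profile, and conclude (ii) from the profile estimates at termination. The only real difference is bookkeeping: the paper controls the proportionality via the chain of inequalities \eqref{eq:upperw}--\eqref{eq:upperlowerw} and Claim~\ref{claim:n*}, which track the ratios $w(\n^{(j)})/\|\n^{(j)}\|$ and $n_*^{(j)}/\|\n^{(j)}\|$ directly; you instead prove an explicit componentwise sandwich $c_j n_i\le n_i^{(j)}\le c_j n_i+O(\ell)$ and pass through Theorem~\ref{l:Qmatrix}(a). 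Both give the same $1+O(\eps)$-proportionality with the same slack provided by the $3\eps^{-1}$ term in $\theta$. One small slip: the constant $e^{-1}$ in your claimed bound $n_i^{(j)}\le c_j n_i+e^{-1}\ell$ (and the stated inequality ``$c\log(1/c)\le e^{-1}$'' purportedly justifying it) is not needed and does not obviously hold; the crude bound $n_i^{(j)}\le c_j n_i+(j-1)\le c_j n_i+\ell$ from unrolling $n_i^{(j+1)}\le(1-\rho_j)n_i^{(j)}+1$ already suffices, since your estimate $\ell\le\frac{\eps}{3(2-\eps)}\theta n_*$ leaves enough room to absorb the extra factor $e$. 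Likewise, the bound you write as $\ell\le\tfrac13\theta n_*$ should really be the sharper $\ell\le O(\eps)\,\theta n_*$, which is what you actually use and what the $3\eps^{-1}$ in $\theta$ is there to guarantee; with that correction the conclusion $n_i^{(\ell+1)}<e^{1/2}\theta n_i$ in part (ii) goes through.
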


Our next claim gives the upper bound on the length of the sequence  $(U_1,\ldots, U_\ell)$ of disjoint   independent sets  in $\G$ from  Claim \ref{claim:U}.

\begin{claim} \label{claim:ell-bound}
	Suppose there exists a sequence $(U_1,\ldots,U_\ell)$ of disjoint independent subsets in $\G$  such that condition (i)  of Claim \ref{claim:U} holds. Then 
	$$\ell \ \leq\  \left(1+ \eps\right) \frac{w(\n)}{2\log (w(\n))}.$$
\end{claim}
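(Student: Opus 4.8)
The plan is to bound $\ell$ by tracking how fast the quantity $\|\n^{(j)}\|$ decreases as $j$ increases. By condition~(i) of Claim~\ref{claim:U}, at step $j$ we remove the independent set $U_j$ with $\b(U_j) = \lfloor \frac{\nu\|\n\|}{\|\n^{(j)}\|}\n^{(j)}\rfloor$, so $|U_j| = \|\b(U_j)\|$. Applying the lower bound in \eqref{eq: xvec} with $s = \frac{\nu\|\n\|}{\|\n^{(j)}\|}$ and $\xvec = \n^{(j)}$ (whose minimum coordinate is at least the minimum coordinate of the current vector, which stays comparable to $n_*$ while $\|\n^{(j)}\| \geq \theta\|\n\|$), we get
\[
\|\b(U_j)\| \;\geq\; \Bigl(\frac{\nu\|\n\|}{\|\n^{(j)}\|} - \frac{1}{n_*^{(j)}}\Bigr)\|\n^{(j)}\| \;=\; \nu\|\n\| - \frac{\|\n^{(j)}\|}{n_*^{(j)}},
\]
and the error term $\frac{\|\n^{(j)}\|}{n_*^{(j)}}$ is negligible compared to $\nu\|\n\|$ because $n_* = \|\n\|^{1+o(1)}$ and $\nu\|\n\| = \Theta\bigl(\frac{\|\n\|\log(w(\n))}{w(\n)}\bigr) = \|\n\|^{\sigma+o(1)}$, which is much larger. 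Hence each step removes at least $(1-o(1))\,\nu\|\n\|$ vertices.

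First I would make this precise: since $\|\n^{(j+1)}\| = \|\n^{(j)}\| - \|\b(U_j)\| \leq \|\n^{(j)}\| - (1-o(1))\nu\|\n\|$, after $\ell$ steps we have $\|\n^{(\ell+1)}\| \leq \|\n\| - \ell(1-o(1))\nu\|\n\|$. Since $\|\n^{(\ell+1)}\| \geq 0$ (in fact condition~(i) forces $\|\n^{(j)}\| \geq \theta\|\n\|$ for every $j \leq \ell$, but $\geq 0$ already suffices), this gives
\[
\ell \;\leq\; \frac{1}{(1-o(1))\,\nu} \;=\; (1+o(1))\,\frac{w(\n)}{(2-\eps)\log(w(\n))}.
\]
Then I would absorb the $o(1)$ and compare $(1+o(1))/(2-\eps)$ against $(1+\eps)/2$: since $\eps \in (0,1)$ is a fixed constant, the elementary inequality $\frac{1}{2-\eps} < \frac{1+\eps}{2}$ (equivalent to $2 < (1+\eps)(2-\eps) = 2 + \eps - \eps^2$, i.e. $\eps > \eps^2$, true for $\eps \in (0,1)$) leaves enough slack to swallow the $1+o(1)$ factor for $\|\n\|$ large. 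This yields $\ell \leq (1+\eps)\frac{w(\n)}{2\log(w(\n))}$ as claimed.

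The only mild subtlety — not really an obstacle — is justifying that the per-step decrement $\nu\|\n\|$ genuinely dominates the rounding error $\|\n^{(j)}\|/n_*^{(j)}$ uniformly over $j$; this needs the crude bounds $\|\n^{(j)}\| \leq \|\n\|$ and $n_*^{(j)} \geq \theta n_* / \|\n\|$-type estimates, or more simply one can note that as long as $\|\n^{(j)}\| \geq \theta\|\n\|$ the individual coordinates satisfy $n_i^{(j)} \geq (\nu\|\n\|/\|\n^{(j)}\|)$-scaled versions bounded below, so the floor loses at most $1$ per coordinate, i.e.\ at most $k = \|\n\|^{o(1)}$ in total, which is again negligible against $\nu\|\n\| = \|\n\|^{\sigma+o(1)}$. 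Everything else is routine arithmetic with the parameters $\nu$, $\theta$, $w(\n)$, $n_*$ fixed at the start of the section.
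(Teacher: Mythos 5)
Your argument follows essentially the same route as the paper's: bound the per-step decrement $|U_j| = \|\b(U_j)\| \geq \bigl(\tfrac{\nu\|\n\|}{\|\n^{(j)}\|} - \tfrac{1}{n_*^{(j)}}\bigr)\|\n^{(j)}\| = (1-o(1))\nu\|\n\|$, sum over $j$, and exploit the slack $\tfrac{1}{2-\eps} < \tfrac{1+\eps}{2}$. The paper phrases this as a contradiction, but the arithmetic is identical. One point you gloss over: the control on $n_*^{(j)}$ that makes the error term $\tfrac{1}{n_*^{(j)}}$ negligible is exactly Claim~\ref{claim:n*}, which is only valid for $j \leq \frac{w(\n)}{\log(w(\n))}$; this is why the paper assumes $\ell$ is too large for contradiction and then restricts attention to those indices. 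Your direct phrasing ("after $\ell$ steps") implicitly assumes the per-step bound for \emph{all} $j \in [\ell]$, which needs the same truncation to be airtight. Also, the side remark that $\nu\|\n\| = \|\n\|^{\sigma+o(1)}$ is an upper bound, not a two-sided estimate — the only guaranteed lower bound under the hypotheses of Lemma~\ref{L:upper} is $\nu\|\n\| = \Omega(1)$, and Lemma~\ref{L:upper} does not assume $k = \|\n\|^{o(1)}$ — so the floor-loses-at-most-$k$ shortcut is not clearly justified; the route through Claim~\ref{claim:n*} is the one that actually works.
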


We are ready to establish Lemma \ref{L:upper} based on the claims given above. 

\begin{proof}[Proof of Lemma \ref{L:upper}]
	We take the independent sets $U_1,\ldots, U_\ell$ provided by Claim \ref{claim:U} as our colour classes.  Note that  $\dfrac12 e^{\frac{1}{2}} \leq 1$  and $3\, e^{ \frac{1}{2}}  \leq 5$ so the condition~(ii)
	of  Claim \ref{claim:U}
	ensures that  this  colouring  
	covers  all but at most
	$
	n_i \left( \frac{1}{ \log^2 \|\n\|} + 5\eps^{-1} \frac{w(\n)}  {   n_*\log (w(\n))}\right)
	$
	vertices from each block $B_i$.    Claim \ref{claim:ell-bound} establishes the upper bound for the number of colours as desired.
\end{proof}

In the rest of this section we will prove first Claim \ref{claim:ell-bound} 
and then  Claim \ref{claim:U}.  To this end we  need the following lower  bounds on $	n_*^{(j)}$ {defined by 
$$   	n_*^{(j)}:= \min_{i\in [k]} n_i^{(j)}.$$}
\begin{claim} \label{claim:n*}
	Suppose there exists a sequence $(U_1,\ldots,U_\ell)$ of disjoint independent subsets in $\G$ such that condition (i)  of Claim \ref{claim:U} holds.  
	Then,  for all $j \in [\ell]$ such that $j \le  \dfrac{w(\n)}{\log (w(\n))}$, we have
	\[
	n_*^{(j)}  	\geq  \left(1-\dfrac\eps 3\right) n_* \frac{\|\n^{(j)}\|}{\|\n\|}. 
	\]
\end{claim}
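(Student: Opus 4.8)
The plan is to control the block ratios $n_i^{(j)}/\|\n^{(j)}\|$ directly along the recursion \eqref{def_nj} and show they decrease by at most a factor $1-\eps/3$. Writing $r_t := \nu\|\n\|/\|\n^{(t)}\|$, condition~(i) of Claim~\ref{claim:U} gives $\b(U_t) = \lfloor r_t\,\n^{(t)}\rfloor$, and since $\|\n^{(t)}\|\geq\theta\|\n\|$ forces
\[
r_t\ \leq\ \frac{\nu}{\theta}\ \leq\ 2(2-\eps)\,\frac{\log(w(\n))\,\log^2\|\n\|}{w(\n)}\ =\ \|\n\|^{-(1-\sigma)+o(1)}\ =\ o(1)
\]
(using $w(\n)\geq\|\n\|^{1-\sigma}$, $\theta\geq\tfrac1{2\log^2\|\n\|}$, and $w(\n)\leq\|\n\|^{1+o(1)}$, which follows from $n_*\gg w(\n)/\log\|\n\|$), I may assume $r_t\in[0,1)$. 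Then, componentwise, $n_i^{(t+1)} = n_i^{(t)}-\lfloor r_t n_i^{(t)}\rfloor$ satisfies $(1-r_t)n_i^{(t)}\leq n_i^{(t+1)}\leq(1-r_t)n_i^{(t)}+1$.

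The next step is to iterate this. With $P_j:=\prod_{t=1}^{j-1}(1-r_t)$ (so $P_1=1$), the lower bound gives $n_i^{(j)}\geq P_j n_i$, and an easy induction on $j$ (the accumulated rounding error $c_j$ obeys $c_{j+1}=(1-r_j)c_j+1\leq c_j+1$ with $c_1=0$) gives $n_i^{(j)}\leq P_j n_i + (j-1)$. Summing over $i\in[k]$ yields $\|\n^{(j)}\|\leq P_j\|\n\|+k(j-1)$, hence
\[
P_j\|\n\|\ \geq\ \|\n^{(j)}\|-kj\ \geq\ \theta\|\n\|-kj,
\]
again by condition~(i).

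The only quantitative point is that the total rounding loss $kj$ is negligible against $\theta\|\n\|$. Since every block has size at least $n_*$, we have $\|\n\|\geq kn_*$, so
\[
\theta\|\n\|\ \geq\ 3\eps^{-1}\frac{w(\n)}{n_*\log(w(\n))}\,\|\n\|\ \geq\ 3\eps^{-1}k\,\frac{w(\n)}{\log(w(\n))}\ \geq\ 3\eps^{-1}kj
\]
whenever $j\leq w(\n)/\log(w(\n))$; thus $kj\leq\tfrac\eps3\theta\|\n\|$, so $P_j\|\n\|\geq(1-\tfrac\eps3)\theta\|\n\|$ and $kj/(P_j\|\n\|)\leq(\eps/3)/(1-\eps/3)$. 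Combining everything,
\[
\frac{n_i^{(j)}}{\|\n^{(j)}\|}\ \geq\ \frac{P_j n_i}{P_j\|\n\|+kj}\ =\ \frac{n_i}{\|\n\|}\cdot\frac{1}{1+\frac{kj}{P_j\|\n\|}}\ \geq\ \Bigl(1-\frac\eps3\Bigr)\frac{n_i}{\|\n\|}\ \geq\ \Bigl(1-\frac\eps3\Bigr)\frac{n_*}{\|\n\|},
\]
and multiplying through by $\|\n^{(j)}\|$ and taking the minimum over $i\in[k]$ gives the claimed bound $n_*^{(j)}\geq(1-\eps/3)\,n_*\,\|\n^{(j)}\|/\|\n\|$. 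I do not expect a genuine obstacle here; the only thing requiring care is the bookkeeping of the $O(kj)$ rounding error, which is exactly what the hypothesis $j\leq w(\n)/\log(w(\n))$, together with $n_*=\|\n\|^{1+o(1)}$ and $k=\|\n\|^{o(1)}$, is there to absorb.
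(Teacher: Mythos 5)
Your proof is correct and proceeds in essentially the same spirit as the paper's: control how the block sizes shrink under the recursion from Claim~\ref{claim:U}(i) and show the accumulated rounding error is negligible because $j\le w(\n)/\log(w(\n))$. The bookkeeping differs slightly. The paper runs a direct induction on the minimum block size, proving $n_*^{(j)}\ge n_*\,\|\n^{(j)}\|/\|\n\|-j+1$, where the per-step rounding loss is a flat $1$ because \eqref{eq: xvec} already folds the factor $k$ into $1/x_*$ via $\|\xvec\|\ge k x_*$. You instead track every block via the product $P_j=\prod_{t<j}(1-r_t)$, obtaining $P_j n_i\le n_i^{(j)}\le P_j n_i+(j-1)$ and hence $\|\n^{(j)}\|\le P_j\|\n\|+k(j-1)$; the extra factor $k$ is then absorbed by the same elementary inequality $\|\n\|\ge k n_*$. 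So the two arguments are really using the identical fact at different places, and both end with the comparison $kj$ (resp.~$j$) $\le\tfrac{\eps}{3}\cdot(\text{the quantity controlled by }\theta)$. Your observation that $r_t\le\nu/\theta=o(1)$, ensuring $P_j>0$ and that the recursion is meaningful, is a point the paper addresses implicitly (it checks positivity of $n_*^{(i)}$ along the induction); both handle it adequately.
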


\begin{proof}[Proof of Claim  \ref{claim:n*}] 
	It is sufficient to prove that, for all $j\in [\ell]$ such that   $j \le  \dfrac{w(\n)}{\log (w(\n))}$,
	\begin{equation}\label{eq:nja}
	n_*^{(j)} \geq  n_* \frac{\|\n^{(j)}\|}{\|\n\|}  - j +1.
	\end{equation}
	Indeed, by  the condition {(i)}  of Claim \ref{claim:U}  we have $  \|\n^{(j)}\|\geq   \theta \|\n\|$. 
	From the definition of $\theta $,  we get that, for every $j\in [\ell]$,
	\begin{equation} \label{eq:nj}
	\|\n^{(j)}\|\geq  \theta \|\n\|  =        {\frac{\|\n\|}{ 2\log^2 \|\n\|}}+3\eps^{-1} \frac{ \|\n\| w(\n)}{n_*\log(w(\n))}.
	\end{equation}
	{Using  the trivial bound $ \|\n\| \ge n_*$, we immediately get   from \eqref{eq:nj}  that}
	\begin{equation}\label{eq:nj-lower}
	n_* \frac{\|\n^{(j)}\|}{\|\n\|} \geq  
	3 \eps^{-1} \frac{  w(\n)}{ \log(w(\n))}.
	\end{equation}
	Thus, if $j \leq     \dfrac{w(\n)}{\log (w(\n))}$ then Claim \ref{claim:n*} follows from  \eqref{eq:nja} {and \eqref{eq:nj-lower}}.
	
	We will prove  \eqref{eq:nja} by induction on $j$. 
	Clearly,  it is true for $j=1$ since $\n^{(1)} = \n$.   Suppose, we established the  claim for $j=i$  such that $i<\ell$.    By definition and using \eqref{eq: xvec} with 
	$\x =\n^{(i)}$ and   $s=\frac{\nu \|\n\|}{ \|\n^{(i)}\|}$,  we get that 
	\begin{equation}\label{eq:ni-ratio-upper}
	\frac{\|\n^{(i+1)}\|}{\|\n^{(i)}\|} \leq 1 - \frac{\nu \|\n\|}{ \|\n^{(i)}\|} + \frac{1}{n_*^{(i)}}. 
	\end{equation}
	{Combining the induction hypothesis, the bound of \eqref{eq:ni-ratio-upper}, and   $\frac{\|\n^{(i+1)}\|}{\|\n^{(i)}\|} \leq 1$,}
  we find that
	\begin{align*}
	n_*^{(i+1)} &\geq  \left(1 - \frac{\nu \|\n\|}{ \|\n^{(i)}\|} \right) n_*^{(i)}
	\geq  \left(\frac{\|\n^{(i+1)}\|}{\|\n^{(i)}\|} - \frac{1}{n_*^{(i)}} \right) n_*^{(i)}
	\\ 
	&\geq  n_* \frac{\|\n^{(i+1)}\|}{\|\n\|}  - (i -1) \frac{\|\n^{(i+1)}\|}{\|\n^{(i)}\|}  -1
	\geq n_* \frac{\|\n^{(i+1)}\|}{\|\n\|} - i.
	\end{align*}
	Note also that the induction hypothesis and  \eqref{eq:nj-lower} imply that
	$n_*^{(i)}$ is positive, since it is  at least $ \left(1-\dfrac\eps 3\right) n_* \frac{\|\n^{(i)}\|}{\|\n\|}$. 
	Thus, the claim is true for $j=i+1$ and, by induction, for all $j \in [\ell]$ such that
	$j \le  \dfrac{w(\n)}{\log (w(\n))}$.  
\end{proof}

\begin{proof}[Proof of Claim  \ref{claim:ell-bound}] 
	Assume otherwise that $\ell > \left(1 + \eps\right) \dfrac{w(\n)}{2\log (w(\n))}$. 
	By definition  \eqref{def:b(U)} we have $|U_j| = \|\b(U_j)\| $ and by Claim \ref{claim:U}{(i)}   and \eqref{eq: xvec} with $\xvec =\n^{(j)}$ and $s=\frac{\nu \|\n\|}{ \|\n^{(j)}\|}$ we have 
	\begin{equation*} 
	\|\b(U_j)\| \geq 
	\left( \frac{\nu \|\n\|}{ \|\n^{(j)}\|}  - \frac{1}{ n_*^{(j)}}\right)  \|\n^{(j)}\|.
	\end{equation*}
	Note further that, by definition of $\nu$ and the assumptions $ n_* \gg \frac{w(\n)}{\log \|\n\|}$, 
	$w(\n) \geq \|\n\|^{1-\sigma}$, we have $\nu \gg 1/n_*$. 
	Then, using Claim  \ref{claim:n*}, we get  that 
	$\frac{\nu \|\n\|}{ \|\n^{(j)}\|}   \gg  \frac{1}{ n_*^{(j)}}$
	for all $j\in [\ell]$    such that
	$j \le  \dfrac{w(\n)}{\log (w(\n))}$. Therefore,
	\begin{equation} \label{eq:|U_j|}
	|U_j| = \|\b(U_j)\| \geq 
	\left( \frac{\nu \|\n\|}{ \|\n^{(j)}\|}  - \frac{1}{ n_*^{(j)}}\right)  \|\n^{(j)}\| 
	=  (1-o(1)) \nu \|\n\|.
	\end{equation}  
	Using \eqref{eq:|U_j|} and our assumption that  $\ell > \left(1 + \eps\right) \dfrac{w(\n)}{2\log (w(\n))}$, we get
	\begin{align*}
	\|\n\|\geq \sum_{j \in [\ell]} |U_j| 
	& \geq    (1-o(1))  \nu \|\n\|   \left(1 +\eps\right) \frac{ w(\n)}{ 2\log (w(\n))}
	\\ &=(1-o(1))(2 - \eps) \left(1 +\eps\right) \frac{\|\n\|}{2}> \|\n\|.
	\end{align*}
{The last inequality is true for any fixed $\eps \in (0,1)$ when $o(1)$ gets sufficiently small.}  
	This contradiction proves   Claim  \ref{claim:ell-bound}.
\end{proof}

\begin{proof}[Proof of Claim  \ref{claim:U}]
	In order to show the existence  of  such a sequence  
	$(U_1,\ldots, U_\ell)$ of  disjoint    independent sets in $\G$, we repeatedly apply Lemma~\ref{L:janson}.        
	Suppose we already constructed sets $U_1, \ldots, U_{j-1}$. 
	We will show that if  $\|\n^{(j )}\| \geq \theta \|\n\|$ then, with sufficiently high probability, we can  find  another independent set $U_{j }$ in the induced subgraph of $\G$ on remaining  vertices,  
	which  satisfies condition (i) of  Claim  \ref{claim:U}. 
	By Claim \ref{claim:ell-bound}, we get that 
	\begin{equation}\label{jj_bound}
	j  \leq  \left(1 +\eps \right) \frac{w(\n)}{ 2\log (w(\n))} \leq  \frac{w(\n)}{ \log (w(\n))}. 
	\end{equation}
	For all $i< j$,
	using \eqref{eq: xvec} with 
	$\x =\n^{(i)}$ and   ${s}=\dfrac{\nu \|\n\|}{ \|\n^{(i)}\|}$,   we find that
	\begin{equation}\label{eq:ni-ratio}
	\frac{\|\n^{(i+1)}\|}{\|\n^{(i)}\|} \geq 1 - \frac{\nu \|\n\|}{ \|\n^{(i)}\|}. 
	\end{equation}
	From \eqref{eq: xvec2} (with the same $\x$ and $s$), we obtain
	\begin{equation}\label{eq:wj}
	w(\n^{(i+1)}) \leq \left(1 - \frac{\nu \|\n\|}{ \|\n^{(i)}\|} + \frac{1}{n_*^{(i)}} \right)  w(\n^{(i)}).
	\end{equation}
{Due to \eqref{jj_bound}, we can apply  Claim \ref{claim:n*} to obtain   $$n_*^{(i)}  	\geq  \left(1-\dfrac\eps 3\right) n_* \frac{\|\n^{(i)}\|}{\|\n\|} \geq  \frac{3-\eps}{\eps} \frac {w(\n)}{ \log(w(\n))}\frac{\|\n^{(i)}\|}{\|\n^{(i+1)}\|},$$ 
		where the last inequality follows from  \eqref{eq:nj-lower}  by taking $j=i+1$ which gives 
		\[ \frac{n_*}{\|\n\|} \geq  \frac{3}{\eps} \frac{w(\n)}{ \log(w(\n))} \frac{1}{\|\n^{(i+1)}\|}.\]}
	Then, combining this with \eqref{eq:ni-ratio}, we have
	\[
	1 - \frac{\nu \|\n\|}{ \|\n^{(i)}\|} + \frac{1}{n_*^{(i)}}   
	\leq     \frac{\|\n^{(i+1)}\|}{\|\n^{(i)}\|} \left( 1+ \frac{\eps \log(w(\n))}{(3-\eps)w(\n)}\right),
	\]
	which in  \eqref{eq:wj} implies   
	\begin{equation}\label{eq:upperw}
	\frac{w(\n^{(i+1)})}{w(\n^{(i)})} \leq  \frac{\|\n^{(i+1)}\|}{\|\n^{(i)}\|} \left( 1+ \frac{\eps \log(w(\n))}{(3-\eps)w(\n)}\right).
	\end{equation}
	Multiplying \eqref{eq:upperw} together for $i =1,\ldots, j-1$, we obtain
	\begin{align}\label{eq:upperwj}
	\frac{w(\n^{(j )})}{w(\n)} &= {\prod_{i=1}^{j-1} 	\frac{w(\n^{(i+1)})}{w(\n^{(i)})}}
	\leq  \frac{\|\n^{(j )}\|}{\|\n\|} \left( 1+ \frac{\eps \log(w(\n))}{(3-\eps)w(\n)}\right)^{j-1 }\nonumber\\  
	&\leq \frac{ \|\n^{(j )}\|}{\|\n\|} 
	\exp\left( \frac{\eps( 1 + \eps)}{2(3-\eps)}\right)
	\leq \frac{ \|\n^{(j )}\|}{\|\n\|} e^{\frac{\eps}{2}}, 
	\end{align}
	{where the penultimate inequality is due to $1+x\le e^x$ and the first inequality in \eqref{jj_bound}.} 
	Similarly to \eqref{eq:upperw} and \eqref{eq:upperwj}, we get
	\[
	\frac{w(\n^{(i+1)})}{w(\n^{(i)}) } \geq \left( \frac{\|\n^{(i+1)}\|}{\|\n^{(i)}\|}  - \frac{1}{n_*^{(i)}} \right)  \geq    \frac{\|\n^{(i+1)}\|}{\|\n^{(i)}\|} \left( 1 - \frac{\eps \log(w(\n))}{(3-\eps)w(\n)}\right), 
	\]
	which leads to the bound
	\begin{equation}\label{eq:lowerw}
	\frac{w(\n^{(j )})}{w(\n) } \geq    \frac{\|\n^{(j )}\|}{\|\n\|}e^{- \frac{\eps}{2}}. 
	\end{equation}
	Thus, we obtain from \eqref{eq:upperwj} and \eqref{eq:lowerw} that
	\begin{equation}\label{eq:upperlowerw}
	\frac{\|\n^{(j )}\|}{\|\n\|}e^{- \frac{\eps}{2}} \leq \frac{w(\n^{(j )})}{w(\n) } \leq   \frac{ \|\n^{(j )}\|}{\|\n\|} e^{\frac{\eps}{2}}. 
	\end{equation}
	From  \eqref{eq:nj},   we have $\dfrac{\|\n^{(j )}\|}{\|\n\|}\geq \theta \geq \dfrac{1}{2\log^2\|\n\|}$ and obviously $ \dfrac{\|\n^{(j )}\|}{\|\n\|}\leq 1$.
	Using the assumption  $n_* = \|\n\|^{1+o(1)}$,	
	we get that
	\begin{equation}\label{eq:asympw}
	\log(\|\n\|) =	 (1+o(1))\log \|\n^{(j)}\| .
	\end{equation}
	By Claim \ref{claim:n*} and the assumption $ n_* \gg \frac{w(\n)}{\log \|\n\|}$  	 
	we find that  
	$$n_*^{(j)}  	\geq  \left(1-\dfrac\eps 3\right) n_* \frac{\|\n^{(j)}\|}{\|\n\|} 	\gg  \frac{\|\n^{(j )}\|}{\|\n\|}\cdot  \frac{ w(\n)}{\log \|\n\|)} 
	\geq e^{-\frac{\eps}{2}}   \frac{ {w(\n^{(j )})}}{\log \|\n\|},
	$$
	where the last inequality follows from \eqref{eq:upperwj}.         
	Combining this with \eqref{eq:asympw}, we get
	\[
	n_*^{(j)} 
	\gg  \frac{ w(\n^{(j)})}{ \log \|\n^{(j )}\|}. 
	\]
	Combining, Claim \ref{claim:n*},  the assumption $n_* = \|\n\|^{1+o(1)}$,  
	and \eqref{eq:asympw}, we find  that
	\[
	1\geq 	\frac{ 	n_*^{(j)} }
	{\| 	
		\n^{(j)} \|} 
	\geq \left(1-\dfrac\eps 3\right)  \frac{n_*}{\|\n\|} =  \|\n\|^{o(1)} = \|\n^{(j)}\|^{o(1)}.
	\]
	Furthermore,  the assumption $w(\n) \geq \|\n\|^{1-\sigma}$   together with {\eqref{eq:upperlowerw} and}   \eqref{eq:asympw} implies
	\[
	w(\n^{(j)} ) \geq  w(\n) \frac{\|\n^{(j)}\|}{\|\n\|}e^{- \frac{\eps}{2}} \geq   e^{- \frac{\eps}{2}} \|\n\|^{1-\sigma - o(1)} =    \|\n^{(j)}\|^{1-\sigma - o(1)}.
	\]
	Thus, all assumptions of Lemma \ref{L:janson} hold for the random graph $\G^{(j)} \sim \mathcal G(\n^{(j)},P)$.

	Applying Lemma \ref{L:janson} to $\G^{(j)} \sim \mathcal G(\n^{(j)},P)$ we show the existence of an   independent set 
	$U'\subset V(\G^{(j)})$  in $\G^{(j)}$ with probability 
	at least 
	$1 - \exp\left(-\|\n\|^{2-4\sigma +o(1)}\right)$
	such that 
	$
	\b(U') = \lfloor\nu' \n\rfloor 
	$,
	where 
	\[
	\nu' = (2+o(1)) \frac{\log \left(w(\n^{(j)}) \right)}{w(\n^{(j)})}.
	\]
	Moreover,  since  $2-4\sigma >1$, the probability  that there exists 
	$W \subseteq V(\G)$ such that $\b(W) = \n^{(j)}$ and $\G[W]$ does not contain  such an
	independent set $U'$  is at most
	\[
	\binom{\|\n\|}{\|\n^{(j)}\|}  \exp\left(-\|\n\|^{2-4\sigma +o(1)}\right)
	= \exp\left(-\|\n\|^{2-4\sigma +o(1)}\right).
	\] 
	In particular, we get that the graph obtained from $\G$ by removing $U_1,\ldots, U_{j-1}$ 
	contains such $U'$ with probability at least  $ 1 - \exp\left(-\|\n\|^{2-4\sigma +o(1)}\right).$

	Next, we show that it is  possible to  find   $U_j \subseteq U'$  such that $\b(U_j) = \left\lfloor \dfrac{\nu \|\n\|}{ \|\n^{(j)}\|} \n^{(j)} \right\rfloor$.  To do this, it is sufficient to show that 
	$\nu' \geq  \dfrac{\nu \|\n\|}{ \|\n^{(j)}\|}$.  
	Using \eqref{eq:upperlowerw}, \eqref{eq:asympw}, and the assumption $w(\n) \geq \|\n\|^{1-\sigma}$, we find that
	\[
	\log \left(w(\n^{(j)}) \right)= (1+o(1))  \log \left(w(\n) \right).
	\]
	Observe that $g(\eps):= (2-\eps)e^{\eps/2}$ is decreasing on $\pReals$, so $g(\eps)<g(0)=2$.  Therefore,  using  \eqref{eq:upperlowerw}  and the first inequality in  \eqref{eq: xvec2}, we get that 
	\[
	\nu' \geq  (2-\eps)e^{\eps/2} \frac{\log \left(w(\n) \right)}{w(\n^{(j)})}
	\geq  (2-\eps )\frac{ \|\n\|\log (w(\n))}{ \|\n^{(j)}\| w(\n)} = \frac{\nu \|\n\|}{ \|\n^{(j)}\|}.
	\]

	The probability that there exists the required  sequence $(U_1,\ldots, U_{\ell})$
	can be estimated as follows.  Using  Claim \ref{claim:ell-bound}   and applying the union bound
	for the event that there is no suitable choice for 
{$U_{j+1}$} after removing $U_1,\ldots, U_j$ from $\G$,  we get that  
	\begin{align*}
	\sum_{j \in [\ell]}  \exp\left(-\|\n\|^{2-4\sigma +o(1)}\right) 
	&\leq    \left(\frac12 + \eps\right) \dfrac{w(\n)}{\log (w(\n))}  \exp\left(-\|\n\|^{2-4\sigma +o(1)}\right) 
	\\ &=  \exp\left(-\|\n\|^{2-4\sigma +o(1)}\right). 
	\end{align*}
	To derive the last inequality, we  use the assumptions to estimate 
	$\dfrac{w(\n)}{\log (w(\n))} \ll n_* \leq \|\n\| $  and recall that $2-4\sigma>1$.
	
	The construction of  the sequence $(U_1,\ldots, U_{\ell})$ is terminated when 
	$\|\n^{(\ell+1)}\|< \theta \|\n\|$. 
	Note that, for any $i\in[k]$,
	\[
	n_i^{(j+1)} \leq n_i^{(j)}- \frac{\nu \|\n\|}{\|\n^{(j)}\|} n_i^{(j)} +1 
	\leq \left(1 - \frac{\nu \|\n\|}{\|\n^{(j)}\|}  + \frac{1}{n_*^{(j)}} \right)  n_i^{(j)}.
	\]
	Repeating the arguments of \eqref{eq:upperw} and \eqref{eq:upperwj}, we find that 
	\[
	n_i^{(\ell+1)} \leq e^{\frac{\eps}{2}} \frac{\|\n^{(\ell+1)}\|}{\|\n\|}  n_i
	\leq   \theta  e^{\frac{1}{2}}n_i.
	\]
	Thus,  condition (ii) of  Claim  \ref{claim:U} is satisfied. This completes the proof of  Claim  \ref{claim:U}.
\end{proof}

\subsection{{Final ingredient for} colouring completion}
{In this section, we combine Lemma \ref{L:upper} and Theorem \ref{T:uppercrude}  to estimate the chromatic number of $\G \sim \SBM$ under the additional condition that $w_*(\n)$ is asymptotically equal to $w(\n)$.  
In the general case of Theorem \ref{T:upper}, this additional condition will be satisfied by each part of the random graph $\G$ corresponding to a near-optimal integer system given by Theorem \ref{l:Qmatrix}(g); see Section \ref{S:upper}.}

\begin{lemma}\label{L:upper2}
		Let $\G \sim \SBM$,  where $P = (p_{ij})_{i,j\in [k]}$ is such that $p_{ij}=p_{ji}$ and $0\leq p_{ij}<1$ for all $i,j \in [k]$. Let $\sigma \in [0,\sigma_0]$  for some fixed $0<\sigma_0<\frac14$.  
	Assume   that, as $\|\n\| \rightarrow \infty$:
	\[
	k   = \|\n\|^{o(1)}, \qquad     \log\|\n\|  \gg k q^*   \geq  {\|\n\|^{-\sigma}.}
	\]
	Assume also that 
	\[
	w_*(\n)  = (1+o(1)) w(\n)  \geq ( q^*\|\n\|)^{1+o(1)}.
	\]
	Then,    for any fixed $\eps\in (0,1)$,
	\[
	\Pr\left( \chi(\G) >  \left(1 +\eps \right) \frac{w(\n)}{ 2\log (q^*\|\n\|)}
	+     20 \eps^{-2}   \frac{k \hat q(\n) q^* \|\n\|  }{\log^2 (q^*\|\n\|)}
	\right)  \leq \exp \left(- \|\n\|^{2-4\sigma +o(1)}\right).
	\]
\end{lemma}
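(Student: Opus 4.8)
The plan is to produce the colouring in two stages: first cover almost all of $\G$ by the "balanced'' independent sets supplied by Lemma~\ref{L:upper}, then handle the leftover vertices with the crude block‑by‑block colouring of Theorem~\ref{T:uppercrude}.

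First I would record the consequences of the hypotheses that make both tools applicable. By Theorem~\ref{l:Qmatrix}(d), $w_*(\n)\le\hat q(\n)\|\n\|\le q^*\|\n\|$, so $w(\n)=(1+o(1))w_*(\n)\ge(q^*\|\n\|)^{1+o(1)}$ forces $\log w(\n)=(1+o(1))\log(q^*\|\n\|)=\Theta(\log\|\n\|)$; combined with the lower bound $w_*(\n)\ge\hat q(\n)^2\|\n\|/(kq^*)$ of Theorem~\ref{l:Qmatrix}(d) and with $\|\n\|^{-\sigma+o(1)}\le q^*\le\log\|\n\|$ (the latter from $\|\n\|^{-\sigma}\le kq^*\ll\log\|\n\|$, $k=\|\n\|^{o(1)}$), it also gives $\hat q(\n)=q^*\|\n\|^{o(1)}$. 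Since $\hat q(\cdot)$ is scale invariant, $\hat q(c\,\n)=\hat q(\n)=q^*\|\n\|^{o(1)}$ for every $c>0$, which is exactly the input $\hat q(\uvec)=q^*n^{o(1)}$ needed in Theorem~\ref{T:uppercrude} with $n:=\|\n\|$. Likewise $w(\n)\ge\|\n\|^{1-\sigma+o(1)}$, and in the regime where the lemma is invoked $n_*=\|\n\|^{1+o(1)}$ with $n_*\gg w(\n)/\log\|\n\|$, so the hypotheses of Lemma~\ref{L:upper} hold for $\G$ (and, by scaling, for the leftover graphs below, after increasing $\sigma$ by an $o(1)$ amount to absorb the factor $k$).

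Next I would apply Lemma~\ref{L:upper} with parameter $\eps/4$: with probability $1-\exp(-\|\n\|^{2-4\sigma+o(1)})$ it colours $\G$ with at most $(1+\tfrac\eps4)\tfrac{w(\n)}{2\log w(\n)}\le(1+\tfrac\eps3)\tfrac{w(\n)}{2\log(q^*\|\n\|)}$ colours, leaving uncovered a set $U$ with $\b(U)\preceq c\,\n$, where $c=\tfrac1{\log^2\|\n\|}+O(\eps^{-1})\tfrac{w(\n)}{n_*\log w(\n)}=o(1)$. This shrink factor is scale invariant, so the step can be iterated on the successive leftover graphs $\G[U],\dots$ for a carefully chosen number $J=J(\|\n\|)$ of rounds; since $c=o(1)$ the colour counts of the later rounds form a geometric series summing to $o\!\big(\tfrac{w(\n)}{\log(q^*\|\n\|)}\big)$, so the cumulative total stays below $(1+\tfrac\eps2)\tfrac{w(\n)}{2\log(q^*\|\n\|)}$, while the final leftover set $U^{(J)}$ satisfies $\b(U^{(J)})\preceq\gamma\,\n$ with $\gamma$ shrinking as $J$ grows. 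A union bound over the rounds keeps the failure probability at $\exp(-\|\n\|^{2-4\sigma+o(1)})$ as long as each round runs on a graph of size $\|\n\|^{1+o(1)}$ (which limits $J$), and when one must go further one invokes Lemma~\ref{L:janson} directly on the smaller leftover graphs (it tolerates exponents up to $1/2$). Finally I would colour $U^{(J)}$ via Theorem~\ref{T:uppercrude} with $n:=\|\n\|$ and $\uvec:=\gamma\,\n$, adding at most $(1+\eps)\tfrac{\hat q(\uvec)\|\uvec\|}{2\log(q^*\|\n\|)}=(1+\eps)\gamma\tfrac{\hat q(\n)\|\n\|}{2\log(q^*\|\n\|)}$ colours (failure probability $\exp(-\|\n\|^{2-4\sigma+o(1)})$); choosing $\gamma\le 40\eps^{-1}kq^*/\log(q^*\|\n\|)$ — feasible because $kq^*\ge\|\n\|^{-\sigma}$ only requires $\log(1/\gamma)\lesssim\sigma\log\|\n\|$ — makes this at most the correction term $20\eps^{-2}\tfrac{k\hat q(\n)q^*\|\n\|}{\log^2(q^*\|\n\|)}$. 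Summing the two stages and taking a final union bound over the bad events gives the statement.

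The main obstacle is precisely this calibration in the middle step: one must peel off enough balanced independent sets that the leftover fraction $\gamma$ drops below $\approx kq^*/\log(q^*\|\n\|)$ (so Theorem~\ref{T:uppercrude} lands inside the correction term), while keeping the cumulative colour count within $(1+\eps)\tfrac{w(\n)}{2\log(q^*\|\n\|)}$ — delicate because the efficiency $\log w(\n^{(j)})$ of the independent sets degrades in later rounds — and simultaneously keeping the union bound over rounds at probability $\exp(-\|\n\|^{2-4\sigma+o(1)})$, which limits how small the graphs fed to Lemma~\ref{L:upper}/\ref{L:janson} may become. Reconciling these three constraints is where the restriction $\sigma<\tfrac14$ (so that $2-4\sigma>1$ and $1-2\sigma>\tfrac12$) and the slack built into the definitions of $c$ and $\gamma$ are used.
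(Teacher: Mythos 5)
Your proposal misses a step that the paper relies on, and the gap is not cosmetic. You write that "in the regime where the lemma is invoked $n_*=\|\n\|^{1+o(1)}$ with $n_*\gg w(\n)/\log\|\n\|$, so the hypotheses of Lemma~\ref{L:upper} hold for $\G$." But Lemma~\ref{L:upper2} makes \emph{no} assumption on $n_*:=\min_i n_i$: under the stated hypotheses some blocks can be tiny or even empty, and indeed in the proof of Theorem~\ref{T:upper} the paper only guarantees $\|\n^{(t)}\|=\|\n\|^{1+o(1)}$ for the vectors fed into this lemma, not a lower bound on their smallest component. So Lemma~\ref{L:upper} simply does not apply directly to $\G$, and your first stage cannot run.

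The paper's fix — which you would need — is to first set $n_0:=\frac{\eps\, w(\n)}{2kq^*}$, replace $\n$ by $\tilde\n$ obtained by zeroing out every block with $n_i<n_0$, check (using Theorem~\ref{l:Qmatrix}(d,e) to bound $w_*(\n-\tilde\n)\le q^* k n_0=\frac{\eps}{2}w(\n)$, so $w(\tilde\n)\geq(1-\eps/2+o(1))w(\n)$) that $\widetilde\G:=\G[U_{\text{big}}]\sim\mathcal G(\tilde\n,P)$ does satisfy the hypotheses of Lemma~\ref{L:upper} with $n_*\geq n_0=\|\n\|^{1+o(1)}$, apply Lemma~\ref{L:upper} once to $\widetilde\G$, and only then invoke Theorem~\ref{T:uppercrude} on $\uvec$ whose entries are $n_0$ plus the per-block leftover; the $kq^* n_0=\frac\eps2 w(\n)$ term coming from the small blocks is what necessitates the $n_0$-scale. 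Your proposed middle iteration (repeating Lemma~\ref{L:upper} $J$ times on shrinking leftovers) is both unnecessary — a single application already shrinks the leftover to $n_i\big(\frac{1}{\log^2\|\n\|}+O(\eps^{-1})\frac{w(\n)}{n_0\log w(\n)}\big)$, which after plugging into Theorem~\ref{T:uppercrude} yields exactly the $20\eps^{-2}\frac{k\hat q(\n)q^*\|\n\|}{\log^2(q^*\|\n\|)}$ correction — and problematic, because on a leftover scaled by $c=o(1)$ the required inequality $w(c\n)\geq\|c\n\|^{1-\sigma}$ degrades to $w(\n)\geq c^{-\sigma}\|\n\|^{1-\sigma}$ and eventually fails, a tension your proposal flags but does not resolve.
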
    
\begin{proof}
	Let 
	\[
	n_0:= \dfrac{\eps\, w(\n)}{2 k q^*}. 
	\]
	Consider   the  vector $\tilde{\n} = (\tilde n_1,\ldots, {\tilde n_k})^T \in \Naturals^k$ defined by
	\[
	\tilde n_i:= 
	\begin{cases}
	n_i, &\text{if } n_i \geq  n_0,\\
	0, &\text{otherwise.}
	\end{cases}
	\]
	Let $U_{\text{big}}$ be the union of blocks $B_i$ for which $n_i \geq n_0$.
	We will apply  Lemma \ref{L:upper} for the induced subgraph 
	$\widetilde{\G}:= \G[U_{\text{big}}]  \sim \mathcal G(\tilde \n, P)$ (ignoring zero components of $\tilde \n$).
	Then, we will use Theorem~\ref{T:uppercrude} to colour the rest of the vertices of $\G$.

	First, we check that  $\widetilde{\G}$ satisfies the  assumptions of   Lemma \ref{L:upper}.
	From {the triangle inequality in Theorem \ref{l:Qmatrix}(e)}  and the assumptions, we find that  
	\[
	w(\tilde \n) \geq 
	w_*(\tilde \n)  \geq w_*({\n}) -  w_*(\n-\tilde{\n})
	\geq  (1+o(1)) w(\n) - w_*(\n-\tilde{\n}).
	\]
	Using {the upper bound of Theorem} \ref{l:Qmatrix}(d) and by the definitions of $\tilde{\n}, n_0$, we get  
	\[
	w_*(\n-\tilde{\n}) \leq  \sum_{i\in [k]}  n_0 q_{ii}  
	{=\sum_{i\in [k]}  \dfrac{\eps\, w(\n)}{2 k q^*} q_{ii}    } \leq  \dfrac\eps  2  w(\n).
	\]
	Therefore, by the assumptions
	\begin{equation}\label{eq:wtilde}
	w(\tilde \n)  \geq \left(1 -   \frac\eps 2 +o(1)\right) w(\n) \geq (q^* \|\n\|)^{1+o(1)}\geq \|\n\|^{ 1-\sigma +o(1)}
	\geq  \|\tilde\n\|^{ 1-\sigma +o(1)}.
	\end{equation}
	{Using our assumption that $w(\n) = (1+o(1)) w_*(\n)$ and   Theorem \ref{l:Qmatrix}(d) again, we get 
	\begin{equation}\label{eq:n-ntilde}
	\|\n - \tilde \n\| \leq  
	kn_0  =  \dfrac{\eps\, w(\n)}{2 q^*} 
 =   (1+o(1))\dfrac{\eps\, w_*(\n)}{2 q^*}
	\leq (1+o(1))\dfrac{\eps}{2} \|\n\|.
	\end{equation}}
	In particular, we get $\|\tilde \n\| = \|\n\|^{1+o(1)}$. 
	By the definition of $\tilde{\n}$, all non-zero components of $\tilde{\n}$ are at least $n_0$.  
	Using  {the bounds of  Theorem} \ref{l:Qmatrix}(d)  and the assumptions, we have that 
	\[q^*\|\n\|\geq  
 	w_*(\n)  \geq  (1+o(1))w(\n) \geq  (q^* \|\n\|)^{1+o(1)}.
	\] 
	Thus,  $\frac{w(\n)}{q^*\|\n\|} = \|\n\|^{o(1)}$.
	Recalling also  $k=\|\n\|^{o(1)}$, we find that 
	\begin{equation}\label{eq:n0}
	n_0 ={\dfrac{\eps\, w(\n)}{2 k q^*} = } \dfrac{\eps}{2k}\cdot \dfrac{ w(\n)}{q^*\|\n\|} \cdot  \|\n\| = \|\n\|^{1+o(1)} =  \|\tilde \n\|^{1+o(1)} 
	\end{equation}
	and, since  $kq^* \ll \log \|\n\|$ and $\tilde{\n} \preceq \n$, 
	\[
	n_0 \gg  \dfrac{ w(\n)}{\log \|\n\|} \geq 
	 \dfrac{ w(\tilde \n)}{\log \|\n\|}   = \dfrac{w(\tilde\n)}{ (1+o(1))\log \|\tilde{\n}\|}.
	\]
	Thus, all  assumptions of   Lemma \ref{L:upper} for $\widetilde{\G} \sim \mathcal G(\tilde{\n}, P)$ hold. 
	
	Applying Lemma \ref{L:upper} with $\tilde\eps := \dfrac\eps 3$,
{	we show that,
	with probability
	at least 
	\[
	1 - \exp\left(-\|\tilde  \n\|^{ 2-4\sigma +o(1)}\right) =
	1 - \exp\left(-\| \n\|^{  2-4\sigma +o(1)}\right),
	\]  
	there is a colouring of  $\widetilde{\G} $  with at most   
	\[
	\left(1 + \tilde\eps\right) \dfrac{w(\tilde \n)}{2\log (w(\tilde \n))}
	\leq  \left(1+ \tilde\eps +o(1) \right) \dfrac{w(\n)}{2\log (q^*\|\n\|)}
	\]
	colours 
	covering  all vertices from each block $B_i$ that $n_i \geq n_0$ except at most 
	\[ 
	n_i \left( \dfrac{1}{ \log^2 \|\tilde \n\|} + 5(\tilde\eps)^{-1} \dfrac{w(\tilde \n)}  {   n_0\log (w(\tilde \n))}\right)
	\] 
	vertices.}
	
	Using \eqref{eq:wtilde} and  recalling $\sigma<\dfrac14$, we find that 
	$w(\tilde \n)  = (w(\n))^{1+o(1)} = (q^*\|\n\|)^{1+o(1)}$.
	Using also  the assumption  $k q^* \ll  \log\|\n\| $, we conclude that the set of remaining uncoloured vertices (with sufficiently high probability) has at most 
	\begin{align*}
	u_i &:= n_0 + n_i \left( \frac{1}{ \log^2 \|\tilde \n\|} + 5(\tilde\eps)^{-1} \frac{w(\tilde \n)}  {   n_0\log (w(\tilde \n))}\right)
	\\
	& = n_0 +  (1+o(1)) \frac{ 30 kq^* n_i  }  {\eps^2  \log (q^*\|\n\|)} 
	\end{align*}
	vertices in each block $B_i$.  Since $n_0 = \|\n\|^{1+o(1)} $   by \eqref{eq:n0} and $k = \|\n\|^{o(1)}$ by our assumptions, we find that $u_i = \|\n\|^{1+o(1)}$
	{and $\|\u\| = \|\n\|^{1+o(1)}$, where $\uvec = (u_1,\ldots,u_k)^T$.  Then,   we get that
	\[
      q^*  \geq \hat q(\uvec)  :=\frac{\sum_{i\in [k]} u_i q_{ii}}{\|\uvec\|} \geq \frac{q^* n_0}{\|\uvec\|}  = q^* \|\n\|^{o(1)}.
	\]}
	{By the definitions of $u_i$ and $n_0$}, we observe  that 
	\begin{align*}
	\hat q(\uvec)\|\uvec\|   &\leq  kq^* n_0  + (1+o(1)) \frac{ 30 kq^*  \|\n\| \hat q(\n)  }  {\eps^2    \log (q^*\|\n\|)} 
	\\
	&= \frac{\eps}{2} w(\n)+  (1+o(1))   \frac{ 30 kq^* \|\n\| \hat q(\n)  }  {\eps^2    \log (q^*\|\n\|)}. 
	\end{align*}
	Using Theorem \ref{T:uppercrude} with $n:=\|\n\|$ with any $\eps'< \dfrac13$, we can colour the remaining vertices 
	using at most 
	\[
	\left(1+ \eps' \right) \frac{\hat q(\uvec)\|\uvec\| }{2\log (q^*\|\n\|)} 
	\leq    \frac{(1+\eps')\eps}{4} \cdot \frac{  w(\n)  }{ \log  (q^* \|\n\|)}
	+   20 \eps^{-2}  \frac{kq^*  \|\n\| \hat q(\n) }  { \log  (q^* \|\n\|)}
	\]
	colours with probability at least 
	$
	1 - \exp\left(-\| \n\|^{2-4\sigma +o(1)}\right).
	$  
	Thus, the total number of colours is  at most 
	\begin{align*}
	\left(1+  \tilde{\eps} + \frac{(1+\eps')\eps}{2}+o(1)  \right)  \frac{w(\n)}{2\log (q^*\|\n\|)}
	+   20 \eps^{-2}  \frac{kq^*  \|\n\| \hat q(\n) }  { \log  (q^* \|\n\|)}.
	\end{align*} 
	The claimed bound on $\chi(\G)$ follows since $\tilde{\eps} = \eps/3$ and $\frac{(1+\eps')\eps}{2}<2\eps/3$.
\end{proof}


\subsection{Upper tail bound: proof of Theorem \ref{T:upper}}\label{S:upper}

{By the near-optimal integer system property given in  Theorem \ref{l:Qmatrix}(g), we can find $k$ vectors  $(\n^{(t)})_{t\in [k]}$ from $\Naturals^k$
such that 
\begin{equation}\label{nt-def}
\n =  \sum_{t \in [k]}\n^{(t)} \qquad \text{and} \qquad 	 \sum_{t \in [k]} w(\n^{(t)}) \leq w_*(\n)+ k^2 q^*. 
\end{equation}
We treat our graph $\G$ as the union of the vertex disjoint random graphs $\G^{(t)}\sim \mathcal G(\n^{(t)}, P)$, for $t\in [k]$.
Since we can colour them with different colours, we have that, with probability $1$,
\begin{equation}\label{chi-inequality}
\chi(\G) \leq  \sum_{t\in [k]} \chi(\G^{(t)}).
\end{equation}
Let  
\[
T_{\text{small}} =\left\{t\in [k] :  w(\n^{(t)}) < \frac{w_*(\n)}{k^2 \log \|\n\|} \right\}.
\] 
The proof of Theorem \ref{T:upper} consists of two parts. First, applying Theorem    \ref{T:uppercrude}, we show that, with sufficiently high probability,    
$\sum_{t\in T_{\rm small}} \chi(\G^{(t)})  \ll \frac{w_*(\n)}{\log (q^*\|\n\|)} $. Second, we use  Lemma \ref{L:upper2} to estimate  $ \chi(\G^{(t)})$ for $t \notin T_{\rm small}$.}

{Before proceeding, we derive some preliminary bounds implied by our assumptions. Since 
	$k   = \|\n\|^{o(1)}$ and
	$\hat q(\n), q^* =  \|\n\|^{-\sigma+o(1)}$,  
	we find that
	\[
	\frac{ (\hat q(\n) )^2  }{k   q^* } 
	=
	\|\n\|^{-\sigma+o(1)}
	=
	\|\n\|^{o(1)} kq^*.
	\]}
Then, using {the bounds of Theorem}~\ref{l:Qmatrix}(d), we get
\begin{equation}\label{eq:mubounds}
	q^*  \geq  \hat q(\n)  \geq \frac{w_*(\n)}{\|\n\|} \geq
	\frac{ (\hat q(\n) )^2 }{\sum_{i\in [k]} q_{ii}} \geq 
	\frac{ (\hat q(\n) )^2  }{k   q^* }
	=
	\|\n\|^{o(1)} kq^*.
\end{equation}
Since $kq^*\|\n\| = \|\n\|^{1-\sigma+o(1)}$   we derive from \eqref{eq:mubounds} that
\begin{equation}\label{eq:muq*}
	{w_*(\n) = (q^*\|\n\|)^{1+o(1)}  =\|\n\|^{1-\sigma  +o(1)}} \gg  k^2 \log (q^*\|\n\|).
\end{equation}
Using \eqref{eq:mubounds} and {assumption \eqref{ass_2}}, we  get that 
\[
\hat q(\n) \geq \frac{w_*(\n)}{ \|\n\|} \gg 
\frac{kq^* \hat q(\n) }{ \log \|\n\|},
\]
which implies 
\[
q^* \leq kq^* \ll \log \|\n\|,
\]
{which is needed to apply Theorem    \ref{T:uppercrude} and Lemma \ref{L:upper2}. }  
Also, by the definition of $w_*(\cdot)$, we know that
\[
w_*(\n)   \leq 
\sum_{t\in [k]}w_*(\n^{(t)}).
\
\]
which, together with  \eqref{nt-def},  implies  that 
\[
      \sum_{t\in [k]} (w (\n^{(t)})  - w_*(\n^{t})) \leq k^2 q^*
\]
Since the every term of the sum above is non-negative, 
using  the assumption $k   = \|\n\|^{o(1)}$ and the estimate  $q^*  \ll \log \|\n\|$, we derive that, for any $t\in [k]$, 
\[
	w(\n^{(t)}) \leq w_*(\n^{(t)}) + k^2q^* = w_*(\n^{(t)}) +  \|\n\|^{o(1)}.
\]
Then,  using {the first equality of \eqref{eq:muq*} and our assumption $k   = \|\n\|^{o(1)}$}, for any $t \in [k]\setminus T_{\rm small}$,  we  get
\begin{equation}\label{wnt}
	w(\n^{(t)}) \geq \frac{w_*(\n)}{k^2 \log \|\n\|}  = (q^*\|\n\|)^{1+o(1)} \geq (q^*\|\n^{(t)}\|)^{1+o(1)}.
\end{equation}
This implies that 
\[
		w_*(\n^{(t)}) = (1+o(1)) w(\n^{(t)}) \geq  (q^*\|\n^{(t)}\|)^{1+o(1)}
\]
as required by Lemma \ref{L:upper2}.
 
Now, consider any $t\in T_{\text{small}}$. Define   {$\uvec = (u_1,\ldots, u_k)^T \in \pReals^k$ }
\[
u_i:= n_i^{(t)} +  \frac{w_*(\n)}{k^2 q^* \log \|\n\|}. 
\] 
Using   {the bound  $w_*(\n) \leq q^* \|\n\|$ of Theorem~\ref{l:Qmatrix}(d) and the  assumption $k=\|\n\|^{o(1)}$, we get 
\begin{equation}\label{eq:u-n}
	\begin{aligned}
\|\n^{(t)}\| \leq \|\uvec\|  
&\leq \|\n^{(t)}\|+ \sum_{i\in [k]}\frac{w_*(\n)}{k^2 q^* \log \|\n\|}
\\ &\leq \|\n^{(t)}\|+ \frac{\|\n\|}{k  \log \|\n\|}
\leq  \|\n\|+\|\n\|^{1+o(1)}
=
\|\n\|^{1+o(1)}.
\end{aligned}
\end{equation}}
Using   {the lower bounds of Theorem~\ref{l:Qmatrix}(d) and the inequality $\hat q(\n^{(t)}) \leq q^*$}, we find that 
\[
w(\n^{(t)}) \geq  w_*(\n^{(t)}) \geq  \frac{(\hat q(\n^{(t)}))^2 \|\n^{(t)}\| }{kq^*} \geq \frac{\hat q(\n^{(t)})\|\n^{(t)}\|}{k}.
\]
{This implies $\hat{q}(\n^{(t)})\|\n^{(t)}\| \leq k w(\n^{(t)}) < \frac{w_*(\n)}{k \log \|\n\|} $ because  $w(\n^{(t)}) < \frac{w_*(\n)}{k^2 \log \|\n\|} $ since $t \in T_{\rm small}$. Using the definition of  $u_i$ we get  that}
\begin{equation}\label{hatq-u}
	\begin{aligned}
		\hat{q}(\uvec)\|\uvec\|  &= {\sum_{i\in [k]} q_{ii} u_i = 
			\sum_{i\in [k]} q_{ii}  \left(n_i^{(t)}   +\frac{w_*(\n)}{k^2 q^* \log \|\n\|}\right)} \\ &=
		\hat{q}(\n^{(t)})\|\n^{(t)}\| +  \frac{w_*(\n)}{k^2 q^* \log \|\n\|} \sum_{i\in [k]} q_{ii}  
		\ll   \frac{ w_*(\n)}{k  }.
	\end{aligned}
\end{equation}
{Using \eqref{eq:muq*}, the inequality $\hat{q}(\uvec) \leq q^*$, and  $q^* = \|\n\|^{-\sigma+o(1)}$
	by \eqref{ass_sbm}, observe also that
	\[
	\hat{q}(\uvec)\|\uvec\|   \geq \frac{w_*(\n)}{k^2  q^* \log \|\n\|} \sum_{i\in [k]} q_{ii} \geq  \frac{w_*(\n)}{k^2 \log \|\n\|}
	=q^*\|\n\|^{1+o(1)}.
	\]
	Recalling from \eqref{eq:u-n}  that $\|\uvec\| \leq \|\n\|^{1+o(1)}$ 
		and using $\hat{q} (\uvec) \leq q^*$, we derive that 
 \[ q^*  = \hat{q} (\uvec) \|\n\|^{o(1)}.
\]}

Applying Theorem  \ref{T:uppercrude}
with $n:=\|\n\|$ {and using $\hat{q}(\uvec)\|\uvec\|   \ll   \frac{ w_*(\n)}{k  }$ from \eqref{hatq-u}} we get that 
\[
\chi(\G^{(t)}) ={O \left( \frac{\hat{q}(\uvec) \|\uvec\|}{2 \log (q^*\|\n\|)}\right)} \ll    \frac{ w_*(\n)}{k  \log (q^* \|\n\|)}
\]
with probability at least $1 - {\exp\left(-\|\n\|^{ 2-4\sigma +o(1)}\right)}$.  Applying the union bound, it follows that, with sufficiently high probability,
\begin{equation}\label{chi-small}
	\sum_{t \in T_{\text{small}}} \chi(\G^{(t)})   \ll   \frac{w_*(\n)}{\log (q^*\|\n\|)}.
\end{equation}

Next, we  consider any    $t \in [k] \setminus  T_{\text{small}}$. 
{Since  $w_*(\n^{(t)}) \leq q^*\|\n^{(t)}\| $    by Theorem \ref{l:Qmatrix}(d) and $w_*(\n^{(t)}) = (1+o(1)) w(\n^{(t)}) $, we have  $\|\n^{(t)}\|  \geq (1+o(1))  \frac{w (\n^{(t)})}{q^*}$. Using   \eqref{wnt} and  the bound $k=\|\n\|^{o(1)}$,} we find also that 
{
	\[
	\|\n\| \geq \|\n^{(t)}\|  \geq (1+o(1))  \frac{w (\n^{(t)})}{q^*}  \geq
	(1+o(1)) \frac{w_*(\n)}{k^2 q^* \log \|\n\| } =  \|\n\|^{1+o(1)}.
	\]
	That is, we get $\|\n^{(t)}\| = \|\n\|^{1+o(1)}$.}  Applying Lemma \ref{L:upper2} with any $0<\eps'<\eps$, we derive that, 
\begin{align*}
	\chi(\G^{(t)})  \leq  \left(1 +\eps' \right) \frac{w(\n^{(t)})}{2 \log (q^*\|\n^{(t)}\|)}
	+    { O\left(   \frac{k q^*\|\n^{(t)}\|\hat q(\n^{(t)})  }{\log^2 (q^*\|\n^{(t)}\|)}\right),}
\end{align*}
with probability   at least {$1 -  \exp\left(-\|\n\|^{ 2-4\sigma +o(1)}\right)$. 
	Using  the union bound for all such event over $t\notin T_{\rm small}$,  
we get that, with sufficiently high probability,
\begin{equation}\label{chi-big}
	\begin{aligned}
		\sum_{t \in [k] \setminus T_{\text{small}}} \chi(\G^{(t)})  &\leq   
		\left(1 +\eps' +o(1) \right) \frac{w_*(\n) + k^2 q^*}{ 2\log (q^*\|\n\|)}  
		+  {O\left(    \frac{k q^* \|\n\|\hat q(\n) }{\log^2 (q^*\|\n \|)}\right)}
		\\
		&\leq  \left(1 +\eps' +o(1) \right) \frac{w_*(\n)}{2 \log (q^*\|\n\|)}.
	\end{aligned}
\end{equation}
{For the first inequality in \eqref{chi-big}, we estimated  the $O(\cdot)$ term using
	\[
	\sum_{t \in [k]\setminus T_{\text{small}}}
	\|\n^{(t)}\|\hat q(\n^{(t)})  =  \sum_{t \in [k]\setminus T_{\text{small}}} \sum_{i\in [k]} q_{ii} n_i^{(t)}
	\leq  \sum_{i\in [k]} q_{ii} n_i = \|\n\|\hat q(\n).
	\]}

Finally, substituting the bounds of  \eqref{chi-small} and \eqref{chi-big} into \eqref{chi-inequality}  and  bounding  
\[ 
1 +\eps' +o(1) \leq 1 +\eps,
\] we complete the proof  of Theorem \ref{T:upper}.

\section{Proof of Theorem \ref{l:Qmatrix}}\label{S:Qmatrix}

%

The part (a) follows directly by the definition.

The part (b) is trivial if $w(\xvec)=0$ as we can take $\yvec = \boldsymbol{0}$. Thus, we can assume that $w(\xvec)>0$.
Observe that the function $\y \mapsto \frac{\yvec^T Q  \yvec}{\|\yvec\|}$ (defined to be $0$ at origin) is  
continuous on the compact set  $K_{\xvec}$ defined by
\[
K_{\xvec}:= \{\yvec \in \pReals^k \st \yvec \preceq \xvec\}.
\] Therefore, there exists a maximiser $\yvec^*=(y_1^*,\ldots, y_k^*) \in K_{\xvec}$. 
Furthermore, for each $i\in [k]$, 
   \[
   f_i(y_i):=    \frac{\yvec^T\, Q\,  \yvec}{\|\yvec\|}  
   = q_{ii} (y_i -a) + \frac{b}{y_1 + \ldots + y_n},
   \]
   where 
   $\yvec$ differ from $\yvec^*$ in the $i$-th component only and   $a = a(\yvec^*)$, $b = b(\yvec^*)$. 
   If $b\leq 0$ then $q_{ii}\neq 0$ because $f_i(y_i^*) = w(\xvec)>0$. This implies that $q_{ii}>0$
   so the  
   function  $y_i\mapsto f_i(y_i)$ is strictly increasing. If $b>0$ then 
   the  function $y_i\mapsto f_i(y_i)$  is strictly convex.  In any case,  the maximum lies on the boundary $\{0, x_i\}$. 
    Repeating the same argument for the other components, we get (b).

 Before proceeding, we introduce an additional notation.
 For a positive integer $\ell$ and $\xvec \in \pReals^k$, let
\begin{align}\label{def:rel}
	w_\ell(\x) := &\min_{(\xvec^{(t)})_{t\in [\ell]}} \ \  \sum_{t=1}^\ell w_Q(\xvec^{(t)})  
	\\ &\text{ subject to } \label{def:rel}  \sum_{t=1}^{\ell} \x^{(t)} = \x  \text{ and  }  \x^{(t)}  \in \pReals^k 
 \text{ for all }
  t\in [\ell].\nonumber
 \end{align}
A minimum system of $\ell$ vectors $(\xvec^{(t)})_{t\in [\ell]}$  in \eqref{def:rel} exists since  $\sum_{t=1}^\ell w_Q(\xvec^{(t)})$  is a continuous function on the compact set  of 
all  systems  that   $\sum_{t=1}^{\ell} \x^{(t)} = \x$  and $\x^{(t)}  \in \pReals^k $.
Using definitions \eqref{w_def} and \eqref{def:mu-star}, we find that 
\[
	w(\xvec) = w_1(\xvec) \geq w_{\ell}(\xvec) \geq w_*(\xvec) \quad \text{ and }\quad w_*(\xvec) = \lim_{\ell \rightarrow \infty } w_{\ell}(\xvec).
\]
We will also use the following identity.
 \begin{lemma}\label{l:identity}	For any $\yvec,\zvec \in \pReals^k$,  we have
         	\begin{align*}
         			 \frac{\yvec^T \, Q\,  \yvec}{\|\yvec\|} + 
              \frac{\zvec^T \, Q\,  \zvec}{\|\zvec\|}
              &-        \frac{(\yvec+\zvec)^T \, Q\,  (\yvec+\zvec)}{\|\yvec\|+\|\zvec\|}
              \\ &= \frac{\|\yvec\|\|\zvec\| }{\|\yvec\|+\|\zvec\|} 
              \left(\frac{\yvec}{\|\yvec\|} - \frac{\zvec}{\|\zvec\|}\right)^T \, Q\, 
              \left(\frac{\yvec}{\|\yvec\|} - \frac{\zvec}{\|\zvec\|}\right),
         	\end{align*}
         	where $\frac{\xvec^TQ \xvec}{\|\xvec\|}$
         	is $0$ if $\|\xvec\|=0$ and 	the RHS is taken to be $0$ if    $\|\yvec\|=0$ or $\|\zvec\| = 0$.
\end{lemma}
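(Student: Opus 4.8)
The plan is to prove Lemma~\ref{l:identity} by a direct algebraic computation, which is really just a clever way of bookkeeping the bilinearity of $\yvec \mapsto \yvec^T Q \yvec$. First I would dispose of the degenerate cases: if $\|\yvec\| = 0$ then $\yvec = \boldsymbol{0}$ (all entries are nonnegative), so the left-hand side reduces to $\frac{\zvec^TQ\zvec}{\|\zvec\|} - \frac{\zvec^TQ\zvec}{\|\zvec\|} = 0$, matching the convention that the RHS is $0$; similarly for $\|\zvec\| = 0$. So from now on assume $\|\yvec\|, \|\zvec\| > 0$, and write $a := \|\yvec\|$, $b := \|\zvec\|$ for brevity, noting $\|\yvec+\zvec\| = a+b$.

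The key step is to expand $(\yvec+\zvec)^T Q (\yvec+\zvec) = \yvec^TQ\yvec + 2\yvec^TQ\zvec + \zvec^TQ\zvec$ using symmetry of $Q$, and then write every term over the common denominator $ab(a+b)$. Concretely, the left-hand side becomes
\[
\frac{b(a+b)\,\yvec^TQ\yvec + a(a+b)\,\zvec^TQ\zvec - ab\bigl(\yvec^TQ\yvec + 2\yvec^TQ\zvec + \zvec^TQ\zvec\bigr)}{ab(a+b)}.
\]
Collecting the $\yvec^TQ\yvec$ coefficient gives $b(a+b) - ab = b^2$, the $\zvec^TQ\zvec$ coefficient gives $a(a+b) - ab = a^2$, and the cross term has coefficient $-2ab$. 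So the numerator is $b^2\,\yvec^TQ\yvec - 2ab\,\yvec^TQ\zvec + a^2\,\zvec^TQ\zvec = \bigl(b\yvec - a\zvec\bigr)^T Q \bigl(b\yvec - a\zvec\bigr)$, again by bilinearity and symmetry. Thus the left-hand side equals $\frac{1}{ab(a+b)}\bigl(b\yvec - a\zvec\bigr)^T Q\bigl(b\yvec - a\zvec\bigr)$. Finally I would pull out the scalar: $b\yvec - a\zvec = ab\bigl(\tfrac{\yvec}{a} - \tfrac{\zvec}{b}\bigr)$, so $\bigl(b\yvec-a\zvec\bigr)^TQ\bigl(b\yvec-a\zvec\bigr) = a^2b^2\bigl(\tfrac{\yvec}{a}-\tfrac{\zvec}{b}\bigr)^TQ\bigl(\tfrac{\yvec}{a}-\tfrac{\zvec}{b}\bigr)$, and dividing by $ab(a+b)$ yields exactly $\frac{ab}{a+b}\bigl(\tfrac{\yvec}{\|\yvec\|}-\tfrac{\zvec}{\|\zvec\|}\bigr)^TQ\bigl(\tfrac{\yvec}{\|\yvec\|}-\tfrac{\zvec}{\|\zvec\|}\bigr)$, which is the claimed right-hand side.

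There is no real obstacle here — the statement is an identity, so the only risk is an arithmetic slip in collecting coefficients, which the factorization $b^2 - 2ab + a^2$ pattern makes easy to sanity-check (the numerator is visibly a ``perfect square'' in the vectors $b\yvec$ and $a\zvec$). If I wanted to present it even more cleanly, I could note that both sides are invariant under replacing $(\yvec,\zvec)$ by $(\lambda\yvec,\mu\zvec)$ up to the obvious scaling, reduce to the case $\|\yvec\|=\|\zvec\|=1$, where the identity becomes $\yvec^TQ\yvec + \zvec^TQ\zvec - \tfrac12(\yvec+\zvec)^TQ(\yvec+\zvec) = \tfrac12(\yvec-\zvec)^TQ(\yvec-\zvec)$ — the familiar parallelogram-type law — and then homogenize back; but the direct expansion above is short enough that this reduction is optional.
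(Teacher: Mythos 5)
Your proof is correct and follows essentially the same route as the paper, which simply says to expand both quadratic forms by bilinearity and substitute directly. Your version fills in the arithmetic (clearing denominators, spotting the perfect square $b^2\yvec^TQ\yvec - 2ab\,\yvec^TQ\zvec + a^2\zvec^TQ\zvec = (b\yvec-a\zvec)^TQ(b\yvec-a\zvec)$) and handles the degenerate cases, but it is the same computation.
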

\begin{proof}
This follows by expanding 
   $\left(\frac{\yvec}{\|\yvec\|} - \frac{\zvec}{\|\zvec\|}\right)^T \, Q\, 
              \left(\frac{\yvec}{\|\yvec\|} - \frac{\zvec}{\|\zvec\|}\right)$ and 
              $(\yvec+\zvec)^T \, Q\,  (\yvec+\zvec)$,  using linearity,     and direct substitution.
\end{proof}

We proceed to  part (c). Take any $\yvec \in K_{\xvec}$   such that $\frac{\yvec^TQ \yvec}{\|\yvec\|}  = w(\xvec)$, which exists by part (b). 
 Using Lemma \ref{l:identity}, we find that
  \[
 		w_{\ell}(\xvec)
 		\geq  w_{\ell}(\yvec) 
 		= \sum_{t=1}^\ell w(\yvec^{(t)})  
 		\geq  \sum_{t=1}^\ell \frac{(\yvec^{t})^T Q \yvec^{t}}{\|\yvec^{t}\|} \geq  \frac{\yvec^TQ \yvec}{\|\yvec\|} = w(\xvec).
 \] 
 Taking the limit $\ell \rightarrow \infty$, we prove (c).

 The upper bound 
 \[
 	w_*(\xvec)\leq \hat{q}(\xvec)\|\xvec\|  = \sum_{i\in [k]} x_i q_{ii}
 \] 
 follows by definition \eqref{def:mu-star}  taking the system  of $k$ vectors $(\xvec^{(t)})_{t\in[k]}$, where, for each $t \in [k]$,  the $t$-th component
 of  $\xvec^{(t)}$ equals $x_t$ while other components are 0.
  Also we have  $\hat{q}(\xvec)\|\xvec\| \leq q^* \|\xvec\|$. 
 Next we prove the lower bound for $w_*(\xvec)$ of part (d).   Let  $(\xvec^{(t)})_{t\in \ell }$ be such that 
 $w_{\ell}(\xvec) = \sum_{t\in [\ell]} w(\xvec^{(t)})$ and  $\sum_{t\in [\ell]} \xvec^{(\ell)}= \xvec$.
By the Cauchy-Schwarz inequality, we find that 
 \[
 	w(\xvec^{(t)}) \geq \frac{(\xvec^{(t)})^T Q \xvec^{(t)}}{ \|\xvec^{(t)}\|} 
 	\geq \frac{\sum_{i\in [k]}q_{ii} (x_i^{(t)})^2}{\|\xvec^{(t)}\|}
 	\geq  \frac{\left(\sum_{i\in [k]}q_{ii} x_i^{(t)}\right)^2}{\|\xvec^{(t)}\| \sum_{i\in [k]}q_{ii}}.
 \]
Using  the Cauchy-Schwarz inequality again, we obtain
\[
	\sum_{t\in [\ell]} \|\xvec^{(t)}\| \cdot
	\sum_{t\in [\ell]} 
	\frac{\left(\sum_{i\in [k]}q_{ii} x_i^{(t)}\right)^2}{\|\xvec^{(t)}\| }
	\geq  \left( \sum_{t\in [\ell]} 
	\sum_{i\in [k]}q_{ii} x_i^{(t)}\right)^2 = \left(\sum_{i\in [k]}q_{ii} x_i\right)^2 =
	(\hat{q}(\xvec)^2) \|\xvec\|^2.
\]
Therefore,
\[
		w_{\ell} (\xvec) = \sum_{t\in [\ell]} \xvec^{(\ell)} \geq 
		 \frac{(\hat{q}(\xvec)^2)}{\sum_{i \in [k]}q_{ii} } \|\xvec\|.
\]
 Taking the limit $\ell \rightarrow \infty$ and  observing  $\sum_{i \in [k]}q_{ii} \leq kq^*$, we complete the proof of (d).

 For (e), consider  any two vector systems $\calS \in \calF(\xvec)$ and  $\calS' \in \calF(\xvec')$; see definition \eqref{def:mu-star}. 
 Then,   the union system $\calS \cup \calS'$ belongs to  $\calF(\xvec+\xvec')$. Thus,
 \[
 	\sum_{\yvec  \in \calS} w(\yvec) + 	\sum_{\yvec  \in \calS'} w(\yvec) 
 	=\sum_{\yvec  \in \calS\cup \calS'} w(\yvec) \geq w_*(\xvec+\xvec'). 	 
 \]
 Taking the infimum over $\calS, \calS'$, we  get (e).

 For (f), assume $\ell>k$. Then, we can find real constants $c^{(t)}$, $t=1,\ldots,\ell$, such that 
   \[
      \sum_{t=1}^\ell c^{(t)} \xvec^{(t)} =0.
   \]
   Next, we show that  $\xvec^{(t)}_{\varepsilon}=(1 + \varepsilon c^{(t)}) \xvec^{(t)}$ gives another optimal solution of \eqref{def:rel}. 
   Observe that $ \sum_{t=1}^\ell \xvec^{(t)}_{\varepsilon} = \xvec$.  If 
   $|\varepsilon|$ is sufficiently  small that  $\xvec^{(t)}_{\varepsilon} \in \pReals^k$
   then    
   \[
      f(\varepsilon):=\sum_{t=1}^\ell w(\xvec^{(t)}_{\varepsilon}) = \sum_{t=1}^\ell   (1 + \varepsilon c^{(t)}) w(\xvec^{(t)}),
    \] 
   that is, $f(\varepsilon)$ is a linear function of $\varepsilon$. Since $\varepsilon = 0$ gives the minimum value of  $f(\varepsilon)$,  it should be a constant function.  Then, we can make at least one of 
   $\xvec^{(t)}_{\varepsilon}$ to be trivial while others remain in $\pReals^k$ 
   without changing the value of the target function   $\sum_{t=1}^{\ell} w(\xvec^{(t)}_{\varepsilon}) $.
   This implies 	 $w_\ell(\xvec) = w_{\ell-1}(\xvec)$. 
   Repeating these arguments several times we find that 
    \[
    	    w_\ell(\xvec) = w_{\ell-1}(\xvec)  = \cdots= w_k(\xvec).
    \]
  Taking the limit $\ell \rightarrow \infty$, we get (f).
   
Finally, we proceed to part (g). 
Using part (f), we can find a  system  $(\y^{(t)})_{t\in [k]}$  such that
       \[
	    \sum_{t=1}^k w(\yvec^{(t)}) = w_*(\xvec) \qquad \text{ and } \qquad \sum_{t=1}^k \yvec^{(t)} = \xvec.
       \]
       In particular, by definition of $w_*(\cdot)$, we find  that
       \begin{equation}\label{eqwww}
       	w_*(\yvec^{(t)})  = w(\yvec^{(t)}).
       \end{equation}
         Define $\xvec^{(t)} := \lfloor\yvec^{(t)} \rfloor$. 
        Combining \eqref{eqwww} and  parts (d), (e), we also find  that for all $t \in [k]$
\[
  w(\yvec^{(t)}) = w_{*}(\yvec^{t})  \leq w_*(\xvec^{(t)}) + w_*(\yvec^{(t)} -  \xvec^{(t)})
  \leq  w_*(\xvec^{(t)}) + q^* \|\yvec^{(t)} -  \xvec^{(t)}\| \leq w_*(\xvec^{(t)}) + kq^*.
\]
Thus,  we get that 
\[
	 \sum_{t=1}^k w(\xvec^{(t)}) 
	 \geq \sum_{t=1}^k (w(\yvec^{(t)}) - kq^*) = w_*(\xvec) - k^2q^*. 
\]
Now, we can increase some components of $\xvec^{(t)}$ to ensure that $\sum_{t \in [k]} \xvec^{(t)} = \xvec$.
By part (a), this would only increase the values of $w(\xvec^{(t)}) $. This completes the proof of part (g) 
and Theorem \ref{l:Qmatrix}.


\subsection*{Acknowledgements}
We thank the referee for valuable comments and suggestions that helped us improve the paper.

%
%

%

\end{document}